\pgfplotsset{compat=newest, ticks=none}
\numberwithin{equation}{section}
\numberwithin{figure}{section}
\newtheorem{thm}{Theorem}[section]
\newtheorem{prop}[thm]{Proposition}
\newtheorem{defn}[thm]{Definition}
\newtheorem{lemma}[thm]{Lemma}
\newtheorem{remark}[thm]{Remark}
\DeclareMathOperator*{\argmax}{arg\,max}
\newcommand{\nn}{\mathbb{N}}
\newcommand{\rr}{\mathbb{R}}
\newcommand{\eee}{\mathbb{E}_{x}}
\newcommand{\aaa}{\mathcal{A}}
\newcommand{\mm}{\mathcal{M}}
\newcommand{\hh}{\mathcal{H}}
\newcommand{\vphi}{\varphi}
\newcommand{\xx}{\bar x}
\newcommand{\q}{\theta}
\newcommand{\1}{\bar x_1 }
\newcommand{\2}{\bar x_2 }
\newcommand{\3}{x^*_1 }
\newcommand{\4}{x^*_2 }
\title{Nonzero-sum stochastic differential games with impulse controls: a verification theorem with applications}
\author{
	René Aïd\thanks{Economics Department (LEDa), University Paris Dauphine, and Finance For Energy Market Research Centre (FiME).}
	\qquad Matteo Basei\footnote{Department of Industrial Engineering and Operations Research (IEOR), University of California, Berkeley. E-mail: basei@berkeley.edu} 
	\qquad Giorgia Callegaro\footnote{Department of Mathematics, University of Padova.}\\ 
	Luciano Campi\thanks{Department of Statistics, London School of Economics and Political Science.}
	\qquad Tiziano Vargiolu\footnotemark[3]}
\date{\today}
\begin{document}

\maketitle

\vspace{1cm}

\begin{abstract}
	We consider a general nonzero-sum impulse game with two players. The main mathematical contribution of the paper is a verification theorem which provides, under some regularity conditions, a suitable system of quasi-variational inequalities for the payoffs and the strategies of the two players at some Nash equilibrium. As an application, we study an impulse game with a one-dimensional state variable, following a real-valued scaled Brownian motion, and two players with linear and symmetric running payoffs. We fully characterize a family of Nash equilibria and provide explicit expressions for the corresponding equilibrium strategies and payoffs. We also prove some asymptotic results with respect to the intervention costs. Finally, we consider two further non-symmetric examples where a Nash equilibrium is found numerically.
	\medskip \\\\
	\textbf{Keywords:} stochastic differential game, impulse control, Nash equilibrium, quasi-variational inequality.
	\medskip \\
	{\color{black}\textbf{AMS classification:} 91A15, 91B70, 93E20.}
\end{abstract}

\vspace{1cm}

\section{Introduction}
In this article, we study a general two-player nonzero-sum stochastic differential game with impulse controls. In few words, after setting the general framework, we focus on the notion of Nash equilibrium and identify the corresponding system of quasi-variational inequalities (QVIs). As an application, we consider an impulse game with a one-dimensional state variable and fully solve the system of QVIs, obtaining explicit expressions for the equilibrium payoffs and the corresponding strategies. This paper represents an extension of the results in the Ph.D. thesis \cite{Basei}. 

\vspace{0.4cm}

More specifically, we consider a game where two players can affect a continuous-time stochastic process $X$ by discrete-time interventions which consist in shifting $X$ to a new state. When none of the players intervenes, we assume $X$ to diffuse according to a standard stochastic differential equation. Each intervention corresponds to a cost for the intervening player and a gain for the opponent. The strategy of player $i \in \{1,2\}$ is determined by a couple $\vphi_i =(\mathcal{C}_i,\xi_i)$, where $\mathcal{C}_i$ is a fixed open subset and $\xi_i$ is a continuous function: namely, player $i$ intervenes if and only if the process $X$ exits from $\mathcal{C}_i$ and, when this happens, she shifts the process from state $x$ to state $\xi_i(x)$. {\color{black} If both the players want to intervene, we assume that player 1 has the priority over player 2.} Once the strategies $\vphi_i=(\mathcal{C}_i,\xi_i)$, $i \in \{1,2\}$, and a starting point $x$ have been chosen, a couple of impulse controls $\{ (\tau_{i,k},\delta_{i,k})\}_{k \geq 1}$ is uniquely defined: $\tau_{i,k} $ is the $k$-th intervention time of player $i$ and $\delta_{i,k}$ is the corresponding impulse. Each player aims at maximizing her payoff, defined as follows: for every $x$ belonging to some fixed subset $S \subseteq \rr^d$ and every couple of strategies $(\vphi_1,\vphi_2)$, we set 
\begin{multline}
	\label{zIntro3}
	J^i(x;\vphi_1,\vphi_2) := 
	\eee \bigg[ \int_0^{\tau_S} e^{-\rho_i s} f_i(X_s) ds 
	+ \sum_{ k \geq 1 \, : \, \tau_{i,k} < \tau_S } e^{-\rho_i \tau_{i,k}} \phi_{i} \Big( X_{(\tau_{i,k})^-}, \delta_{i,k} \Big)
	\\
	+ \sum_{ k \geq 1 \, : \, \tau_{j,k} < \tau_S } e^{-\rho_i \tau_{j,k}} \psi_{i} \Big( X_{(\tau_{j,k})^-}, \delta_{j,k} \Big)
	+ e^{-\rho_i \tau_S} h_i (X_{\tau_S})\mathbbm{1}_{ \{ \tau_S < +\infty \} } \bigg],
\end{multline}
where $i,j \in \{1,2\}$, $i \neq j$ and $\tau_S$ is the exit time of $X$ from $S$. The couple $(\vphi_1^*,\vphi_2^*)$ is a Nash equilibrium if
\begin{equation}
J^1(x;\vphi_1^*,\vphi_2^*) \geq J^1(x;\vphi_1,\vphi_2^*), \qquad\text{and}\qquad
J^2(x;\vphi_1^*,\vphi_2^*) \geq J^2(x;\vphi_1^*,\vphi_2),
\end{equation}
for every couple of strategies $\vphi_1,\vphi_2$.

\vspace{0.4cm} 

The first contribution of our paper is the Verification Theorem \ref{thm:verification}, {\color{black} which connects  the game in \eqref{zIntro3} to a suitable system of QVIs. To the best of our knowledge, nonzero-sum games with impulse controls have never been considered from a QVI perspective before.}

Namely, in Theorem \ref{thm:verification} we consider the following system of QVIs:
\begin{equation}
\label{zIntro4}
\begin{aligned}
& V_i = h_i,  && \text{in} \,\,\, \partial S,  \\
& \mm_jV_j - V_j \leq 0, && \text{in} \,\,\, S,   \\
& \hh_iV_i-V_i=0, && \text{in} \,\,\, \{\mm_jV_j - V_j = 0\},  \\
& \max\big\{\aaa V_i -\rho_i V_i + f_i, \mm_iV_i-V_i \}=0, && \text{in} \,\,\, \{\mm_jV_j - V_j < 0\},
\end{aligned}
\end{equation} 
where $i,j \in \{1,2\}$, $i \neq j$, \textcolor{black}{$\aaa$ is the infinitesimal generator of the uncontrolled state process} and $\mm_i, \hh_i$ are suitable intervention operators defined in Section \ref{ssec:QVI}. If two functions $V_i$, with $i \in \{1,2\}$, are a solution to \eqref{zIntro4}, have polynomial growth and satisfy the regularity condition 
\begin{equation}
	\label{zIntro5}
	V_i \in C^2(\mathcal D_j \setminus \partial \mathcal D_i) \cap C^1(\mathcal D_j) \cap C(\overline S),
\end{equation}
where $j \in \{1,2\}$ with $j \neq i$ and $\mathcal D_j=\{\mm_jV_j - V_j < 0\}$, then they coincide with some Nash equilibrium payoffs and a characterization of the corresponding equilibrium strategy is possible. 

\vspace{0.4cm}  

Our second contribution to this stream of research consists in providing examples of solvable impulse games. Using the Verification Theorem \ref{thm:verification} described above and solving the system of QVIs (\ref{zIntro4}), we are able to characterize the Nash equilibria. The main example is described in Sections \ref{sec:thepb}-\ref{ssec:asymp}, where (\ref{zIntro4}) is analytically solved and explicit formulas are provided. In Section \ref{ssec:moreexamples} we consider further families of problems, where (\ref{zIntro4}) is solved numerically. To our knowledge, these are the first examples of solvable nonzero-sum impulse games.

In Sections \ref{sec:thepb} we consider a two-player impulse game with a one-dimensional state variable $X$, modelled by a real-valued (scaled) Brownian motion. The two players have symmetric linear running payoffs and they can intervene on $X$ by shifting it from its current state, say $x$, to some other state $x+\delta$, with $\delta \in \rr$. When a player intervenes, she faces a penalty while her opponent faces a gain, both consisting in a fixed and in a variable part, which is assumed proportional to the size of the impulse. Hence, the players objective functions are
\begin{gather*}
J^1(x;\vphi_1,\vphi_2) \! := \! 
\eee \bigg[ \! \int_0^{\infty} \!\! e^{-\rho s} (X_s \!-\! s_1) ds  
\!-\! \sum_{k \geq 1} e^{-\rho \tau_{1,k}} (c \!+\! \lambda |\delta_{1,k}|)
\!+\! \sum_{k \geq 1} e^{-\rho \tau_{2,k}} (\tilde c \!+\! \tilde \lambda |\delta_{2,k}|)\bigg]\!,
\\
J^2(x;\vphi_1,\vphi_2) \!:=\! 
\eee \bigg[ \! \int_0^{\infty} \!\! e^{-\rho s} (s_2 \!-\! X_s) ds 
\!-\! \sum_{k \geq 1} e^{-\rho \tau_{2,k}} (c \!+\! \lambda |\delta_{2,k}|)
\!+\! \sum_{k \geq 1} e^{-\rho \tau_{1,k}} (\tilde c \!+\! \tilde \lambda |\delta_{1,k}|)\bigg]\!,
\end{gather*}
where $\{(\tau_{i,k},\delta_{i,k})\}_{k \geq 1}$ denotes the impulse control of player $i$ associated to the strategies $\vphi_1,\vphi_2$. Some preliminary heuristics on the QVIs in \eqref{zIntro4} leads us to consider a pair of candidates for the functions $V_i$. Then, a careful application of the verification theorem shows that such candidates actually coincide with the payoff functions of some Nash equilibrium. In particular, a practical characterization of the associated Nash equilibria is possible: player 1 (resp.~player 2) intervenes when the state $X$ is smaller than $\bar x_1$ (resp.~greater than $\bar x_2$) and moves the process to $x^*_1$ (resp.~$x^*_2$), for suitable $\bar x_i, x^*_i$. We provide explicit expressions for the payoff functions and for the parameters $\bar x_i, x^*_i$. Finally, we study the behaviour of the intervention region in some limit cases. In particular, we remark that in the case where $c = \tilde c$ and $\lambda = \tilde \lambda$, the game does not have an admissible Nash equilibrium. 

Finally, in Section \ref{ssec:moreexamples} we consider two further families of examples, with cubic payoffs and with linear and cubic payoffs. We adapt the technique described above and characterize a Nash equilibrium by solving the system of QVIs numerically. 

\vspace{0.4cm}

{\color{black} In applications where controllers act on the underlying process by discrete-time interventions, impulse controls offer a more realistic model with respect to standard continuous-time controls, even though at the cost of a more challenging technical framework. Quite surprisingly, however, the case of nonzero-sum games with impulse controls did not deserve enough attention so far. 

For a comprehensive introduction to impulse controls and the corresponding single-player control problems, we refer the reader to \cite{OksendalSulem07}. Among recent works on single-player impulse control problems, we cite \cite{BelakChristSeif17} for a general verification result,  \cite{AzimBayLab17,AzimFor16} for computational schemes for Hamilton-Jacobi-Bellman QVIs, \cite{AltarReppenSoner17} for the Merton problem with fixed and proportional transaction costs. As for stopping games treated with QVI techniques, we cite the seminal works \cite{Friedman73}, for the zero-sum case, and \cite{BensoussanFriedman77}, for the nonzero-sum case. For recent examples, see \cite{ChenDaiWan13,DeAnFerMor18}, and the references therein. It is also worth mentioning \cite{DeAnFer18}, which provides a link between two-player nonzero-sum games of optimal stopping and two-player nonzero-sum games of singular control. 

Regarding stochastic games with impulse controls, as well as games with mixed impulse and continuous controls, the existing literature almost entirely focuses on the zero-sum case. We refer to \cite{Azim17,Zhang11} for zero-sum games where one player uses continuous controls and the opponent takes impulse controls. For zero-sum games where both players use impulse controls, we cite \cite{Stettner82} for the case with delay and the recent works \cite{Cosso12,ElasriMazid18} for a viscosity approach. In particular, we notice that the system of QVIs proposed in \cite{Cosso12} for zero-sum impulse games can be obtained as a particular case of our framework, see Section \ref{sec:stochImpGame} below. 

Only the two papers \cite{ChangWangWu13,ChangWu15} deal with some nonzero-sum stochastic differential games with impulse controls and finite horizon, using an approach based on backward stochastic differential equations and the maximum principle. Notice that in those two papers the sequence of stopping times along which impulses can be applied is given, hence the players can choose only the size of the impulses. To the best of our knowledge, nonzero-sum impulse games are here considered for the first time in a general form. 
}

%
%

\vspace{0.4cm}

The outline of the paper is the following. In Section \ref{sec:stochImpGame} we rigorously formulate the general impulse game and give the notions of admissible strategies and of Nash equilibrium. Section \ref{sec:VerThm} provides the associated system of QVIs and the corresponding verification theorem. In Section \ref{sec:example} we analytically compute a family of Nash equilibria for a one-dimensional impulse game and provide two further examples with numerical solutions. Finally, Section \ref{sec:conclusion} concludes.

{\color{black} \paragraph{Acknowledgements.} The authors would like to thank J\'er\^ome Renault, Fabien Gensbittel and the anonymous referees for their valuable comments and suggestions. The authors also gratefully acknowledge funding from the SID research project ``New perspectives in stochastic methods for finance and energy markets" and from the Visiting Scientist program of the University of Padova, and from the Finance For Energy Market Research Centre (FiME) in Paris.}

\section{Nonzero-sum stochastic impulse games}
\label{sec:stochImpGame}

In this section we introduce a general class of two-player nonzero-sum stochastic differential games with impulse controls. 

Let ($\Omega$, $\mathcal{F}$, $\{\mathcal{F}_t\}_{t\geq 0}$, $\mathbb{P}$) be a filtered probability space whose filtration satisfies the usual conditions of right-continuity and $\mathbb P$-completeness, and let $\{W_t\}_{t \geq 0}$ be a $k$-dimensional $\{\mathcal{F}_t\}_{t \geq 0}$-adapted Brownian motion. For every $t \geq 0$ and ${\color{black} \zeta \in L^2 (\mathcal F_t)}$, we denote by $Y^{t,\zeta}= \{Y_s^{t,\zeta}\}_{s\ge t}$ a solution to the problem
\begin{equation}
\label{SDE}
d Y^{t,\zeta}_s = b(Y ^{t,\zeta}_s) ds + \sigma(Y ^{t,\zeta}_s) dW_s, \qquad s \geq t, 
\end{equation}
with initial condition $Y^{t,\zeta}_t=\zeta$, where $b: \rr^d \to \rr^d$ and $\sigma: \rr^d \to \rr^{d \times k}$ are given functions. Throughout the whole paper, we assume that the coefficients $b$ and $\sigma$ are globally Lipschitz continuous, i.e.~there exists a constant $K>0$ such that for all $y_1,y_2 \in \rr^d$ we have
\begin{equation*}
|b(y_1) - b(y_2)| + |\sigma(y_1)-\sigma(y_2)| \le K|y_1-y_2|,
\end{equation*}
so that \eqref{SDE} admits a unique strong solution satisfying classical a-priori estimates {\color{black}(see, e.g., \cite[Sect.~5.2]{Oksendal03} among others)}.

We consider two players, that will be indexed by $i \in \{1,2\}$. Let $S$ be an open subset of $\rr^d$ and let $Z_i$ be a fixed \textcolor{black}{non-empty} subset of $\rr^{l_i}$, with $l_i \in \nn$. Equation \eqref{SDE} models the underlying process when none of the players intervenes. If player $i$ intervenes with some impulse $\delta \in Z_i$, the process is shifted from its current state $x$ to a new state $\Gamma^i(x, \delta)$, where $\Gamma^i: S \times Z_i \to S$ is a given continuous function. Each intervention corresponds to a cost for the intervening player and to a gain for the opponent, both depending on the state $x$ and the impulse $\delta$. 

The action of the players is modelled via discrete-time controls: an impulse control for player $i$ is a sequence 
\begin{equation*}
\big\{ (\tau_{i,k},\delta_{i,k}) \big\}_{ k \geq 1},
\end{equation*}
where $\{ \tau_{i,k} \}_{k}$ is a non-decreasing sequence of stopping times (the intervention times) and $\{ \delta_{i,k} \}_{k}$ are $Z_i$-valued $\mathcal{F}_{\tau_{i,k}}$-measurable random variables (the corresponding impulses).

For the sake of tractability, we assume that the behaviour of the players, modelled by impulse controls, is driven by strategies, which are defined as follows.
\begin{defn}\label{defPhi}
A strategy for player $i\in \{1,2\}$ is a pair $\vphi_i =(\mathcal{C}_i,\xi_i)$, where $\mathcal{C}_i$ is a fixed open subset of $S$ and $\xi_i$ is a continuous function from $S$ to $Z_i$. 
\end{defn}

Strategies determine the action of the players in the following sense. Let $x \in S$ be an initial value for the state variable. Once some strategies $\vphi_i=(\mathcal{C}_i,\xi_i)$, $i \in \{1,2\}$, have been chosen, a pair of impulse controls $\{(\tau^{x;\vphi_1,\vphi_2}_{i,k}, \delta^{x;\vphi_1,\vphi_2}_{i,k})\}_{k \geq 1}$ is uniquely defined by the following procedure: 
\begin{equation}
\label{ideastrategies}
\begin{aligned}
&\text{- player $i$ intervenes if and only if the process exits from $\mathcal{C}_i$,} 
\\[-0.1cm]
&\text{\,\, in which case the impulse is given by $\xi_i(y)$, where $y$ is the state;}
\\[-0.1cm]
&\text{- if both players want to act, player 1 has the priority; 
}
\\[-0.1cm]
&\text{- the game ends when the process exits from $S$.}
\end{aligned}
\end{equation}
{\color{black} The second condition in \eqref{ideastrategies} solves the conflict situation when both players would like to shift the process, at the same time and to different states. In such cases, player 1 has the priority. Notice that this does not prevent player 2 from intervening immediately after player 1, if the new state is outside $\mathcal{C}_2$. However, we will forbid sequences of infinitely many simultaneous interventions. We will further comment on that later in this section, see Definition \ref{AdmStrat} and Remark \ref{RemSubsInterv}.}


In the following definition we provide a rigorous formalization of the controls associated to a pair of strategies and the corresponding controlled process, which we denote by $X^{x;\vphi_1,\vphi_2}$. Moreover $O$ denotes a generic subset of $S$ and, finally, we adopt the conventions $\inf \emptyset = \infty$ and $[\infty,\infty[=\emptyset$.
\begin{defn}
\label{controls}
Let $x \in S$ and let $\vphi_i =(\mathcal{C}_i,\xi_i)$ be a strategy for player $i\in \{1,2\}$. For $k \in \{0, \dots, \bar k\}$, where $\bar k = \sup \{k \in \nn \cup \{0\} : \widetilde \tau_k < \alpha^S_k \}$, we define by induction $\widetilde\tau_0=0$, $x_0=x$, $\widetilde X^0 = Y^{ \widetilde\tau_{0}, x_{0}}$, $\alpha_0^S =\infty$, and
\begin{align*}
& \alpha^O_k = \inf \{ s > \widetilde\tau_{k-1} : \widetilde X^{k-1}_s \notin O \},  && \text{{\footnotesize[exit time from $O \subseteq S$]}} \\	
&  \widetilde\tau_k = \alpha^{\mathcal{C}_1}_k \land \alpha^{\mathcal{C}_2}_k,  && \text{{\footnotesize[intervention time]}} \\
& m_k = \mathbbm{1}_{ \{ \alpha^{\mathcal{C}_1}_k \leq \alpha^{\mathcal{C}_2}_k \} } + 2 \, \mathbbm{1}_{ \{ \alpha^{\mathcal{C}_2}_k < \alpha^{\mathcal{C}_1}_k \} }, && \text{{\footnotesize[index of the player interv.~at $ \widetilde \tau_k$]}} \\
&  \widetilde\delta_k = \xi_{m_k} \big( \widetilde X^{k-1}_{ \widetilde \tau_k} \big)\mathbbm{1}_{ \{  \widetilde \tau_k < \infty \} }, && \text{{\footnotesize[impulse]}} \\
& x_k=\Gamma^{m_k} \big( \widetilde X^{k-1}_{ \widetilde\tau_k},  \widetilde\delta_k \big)\mathbbm{1}_{ \{  \widetilde \tau_k < \infty \} }, && \text{{\footnotesize[starting point for the next step]}} \\
& \widetilde X^k= \widetilde X^{k-1} \mathbbm{1}_{[0,  \widetilde \tau_k[} +  Y^{ \widetilde\tau_{k},x_{k}} \mathbbm{1}_{[ \widetilde \tau_k, \infty[}. && \text{{\footnotesize[contr.~process up to the $k$-th interv.]}}
\end{align*} 
Let $\bar k_i$ be the number of interventions by player $i \in \{1,2\}$ before the end of the game, and, in the case where $\bar k_i \neq 0$, let $\eta(i,k)$ be the index of her $k$-th intervention ($1 \leq k \leq \bar k_i$):
\begin{gather*}
\bar k_i = {\displaystyle\sum}_{1 \leq h \leq \bar k} \mathbbm{1}_{\{m_h=i\}},
\qquad\quad
\eta(i,k) = \min \Big\{ l \in \nn : {\displaystyle\sum}_{1 \leq h \leq l} \mathbbm{1}_{\{m_h=i\}} = k \Big\}.
\end{gather*}
{\color{black} Assume now that the times $\{\tilde \tau_k\}_{0\leq k \leq \bar k}$ never accumulate strictly before $\alpha^S_{\bar k}$, that is, we assume that in the event $\{\bar k=+\infty\}$ we have $\lim_{k \to \bar k}\tilde \tau_{k} = \alpha^S_{\bar k}$ (with the convention $\alpha^S_\infty= \sup_k \alpha^S_k$). The controlled process $X^{x;\vphi_1,\vphi_2}$ and the exit time $\tau^{x;\vphi_1,\vphi_2}_S$ are defined by (with the convention $\widetilde X^{\infty} = \lim_{k \to +\infty}\widetilde X^{k}$)}
\begin{equation*}
X^{x;\vphi_1,\vphi_2} := \widetilde X^{\bar k}, 
\qquad\qquad
\tau_S^{x;\vphi_1,\vphi_2} := {\color{black} \alpha^S_{\bar k} =} \inf \{ s \geq 0 : X^{x;\vphi_1,\vphi_2}_s \notin S \}.
\end{equation*}
Finally, the impulse controls $\{(\tau^{x;\vphi_1,\vphi_2}_{i,k}, \delta^{x;\vphi_1,\vphi_2}_{i,k})\}_{k \geq 1}$, with $i \in \{1, 2 \}$, are defined by
\begin{equation}
\label{times}
\tau^{x;\vphi_1,\vphi_2}_{i,k} :=  
\begin{cases}
\widetilde \tau_{\eta(i,k)}, & k \leq \bar k_i,
\\
\tau^{x;\vphi_1,\vphi_2}_S, & k > \bar k_i,
\end{cases} 
\qquad\qquad
\delta^{x;\vphi_1,\vphi_2}_{i,k} :=  
\begin{cases}
\widetilde \delta_{\eta(i,k)}, & k \leq \bar k_i,
\\
0, & k > \bar k_i.
\end{cases}  
\end{equation}
\end{defn} 


When the context is clear and in order to shorten the notations, we will simply write $X$, $\tau_S$, $\tau_{i,k}$, $\delta_{i,k}$. {\color{black} Definition \ref{controls} deserves some comments. First, notice that player 1 has the priority in the case where both players are willing to intervene, i.e., when $\alpha_k^{\mathcal{C}_1} = \alpha_k^{\mathcal{C}_2}$. Also, we underline that $\bar k, \bar k_1,\bar k_2$ are random variables, reflecting the fact that the number of interventions depends on $\omega \in \Omega$. In particular, notice that in the event $\{\bar k= \infty\}$ the convention $\widetilde X^\infty_t = \lim_k \widetilde X^k_t$ is well-posed for $t \in [0,\tau_S[$, since $\widetilde X^k \equiv \widetilde X^{k+1}$ in $[0,\tilde \tau_{k+1}[$ and $\lim_k \tilde \tau_k = \alpha^S_{\bar k} = \tau_S$ by assumption. Furthermore, we remark that if player $i \in \{1,2\}$ intervenes only a finite number of times, i.e., if $\bar k_i$ is finite, then the tail of the control is conventionally set to $(\tau_{i,k}, \delta_{i,k}) = (\tau_S, 0)$ for $k > \bar k_i$. However, notice that the controlled process $X$ does not jump at time $\tau_S$. Indeed, by definition the process only jumps at time $\tilde \tau_k$ with $k \leq \bar k$, in which case we have $\tilde \tau_k< \alpha^S_{k} \leq \alpha^S_{\bar k}=\tau_S$. We will further comment on the choice of such a tail later in this section, after Definition \ref{AdmStrat}. Finally, we remark that Definition \ref{controls} could be easily extended, even if at the cost of some additional technicalities, to the pathological case where the times $\{\tilde \tau_k\}_{0 \leq k \leq \bar k}$ accumulate before the end of the game. However, this is a degenerate situation which is not considered in this paper and, in general, in impulse control literature: see Definition \ref{AdmStrat} and the corresponding comments.}

In the following lemma we give a rigorous formulation to the properties outlined in \eqref{ideastrategies}.

\begin{lemma}
\label{lemmaprocess}
Let $x \in S$ and let $\vphi_i =(\mathcal{C}_i,\xi_i)$ be a strategy for player $i\in\{1,2\}$. 
\begin{itemize}
\item[-] The process $X$ admits the following representation (with the convention $[\infty,\! \infty[=\!\emptyset\!$):
\begin{equation}
\label{representation}
X_s = \sum_{k=0}^{\bar k -1} Y^{ \widetilde \tau_k,x_k}_s \mathbbm{1}_{ [ \widetilde \tau_k, \widetilde \tau_{k+1}[}(s) + Y^{ \widetilde \tau_{\bar k},x_{\bar k}}_s \mathbbm{1}_{ [ \widetilde \tau_{\bar k}, \infty[ }(s).
\end{equation}	
\item[-] The process $X$ is right-continuous. More precisely, $X$ is continuous and satisfies Equation \eqref{SDE} in $[0,\infty[ \, \setminus \, \{\tau_{i,k} : \tau_{i,k} < \tau_S \}$, whereas $X$ is discontinuous at $\{\tau_{i,k} : \tau_{i,k} < \tau_S \}$, where we have  
\begin{equation}
\label{prop1}
X_{\tau_{i,k}} = \Gamma^i \big(X_{(\tau_{i,k})^-}, \delta_{i,k} \big),
\qquad
\delta_{i,k} = \xi_i \big(X_{(\tau_{i,k})^-} \big),
\qquad
X_{(\tau_{i,k})^-} \in \partial \mathcal{C}_i.
\end{equation}
\item[-] The process $X$ never exits from the set $\mathcal{C}_1 \cap \mathcal{C}_2$.
\end{itemize}
\end{lemma}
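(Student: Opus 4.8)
The plan is to extract all three assertions directly from the inductive construction of Definition~\ref{controls}, the representation \eqref{representation} being the key identity from which the other two bullets will follow. First I would establish, by induction on $k$, the pathwise identity
\[
\widetilde X^k_s = \sum_{l=0}^{k-1} Y^{\widetilde\tau_l,x_l}_s\,\mathbbm{1}_{[\widetilde\tau_l,\widetilde\tau_{l+1}[}(s) + Y^{\widetilde\tau_k,x_k}_s\,\mathbbm{1}_{[\widetilde\tau_k,\infty[}(s).
\]
The base case $k=0$ holds since $\widetilde\tau_0=0$ and $\widetilde X^0=Y^{\widetilde\tau_0,x_0}$. For the inductive step I would substitute the hypothesis into the recursion $\widetilde X^k=\widetilde X^{k-1}\mathbbm{1}_{[0,\widetilde\tau_k[}+Y^{\widetilde\tau_k,x_k}\mathbbm{1}_{[\widetilde\tau_k,\infty[}$ and use the monotonicity $\widetilde\tau_l\le\widetilde\tau_{l+1}\le\widetilde\tau_k$ for $l\le k-1$ to collapse the products of indicators, namely $\mathbbm{1}_{[\widetilde\tau_l,\widetilde\tau_{l+1}[}\mathbbm{1}_{[0,\widetilde\tau_k[}=\mathbbm{1}_{[\widetilde\tau_l,\widetilde\tau_{l+1}[}$ and $\mathbbm{1}_{[\widetilde\tau_{k-1},\infty[}\mathbbm{1}_{[0,\widetilde\tau_k[}=\mathbbm{1}_{[\widetilde\tau_{k-1},\widetilde\tau_k[}$; the sum then telescopes to the claimed form. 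Evaluating at $k=\bar k$ and recalling $X=\widetilde X^{\bar k}$ yields \eqref{representation}.

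From \eqref{representation}, on each stochastic interval $[\widetilde\tau_k,\widetilde\tau_{k+1}[$ the process coincides with the continuous strong solution $Y^{\widetilde\tau_k,x_k}$ of \eqref{SDE}; hence $X$ is continuous and solves \eqref{SDE} away from the intervention times, and is right-continuous everywhere. At an intervention time $\widetilde\tau_k<\tau_S$ I would read the left limit off the preceding piece, $X_{(\widetilde\tau_k)^-}=Y^{\widetilde\tau_{k-1},x_{k-1}}_{\widetilde\tau_k}=\widetilde X^{k-1}_{\widetilde\tau_k}$, and the post-jump value $X_{\widetilde\tau_k}=x_k=\Gamma^{m_k}(\widetilde X^{k-1}_{\widetilde\tau_k},\widetilde\delta_k)$; together with $\widetilde\delta_k=\xi_{m_k}(\widetilde X^{k-1}_{\widetilde\tau_k})$ these are exactly the three equalities in \eqref{prop1}, once rewritten in the per-player indexing via $\tau_{i,k}=\widetilde\tau_{\eta(i,k)}$, $\delta_{i,k}=\widetilde\delta_{\eta(i,k)}$ and $m_{\eta(i,k)}=i$. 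The boundary membership $X_{(\widetilde\tau_k)^-}\in\partial A_{m_k}$ I would obtain from the fact that $\widetilde\tau_k=\alpha_k^{A_1}\wedge\alpha_k^{A_2}$ is the first exit time of the continuous process $\widetilde X^{k-1}$ from the open set $A_{m_k}$: continuity, together with the process lying in $A_{m_k}$ just before $\widetilde\tau_k$, gives $\widetilde X^{k-1}_{\widetilde\tau_k}\in\overline{A_{m_k}}$, while minimality of the exit time gives $\widetilde X^{k-1}_{\widetilde\tau_k}\notin A_{m_k}$, so the value lies on $\partial A_{m_k}$.

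For the third bullet I would use that, by construction, $\widetilde\tau_{k+1}=\alpha_{k+1}^{A_1}\wedge\alpha_{k+1}^{A_2}$ is precisely the first time after $\widetilde\tau_k$ at which $Y^{\widetilde\tau_k,x_k}$ leaves $A_1\cap A_2$; consequently $X=Y^{\widetilde\tau_k,x_k}$ remains in $A_1\cap A_2$ on each interval $[\widetilde\tau_k,\widetilde\tau_{k+1}[$, and patching these excursions over $k$ shows that $X$ never leaves $A_1\cap A_2$.

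I expect the difficulties to be bookkeeping rather than analysis. The first delicate point is the boundary identification: it requires both the openness of the $A_i$ and the continuity of the pre-jump diffusion, and on the tie event $\{\alpha_k^{A_1}=\alpha_k^{A_2}\}$ one must invoke the priority convention $m_k=1$, so that the exit occurs through $\partial A_1=\partial A_{m_k}$. The second is the translation between the global intervention count $(\widetilde\tau_k,m_k,\widetilde\delta_k)$ and the per-player labels $(\tau_{i,k},\delta_{i,k})$ through $\eta(i,k)$, together with the tail conventions $(\tau_{i,k},\delta_{i,k})=(\tau_S,0)$ for $k>\bar k_i$, so that \eqref{prop1} is asserted exactly for the interventions with $\tau_{i,k}<\tau_S$. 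Finally, for both the boundary property and the confinement one must keep track of the fact that each post-intervention value $x_k$ again belongs to $A_1\cap A_2$, so that consecutive exit-time characterizations chain correctly and the continuous trajectory genuinely evolves inside $A_1\cap A_2$ between interventions.
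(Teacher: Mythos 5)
Your proposal is correct and takes essentially the same route as the paper: everything is read off the inductive construction in Definition \ref{controls}, and your chain of identities at an intervention time (left limit via continuity of $\widetilde X^{\sigma-1}$ on $[\widetilde\tau_{\sigma-1},\infty[$, the identification $\widetilde X^{\sigma-1}\equiv X$ on $[0,\widetilde\tau_{\sigma}[$, and the relabeling through $\eta(i,k)$) is exactly the paper's argument for the first equality in \eqref{prop1}. The paper declares all the remaining assertions (the representation \eqref{representation}, right-continuity, boundary membership, confinement in $A_1\cap A_2$) ``immediate,'' so your induction on $k$ and the exit-time arguments simply supply details the paper omits.
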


\begin{proof}
We just prove the first property in \eqref{prop1}, the other ones being immediate. Let $i \in \{1,2\}$, $ k \geq 1$ with $\tau_{i,k}< \tau_S$ and set $\sigma = \eta(i,k)$, with $\eta$ as in Definition \ref{controls}. By \eqref{times}, \eqref{representation} and Definition \ref{controls}, we have
\begin{multline*}
X_{\tau_{i,k}} 
= X_{ \widetilde \tau_{\sigma}} 
= Y ^{ \widetilde\tau_\sigma, x_\sigma}_{ \widetilde\tau_\sigma} = x_{\sigma}
= \Gamma^i \big( \widetilde X^{\sigma-1}_{  \widetilde\tau_{\sigma} } ,  \widetilde\delta_\sigma \big) 
\\
= \Gamma^i \big( \widetilde X^{\sigma-1}_{  ( \widetilde\tau_{\sigma})^- } ,  \widetilde\delta_\sigma \big) 
= \Gamma^i \big( X_{  ( \widetilde\tau_{\sigma})^- } ,  \widetilde\delta_\sigma \big)
= \Gamma^i \big( X_{  (\tau_{i,k})^- } , \delta_{i,k} \big),
\end{multline*}
where in the fifth equality we have used the continuity of the process $\widetilde X ^{\sigma-1}$ in $[ \widetilde \tau_{\sigma-1}, \infty[$ and in the next-to-last equality we exploited the fact that $\widetilde X ^{\sigma-1} \equiv X$ in $[0,  \widetilde\tau_{\sigma}[$. 
\end{proof}

Each player aims at maximizing her payoff, consisting of four discounted terms: a running payoff, the costs due to her interventions, the gains due to her opponent's interventions and a terminal payoff. More precisely, for each $i \in \{1,2\}$ we consider $\rho_i > 0$ (the discount rate) and continuous functions $f_i: S \to \rr$ (the running payoffs), $h_i: \partial S \to \rr$ (the terminal payoffs) and $\phi_{i}: S \times Z_i \to \rr$, $\psi_{i}: S \times Z_j \to \rr$ (the interventions' costs and gains), where $j \in \{1,2\}$ with $j \neq i$. The payoff of player $i$ is defined as follows.

\begin{defn}
Let $x \in S$, let $(\vphi_1,\vphi_2)$ be a pair of strategies and let $\tau_S$ be defined as in Definition \ref{controls}. For each $i\in \{1,2\}$, provided that the right-hand side exists and is finite, we set 
\begin{multline}
\label{J}
J^i(x;\vphi_1,\vphi_2) := 
\eee \bigg[ \int_0^{\tau_S} e^{-\rho_i s} f_i(X_s) ds 
+ \sum_{ k \geq 1  \, : \, \tau_{i,k} < \tau_S } e^{-\rho_i \tau_{i,k}} \phi_{i} \Big( X_{(\tau_{i,k})^-}, \delta_{i,k} \Big)
\\
+ \sum_{ k \geq 1 \, : \, \tau_{j,k} < \tau_S } e^{-\rho_i \tau_{j,k}} \psi_{i} \Big( X_{(\tau_{j,k})^-}, \delta_{j,k} \Big)
+ e^{-\rho_i \tau_S} h_i (X_{\tau_S})\mathbbm{1}_{ \{ \tau_S < +\infty \} } \bigg],
\end{multline}
where $j\in \{1,2\}$ with $j \neq i$ and $\{(\tau_{i,k}, \delta_{i,k})\}_{k \geq 1 }$ is the impulse control of player $i$ associated to the strategies $\vphi_1,\vphi_2$. 
\end{defn}

As usual in control theory, the subscript in the expectation denotes conditioning with respect to the available information (hence, it recalls the starting point). Notice that in the summations above we do not consider stopping times which equal $\tau_S$, since the game ends in $\tau_S$. {\color{black} Moreover, by Definition \ref{controls} the process does not jump at time $\tau_S$, so that in \eqref{J} it is legitimate to write $h_i(X_{\tau_S})$ instead of $h_i(X_{(\tau_S)^-})$.}

In order for $J^i$ in \eqref{J} to be well defined, we now introduce the set of admissible strategies in $x \in S$.

\begin{defn}
\label{AdmStrat}
Let $x \in S$ and $\vphi_i = (\mathcal{C}_i,\xi_i)$ be a strategy for player $i  \in \{1,2\}$. We use the notations of Definition \ref{controls} and we say that the pair $(\vphi_1, \vphi_2)$ is $x$-admissible if: 
\begin{enumerate}
\item for $i \in \{1,2\}$, the following random variables are in $L^1(\Omega)$:
\begin{equation}
\label{L1}
\begin{gathered}
\int_0^{\tau_S} e^{-\rho_i s} |f_i|(X_s)ds, 
\qquad
e^{-\rho_i \tau_S}|h_i|(X_{\tau_S}),
\\
\sum_{ \tau_{i,k} < \tau_S } e^{-\rho_i \tau_{i,k}} |\phi_{i}| ( X_{(\tau_{i,k})^-}, \delta_{i,k} ),
\qquad
\sum_{ \tau_{i,k} < \tau_S } e^{-\rho_i \tau_{i,k}} |\psi_{i}| ( X_{(\tau_{i,k})^-}, \delta_{i,k} );
\end{gathered}
\end{equation}
\item for each $p \in \nn$, the random variable $\|X\|_\infty = \sup_{t\geq0}|X_t|$ is in $L^p(\Omega)$:
\begin{equation}
\label{L1process}
\eee[\|X\|^p_\infty] < \infty;
\end{equation}
\item for $i \in \{1,2\}$, we have
\begin{equation}
\label{LimImp}
\lim_{k \rightarrow + \infty} \tau_{i,k} = \tau_S.
\end{equation}
\end{enumerate}
We denote by $\Phi_x$ the set of the $x$-admissible pairs. 
\end{defn}
Thanks to the first condition in Definition \ref{AdmStrat}, the payoffs $J^i(x;\vphi_1,\vphi_2)$ are well-defined. The second condition will be used in the proof of the Verification Theorem \ref{thm:verification}. {\color{black} We observe that \eqref{L1process} is practically reasonable. Indeed, in practical applications of competitive games the mutual continuation region is usually a bounded set (e.g., see the nonzero-sum problems in Section \ref{sec:example}, or the stopping games in \cite{DeAnFerMor18}), so that $|X|$ is bounded and \eqref{L1process} holds. The third condition prevents each player from accumulating the interventions before $\tau_S$. This is a usual assumption in impulse control theory, see, e.g., \cite[Ch.~6]{OksendalSulem07}. In particular, by this condition we do not admit a sequence of infinitely many interventions in a same instant $t < \tau_S$. However, a finite number of simultaneous interventions is allowed at any time, as we detail in Remark \ref{RemSubsInterv}. Finally, notice that the third relation is always true if player $i$ intervenes a finite number of times, since in Definition \ref{controls} the tail of the controls was conventionally set to be $(\tau_{i,k},\delta_{i,k})=(\tau_S,0)$ for $k >\bar k_i$.} 

We conclude the section with the definition of Nash equilibrium and the corresponding payoff functions in our setting. \textcolor{black}{Notice that the functions $V_i$ are not uniquely defined, but depend on the Nash equilibrium considered.}
\begin{defn}
\label{DefNash}
Given $x \in S$, we say that $(\vphi_1^*,\vphi_2^*) \in \Phi_x$ is a Nash equilibrium of the game if
\begin{gather*}
J^1(x;\vphi_1^*,\vphi_2^*) \geq J^1(x;\vphi_1,\vphi_2^*), 
\qquad \forall \vphi_1 \,\, \text{s.t.} \,\, (\vphi_1,\vphi_2^*)\in\Phi_x, \\
J^2(x;\vphi_1^*,\vphi_2^*) \geq J^2(x;\vphi_1^*,\vphi_2), 
\qquad \forall \vphi_2 \,\,\text{s.t.} \,\, (\vphi_1^*,\vphi_2)\in\Phi_x. 
\end{gather*}
Finally, the payoff functions of a Nash equilibrium are defined as follows: if $x \in S$ and a Nash equilibrium $(\vphi_1^*,\vphi_2^*) \in \Phi_x$ exists, we set for $i \in \{1,2\}$
$$
V_i(x) := J^i(x;\vphi_1^*,\vphi_2^*).
$$
\end{defn}

\begin{remark}
\label{RemBdr}
\textcolor{black}{\emph{It is technically convenient to deal with functions $V_i$ defined on a closed set. For this reason, we extend the definitions of $J^i$ and $V_i$ to include the trivial cases $x \in \partial S$. Since $S$ is an open set, in this case the game immediately stops, so that we have $\tau_S=0$ and $J^i(x;\vphi_1,\vphi_2)=h_i(x)$ for each $(\vphi_1,\vphi_2)$, and hence $V_i(x)=h_i(x)$.}}
\end{remark}

\begin{remark}
\label{RemSubsInterv}
\textcolor{black}{\emph{We remark that a finite number of subsequent simultaneous interventions is allowed in our framework. Namely, if one of the players intervenes in $t \geq 0$, there could be another intervention immediately afterwards, that is, still at time $t$. If the new state is outside the continuation region of one of the players, we could have a third intervention in that same instant $t$, and so on. However, this can only happen finitely many times, since it is clear that an infinite sequence, here prevented by the third condition in Definition \ref{AdmStrat}, would correspond to a degenerate game.}}
\end{remark}

\begin{remark}
	\label{RemCommentsHyp} 
	\textcolor{black}{\emph{Our definition of strategies is different from the one used in \cite{Cosso12}, which is an adaptation of the notion of non-anticipative strategies \`a la Elliott-Kalton \cite{ElliottKalton72} (see \cite{FlemSoug89} for the stochastic case). In the latter, the strategy of any player is designed as a response to her competitor's strategy in a non-anticipative way, i.e., it cannot depend on the future strategy. This notion of strategy has been successfully applied to the study of zero-sum stochastic differential games with continuous controls, and further strengthened in \cite{BuckCardRain04} to deal with their nonzero-sum counterparts. It turns out that the notion of non-anticipative strategy is very well suited for the viscosity solutions approach. In the present paper, we have decided to focus on feedback strategies of a particular form mainly for tractability purposes. Indeed, they are a generalisation of threshold type strategies, that revealed very effective in order to obtain explicit equilibria (see, e.g., \cite{DeAnFerMor18}). Moreover, each player's actions are still responses to her competitor's actions even if only through the controlled state variable.}}
	
\end{remark}

\section{A verification theorem}
\label{sec:VerThm}

In this section we define a suitable differential problem for the payoff functions at some Nash equilibrium in nonzero-sum impulse games (see Section \ref{ssec:QVI}) and prove a verification theorem for such games (see Section \ref{ssec:Verif}).

\subsection{The quasi-variational inequality problem}
\label{ssec:QVI}

We now introduce the differential problem that should be satisfied by the payoff functions of some Nash equilibrium in our games: this will be key for stating the verification theorem in the next section.

Let us consider an impulse game as in Section \ref{sec:stochImpGame}. Assume that at some Nash equilibrium the payoff functions $V_1,V_2$ are defined for each $x \in \overline S$ \textcolor{black}{(compare Remark \ref{RemBdr}, while for all the operators introduced in this section it is enough to consider the open set $S$). Also assume that} for $i \in\{1,2\}$ there exists a {\color{black} unique, finite, measurable} function $\delta_i$ from $S$ to $Z_i$ such that 
\begin{equation}
\label{def:deltai}
\{\delta_i(x)\} = \argmax_{\delta \in Z_i} \big\{ V_i(\Gamma^i(x,\delta))+\phi_i(x,\delta)\big\}, 
\end{equation}
for each $x \in S$. We define the four intervention operators by
\begin{equation}
\label{MH}
\begin{gathered}
\mm_i V_i(x) = V_i\big(\Gamma^i(x,\delta_i(x))\big) + \phi_i \big(x,\delta_i(x)\big), 
\\
\hh_i V_i(x) = V_i\big(\Gamma^j(x,\delta_j(x))\big) + \psi_i \big(x,\delta_j(x)\big), 
\end{gathered}
\end{equation}
for $x \in S$ and $i,j \!\in\!\{1,2\}$, with $i \!\neq\! j$. Notice that $\mm_i V_i (\cdot) \!=\! \max_\delta \{ V_i ( \Gamma^i (\cdot,\delta) ) + \phi_i (\cdot,\delta) \}$.

The functions in \eqref{def:deltai} and \eqref{MH} have an immediate and intuitive interpretation. Let $x$ be the current state of the process; if player $i$ (resp.~player $j$) intervenes with impulse $\delta$, the present equilibrium payoff for player $i$ can be written as $V_i(\Gamma^i(x,\delta)) + \phi_i(x,\delta)$ (resp.~$V_i(\Gamma^j(x,\delta)) + \psi_i(x,\delta)$): we have considered  the payoff in the new state and the intervention cost (resp.~gain). Hence, $\delta_i(x)$ in \eqref{def:deltai} is the impulse that player $i$ would use in case she wants to  intervene.

Similarly, $\mm_iV_i(x)$ (resp.~$\hh_iV_i(x)$) represents the payoff for player $i$ when player $i$ (resp.~player $j \neq i$) takes the best immediate action and behaves optimally afterwards. Notice that it is not always optimal to intervene, so $\mm_iV_i(x) \le V_i(x)$, for each $x \in S$, and that player $i$ should intervene (with impulse $\delta_i(x)$) only if $\mm_iV_i(x) = V_i(x)$. This gives a heuristic formulation of Nash equilibria, provided that an explicit expression for $V_i$ is available. The verification theorem will give a rigorous proof to this heuristic argument. We now characterize the payoff functions $V_i$.

Assume $V_1,V_2 \in C^2(\overline S)$ (weaker conditions will be given later) and define 
\begin{equation*}
\aaa V_i = b \cdot \nabla V_i + \frac{1}{2} \mbox{tr} \left( \sigma \sigma^t D^2 V_i\right), 
\end{equation*}
where $b, \sigma$ are as in \eqref{SDE}, $\sigma^t$ denotes the transpose of $\sigma$ and $\nabla V_i, D^2 V_i$ are the gradient and the Hessian matrix of $V_i$, respectively. We are interested in the following quasi-variational inequalities (QVIs) for $V_1,V_2$, where $i,j \in\{1,2\}$ and $i \neq j$:
\begin{subequations}
\label{pbNEW}
\begin{align}
& V_i = h_i,  && \text{in} \,\,\, \partial S, \label{pbNEW-dS} \\
& \mm_jV_j - V_j \leq 0, && \text{in} \,\,\, S, \label{pbNEW-S}  \\
& \hh_iV_i-V_i=0, && \text{in} \,\,\, \{\mm_jV_j - V_j = 0\}, \label{pbNEW-dj}  \\
& \max\big\{\aaa V_i -\rho_i V_i + f_i, \mm_iV_i-V_i \}=0, && \text{in} \,\,\, \{\mm_jV_j - V_j < 0\}  \label{pbNEW-j}  .
\end{align}
\end{subequations}

We now provide some intuition behind conditions \eqref{pbNEW-dS}-\eqref{pbNEW-j}.
First of all, the terminal condition is obvious. Moreover, as we already noticed,  \eqref{pbNEW-S}  is a standard condition in impulse control theory. For (\ref{pbNEW-dj}), if player $j$ intervenes (i.e.,~$\mm_jV_j - V_j =0$), by the definition of Nash equilibrium we expect that player $i$ does not lose anything: this is equivalent to $\hh_iV_i - V_i =0$, otherwise it would be in her interest to deviate. On the contrary, if player $j$ does not intervene (hence $\mm_jV_j - V_j <0$), then the problem for player $i$ becomes a classical one-player impulse control one, $V_i$ satisfies $\max\big\{\aaa V_i -\rho_i V_i + f_i, \mm_iV_i-V_i \}=0$. In short, the latter condition says that $ \aaa V_i-\rho_i V_i + f_i \leq 0$, with equality in case of non-intervention (i.e.,~$\mm_iV_i - V_i <0$). 


\begin{remark}
\label{RemC2}
\emph{We notice that $\aaa V_i$ only appears in $\{\mm_jV_j - V_j < 0\}$, so that  $V_i$ needs to be of class $C^2$ only in such a region (indeed, this assumption can be slightly relaxed, as we will see). This represents a difference to the one-player case, where the value function is usually required to be twice differentiable almost everywhere in $S$, see \cite[Thm.~6.2]{OksendalSulem07}.}
\end{remark}

\paragraph{The zero-sum case.} A verification theorem will be provided in the next section. Here, as a preliminary check, we show that we are indeed generalizing the system of QVIs provided in \cite{Cosso12}, where the zero-sum case is considered. We show that, if we assume
\begin{equation}
\label{zerosum}
\begin{aligned}
&f:=f_1=-f_2, &&\qquad \phi:=\phi_1=-\psi_2, &&\qquad \psi:=\psi_1=-\phi_2, \\
&h:=h_1=-h_2, &&\qquad Z := Z_1 = Z_2, &&\qquad \Gamma:= \Gamma^1 = \Gamma^2, 
\end{aligned}
\end{equation}
so that $V:=V_1=-V_2$, then the problem in \eqref{pbNEW} reduces to the one considered in \cite{Cosso12}. To shorten the equations, we assume $\rho_1=\rho_2=0$ (this makes sense since in \cite{Cosso12} a finite-horizon problem is considered). First of all, we define 
\begin{gather*}
\widetilde \mm V (x) := \sup_{\delta \in Z} \big\{ V(\Gamma(x,\delta))+\phi(x,\delta) \big\},\\
\widetilde \hh V (x) := \inf_{\delta \in Z} \big\{ V(\Gamma(x,\delta))+\psi(x,\delta)\big\},
\end{gather*}
for each $x \in S$. It is easy to see that, under the conditions in \eqref{zerosum}, we have
\begin{equation}
\label{subst}
\mm_1 V_1 = \widetilde \mm V, \qquad \mm_2 V_2 = - \widetilde \hh V, \qquad \hh_1 V_1 = \widetilde \hh V, \qquad \hh_2 V_2 = - \widetilde \mm V.
\end{equation}
By using \eqref{subst}, problem \eqref{pbNEW} becomes
\begin{subequations}
\label{pbTEMP}
\begin{align}
& V = h,  && \text{in} \,\,\, \partial S,  \label{pbTEMP-dS}  \\
& \widetilde \mm V \leq V \leq \widetilde \hh V, && \text{in} \,\,\, S,  \label{pbTEMP-S}  \\
& \aaa V + f \leq 0, && \text{in} \,\,\, \{ V = \widetilde \mm V \},  \label{pbTEMP-M}  \\
& \aaa V + f = 0, && \text{in} \,\,\, \{ \widetilde \mm V < V < \widetilde \hh V\},  \label{pbTEMP-MH} \\
& \aaa V + f \geq 0, && \text{in} \,\,\, \{ V = \widetilde \hh V \} \label{pbTEMP-H}  .
\end{align}
\end{subequations}
Simple computations show that problem \eqref{pbTEMP} is equivalent to
\begin{subequations}
\label{pbCOSSO}
\begin{align}
& V = h,  && \text{in} \,\,\, \partial S,  \label{pbCOSSO-dS}  \\
& \widetilde \mm V -V \leq 0, && \text{in} \,\,\, S,  \label{pbCOSSO-S}  \\
& \min \{ \max \{ \aaa V + f, \widetilde \mm V - V \}, \widetilde \hh V - V \}= 0, && \text{in} \,\,\, S,  \label{pbCOSSO-qvi} 
\end{align}
\end{subequations}
which is exactly the problem studied in \cite{Cosso12}, as anticipated. 

\begin{lemma}
\label{lemmaZEROSUM}
Problems \eqref{pbTEMP} and \eqref{pbCOSSO} are equivalent.
\end{lemma}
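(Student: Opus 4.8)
The plan is to reduce the asserted equivalence to an elementary pointwise statement about three real numbers. Both problems share the terminal condition (\eqref{pbTEMP-dS} is \eqref{pbCOSSO-dS}), and the constraint $\widetilde\mm V - V \le 0$ appears on both sides (it is the left inequality of \eqref{pbTEMP-S} and it is \eqref{pbCOSSO-S}). Hence it suffices to prove that, at each fixed $x \in S$ and under the standing sign condition $\widetilde\mm V(x)-V(x)\le 0$, the single scalar equation \eqref{pbCOSSO-qvi} is equivalent to the conjunction of the right inequality in \eqref{pbTEMP-S} together with the three differential relations \eqref{pbTEMP-M}--\eqref{pbTEMP-H}. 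To lighten the notation I would freeze the point and set $a:=\aaa V(x)+f(x)$, $L:=\widetilde\mm V(x)-V(x)\le 0$ and $U:=\widetilde\hh V(x)-V(x)$, so that \eqref{pbCOSSO-qvi} reads $\min\{\max\{a,L\},U\}=0$.

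The first step is to recast the scalar equation $\min\{\max\{a,L\},U\}=0$ in purely logical terms. Since a minimum of two reals vanishes only if both arguments are nonnegative, the equation immediately yields $U\ge 0$, i.e. it \emph{recovers} $V\le\widetilde\hh V$ (the right inequality of \eqref{pbTEMP-S}, which is not postulated in \eqref{pbCOSSO}). A short case analysis on the sign of $a$, using $L\le 0$, then shows that, given $L\le 0$, the equation $\min\{\max\{a,L\},U\}=0$ is equivalent to
\begin{equation*}
U \ge 0, \qquad \big(U > 0 \;\Rightarrow\; a \le 0\big), \qquad \big(L < 0 \;\Rightarrow\; a \ge 0\big).
\end{equation*}
Matching these implications with \eqref{pbTEMP-M}--\eqref{pbTEMP-H} is then routine. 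For the direction \eqref{pbTEMP}$\Rightarrow$\eqref{pbCOSSO} I would split according to the active obstacle: on $\{L=0\}$ invoke \eqref{pbTEMP-M}, on $\{L<0,\,U>0\}$ invoke \eqref{pbTEMP-MH}, on $\{U=0\}$ invoke \eqref{pbTEMP-H}, each case producing the scalar equation directly. For the converse, the three implications above give exactly $a\le 0$ on $\{L=0,\,U>0\}$, $a=0$ on $\{L<0,\,U>0\}$ and $a\ge 0$ on $\{L<0,\,U=0\}$, which are \eqref{pbTEMP-M}--\eqref{pbTEMP-H}.

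The delicate point — and the one I expect to be the main obstacle — is the degenerate corner $\{L=U=0\}$, i.e. the set where $\widetilde\mm V=V=\widetilde\hh V$ and both obstacles are simultaneously active. There the scalar equation $\min\{\max\{a,0\},0\}=0$ holds for \emph{every} value of $a$, so \eqref{pbCOSSO} imposes no condition on $\aaa V+f$, whereas \eqref{pbTEMP}, forcing the point into both $\{V=\widetilde\mm V\}$ and $\{V=\widetilde\hh V\}$, requires $a\le 0$ and $a\ge 0$, hence $a=0$. To close the equivalence I would therefore argue that this corner is empty in the present setting: the two obstacles are strictly separated, $\widetilde\mm V<\widetilde\hh V$ on $S$, a consequence of the strictly positive (fixed) intervention costs built into $\phi,\psi$ in \eqref{zerosum}, so that the contact sets $\{V=\widetilde\mm V\}$ and $\{V=\widetilde\hh V\}$ are disjoint. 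Under this non-degeneracy the pointwise equivalence is complete, and since every condition in the two problems is pointwise in $x$, together with the common boundary condition this establishes the claim.
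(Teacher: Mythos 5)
Your pointwise reduction is sound, and away from the degenerate contact set it reproduces exactly the paper's own argument: the paper's proof is precisely a two-step case analysis on which obstacle is active, matching your three cases in each direction. I checked your logical characterization of \eqref{pbCOSSO-qvi} under $L\le 0$ (namely $U\ge 0$, $U>0\Rightarrow a\le 0$, $L<0\Rightarrow a\ge 0$) and it is correct; you also make explicit a point the paper leaves silent, namely that \eqref{pbCOSSO-qvi} itself yields $V\le \widetilde\hh V$, which is part of \eqref{pbTEMP-S} but is not postulated in \eqref{pbCOSSO}.

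The genuine gap is your resolution of the corner $\{\widetilde\mm V = V = \widetilde\hh V\}$. You are right that this is the delicate point: there \eqref{pbCOSSO-qvi} holds for \emph{every} value of $a=\aaa V+f$, while \eqref{pbTEMP-M} and \eqref{pbTEMP-H} jointly force $a=0$, so the implication \eqref{pbCOSSO}$\Rightarrow$\eqref{pbTEMP} can only hold if this set is empty (or is handled by some additional argument). But your claim that emptiness follows from ``strictly positive (fixed) intervention costs built into $\phi,\psi$ in \eqref{zerosum}'' is not available: \eqref{zerosum} contains no positivity assumption whatsoever --- it only records the zero-sum identifications $\phi=\phi_1=-\psi_2$, $\psi=\psi_1=-\phi_2$, and in the general setting of Section 2 the cost/gain functions are merely continuous. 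Moreover, even granting a uniform gap $\phi\le\psi-\varepsilon$, the separation $\widetilde\mm V<\widetilde\hh V$ is not a pointwise consequence of it: $\widetilde\mm V$ is a supremum and $\widetilde\hh V$ an infimum over \emph{different} interventions, so their difference is controlled by the oscillation of $V$ over the reachable states, not by the cost gap alone. Any proof of separation must use the constraint $\widetilde\mm V\le V$ at shifted points together with structural properties of $\Gamma$ (this is what makes it work in the concrete example of Section 4, where $\Gamma(x,\delta)=x+\delta$ and the costs have a fixed part); none of that structure is assumed in the abstract zero-sum reduction.

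For fairness of comparison: the paper does not truly resolve this corner either --- its Step 2 restricts to $\widetilde\mm V<\widetilde\hh V$ and declares ``the case $\widetilde\mm V=\widetilde\hh V$ being immediate,'' which it is not, since on that set \eqref{pbCOSSO} imposes no condition on $\aaa V+f$. So your proposal has correctly isolated the one fragile point of the lemma; but as written it replaces the paper's unproved dismissal with a separation claim that is false in the stated generality. A correct completion would either impose obstacle separation (or a no-simultaneous-intervention condition, as in the zero-sum literature, e.g.~\cite{Cosso12}) as an explicit hypothesis, or restate the equivalence off the contact set $\{\widetilde\mm V=\widetilde\hh V\}$.
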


\begin{proof}
\textcolor{black}{Postponed to Appendix \ref{SecApp}.}
\end{proof}

\subsection{Statement and proof}
\label{ssec:Verif}

We provide here the main mathematical contribution of this paper, which is a verification theorem for the problems formalized in Section \ref{sec:stochImpGame}. 

\begin{thm}[\textbf{Verification theorem}] 
\label{thm:verification}
Let all the notations and working assumptions in Section \ref{sec:stochImpGame} be in force and let $V_i$ be a function from $\overline S$ to $\rr$, with $i \in \{1,2\}$. Assume that \eqref{def:deltai} holds and set $\mathcal D_i \!:=\! \{ \mm_i V_i - V_i < 0 \}$, with $\mm_i V_i$ as in \eqref{MH}. Moreover, for $i \!\in\! \{1,2\}$ assume that:
\begin{itemize}
\item[(i)] $V_i$ is a solution to  \eqref{pbNEW-dS}-\eqref{pbNEW-j};
\item[(ii)] $V_i \in C^2(\mathcal D_j \setminus \partial \mathcal D_i) \cap C^1(\mathcal D_j) \cap C(\overline S)$ and it has polynomial growth;
\item[(iii)] $\partial \mathcal D_i $ is a Lipschitz surface (i.e.~it is locally the graph of a Lipschitz function), and $V_i$ has locally bounded derivatives up to the second order in some neighbourhood of $\partial \mathcal D_i$.
\end{itemize}
Finally, let $x \in S$ and assume that $(\vphi_1^*,\vphi_2^*) \in \Phi_x$, with 
\begin{equation*}
\vphi_i^*=(\mathcal D_i, \delta_i),
\end{equation*}
where $i \in \{1,2\}$, the set $\mathcal D_i$ is as above and the function $\delta_i$ is as in \eqref{def:deltai}. Then, 
\begin{equation*}
\mbox{$(\vphi_1^*,\vphi_2^*)$ is a Nash equilibrium and $V_i(x)=J^i(x;\vphi_1^*,\vphi_2^*)$ for $i \in \{1,2\}$.}
\end{equation*} 	
\end{thm}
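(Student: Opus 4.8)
The plan is to establish the two Nash inequalities in Definition \ref{DefNash} by proving, for each player $i$ and each admissible deviation, that $V_i(x) \geq J^i(x;\vphi_1,\vphi_2^*)$ (and symmetrically for player $2$), together with the equality $V_i(x) = J^i(x;\vphi_1^*,\vphi_2^*)$ when both players use the candidate strategies. Since the strategy of the opponent $j$ is frozen at $\vphi_j^*=(\mathcal D_j,\delta_j)$, the key structural observation is that player $j$ intervenes exactly on the set $\{\mm_j V_j - V_j = 0\} = S \setminus \mathcal D_j$, so the controlled process lives in $\overline{\mathcal D_j}$ between player-$i$ interventions and player $j$'s interventions occur only on $\partial \mathcal D_j$. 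This lets me split the analysis according to the two regimes encoded in \eqref{pbNEW-dj} and \eqref{pbNEW-j}.

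\textbf{Core computation.} The central step is an It\^o-type expansion of $s \mapsto e^{-\rho_i s} V_i(X_s)$ along the controlled trajectory for an \emph{arbitrary} admissible strategy $\vphi_i$ of player $i$ against $\vphi_j^*$, between $0$ and $\tau_S$. Because $V_i$ is only $C^2$ on $\mathcal D_j\setminus\partial\mathcal D_i$ (and merely $C^1$ across $\partial\mathcal D_i$, with locally bounded second derivatives near the Lipschitz surface $\partial\mathcal D_i$), I would apply a generalized It\^o/Meyer--It\^o--Tanaka formula: the $C^1$ regularity plus locally bounded second derivatives and the Lipschitz-surface assumption ensure there is no local-time/jump contribution across $\partial\mathcal D_i$, so the usual formula holds with $\aaa V_i$ interpreted in the a.e.\ sense. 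Between consecutive intervention times the drift term contributes $\int e^{-\rho_i s}(\aaa V_i - \rho_i V_i)(X_s)\,ds$, and at each jump time I pick up the difference $V_i(X_{\tau}) - V_i(X_{\tau^-})$. Rearranging gives
\begin{multline*}
V_i(x) = \eee\bigg[\int_0^{\tau_S} e^{-\rho_i s}\big(\rho_i V_i - \aaa V_i\big)(X_s)\,ds
+ e^{-\rho_i \tau_S} V_i\big(X_{(\tau_S)^-}\big)\mathbbm{1}_{\{\tau_S<\infty\}}\\
- \sum_{k:\tau_{i,k}<\tau_S} e^{-\rho_i \tau_{i,k}}\Delta_i(\tau_{i,k})
- \sum_{k:\tau_{j,k}<\tau_S} e^{-\rho_i \tau_{j,k}}\Delta_j(\tau_{j,k})\bigg],
\end{multline*}
where $\Delta_i(\tau) = V_i(X_\tau)-V_i(X_{\tau^-})$ records the value change at an $i$-intervention and $\Delta_j(\tau)$ at a $j$-intervention. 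To make the expectation of the stochastic integral vanish I would use condition \eqref{L1process} together with the polynomial growth of $V_i$ and $\nabla V_i$ (a localization/stopping argument with a uniformly integrable bound).

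\textbf{Comparing against the payoff.} Now I bound each piece using the QVIs. On $\mathcal D_j$, \eqref{pbNEW-j} gives $\aaa V_i - \rho_i V_i + f_i \leq 0$, i.e.\ $(\rho_i V_i - \aaa V_i)(X_s) \geq f_i(X_s)$, turning the running integral into an upper bound for the payoff's running term. For player $i$'s own jumps, the definition of $\delta_i$ in \eqref{def:deltai} and the operator $\mm_i$ give $V_i(X_\tau)+\phi_i(X_{\tau^-},\delta_{i,k}) \leq \mm_i V_i(X_{\tau^-}) \leq V_i(X_{\tau^-})$ by \eqref{pbNEW-S}--\eqref{pbNEW-j}, so $-\Delta_i(\tau_{i,k}) \geq \phi_i(X_{\tau^-},\delta_{i,k})$; this is exactly where a \emph{general} impulse $\delta_{i,k}$ is dominated by the optimizer, producing the inequality, while the candidate $\vphi_i^*$ makes it an equality. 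For player $j$'s jumps, since $X_{\tau_{j,k}^-}\in\partial\mathcal D_j\subseteq\{\mm_jV_j-V_j=0\}$ and player $j$ uses $\delta_j$, I invoke \eqref{pbNEW-dj}: $\hh_i V_i(X_{\tau^-}) = V_i(X_\tau)+\psi_i(X_{\tau^-},\delta_{j,k})$ equals $V_i(X_{\tau^-})$, so $-\Delta_j(\tau_{j,k}) = \psi_i(X_{\tau^-},\delta_{j,k})$ exactly. Finally the terminal boundary term matches $h_i$ by \eqref{pbNEW-dS} and continuity up to $\partial S$. Assembling these bounds yields $V_i(x)\geq J^i(x;\vphi_i,\vphi_j^*)$ for every admissible $\vphi_i$, with equality when $\vphi_i=\vphi_i^*$, which is precisely the Nash property.

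\textbf{Expected main obstacle.} The hardest part is rigorously justifying the generalized It\^o formula across the free boundary $\partial\mathcal D_i$, where $V_i$ is only $C^1$ with second derivatives that are merely locally bounded rather than continuous. I would handle this by mollifying $V_i$ or approximating $\partial\mathcal D_i$ by smooth surfaces, applying the classical It\^o formula to the approximations, and passing to the limit using the locally-bounded-second-derivative hypothesis and the Lipschitz-surface assumption to control the error terms and show the absence of a singular local-time contribution; a secondary technical point is the uniform-integrability/growth control needed to kill the martingale term and to pass the limit $k\to\infty$ in the infinite sum of interventions, for which condition \eqref{L1process} and polynomial growth are the key tools.
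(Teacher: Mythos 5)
Your proposal is correct and follows essentially the same route as the paper's proof: an It\^o expansion of $e^{-\rho_i s}V_i(X_s)$ justified by smoothing/approximation across $\partial\mathcal D_i$ (the paper does this by invoking \cite[Thm.~3.1]{OksendalSulem07}), the bound $\aaa V_i-\rho_i V_i+f_i\le 0$ on $\mathcal D_j$ from \eqref{pbNEW-j}, the inequality $V_i\ge\mm_iV_i$ dominating the player's own jumps with equality under the optimizer $\delta_i$, and the exact identity $V_i=\hh_iV_i$ at the opponent's jumps from \eqref{pbNEW-dj}, combined with \eqref{L1}--\eqref{L1process} and polynomial growth to control the limits. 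The only presentational difference is that the paper localizes first with $\tau_{r,n}=\tau_S\wedge\tau_r\wedge n$ and then passes to the limit by dominated convergence, whereas you write the expansion up to $\tau_S$ and defer the same localization to a closing remark; the substance is identical.
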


\begin{remark}
\emph{Practically, the Nash strategy is characterized as follows: player $i$ intervenes only if the controlled process exits from the region $\{\mm_i V_i - V_i <0\}$ (equivalently, only if $\mm_i V_i(x) = V_i(x)$, where $x$ is the current state). When this happens, her impulse is $\delta_i(x)$.}
\end{remark}

\begin{remark}
{\color{black} \emph{Some technical steps and approximations in the proof of Theorem \ref{thm:verification} may hide the idea on which this result is based: if $\vphi_1$ is a strategy for player 1 such that $(\vphi_1,\vphi_2^*) \in \Phi_x$, by the It\^o formula (here, just heuristically) and the four conditions in the QVI problem \eqref{pbNEW}, we get
\begin{align*}
V_1(x) \,\, \text{``}\!&=\!\!\text{''} \,\,\, \eee \bigg[ \!\!-\!\! \int_{0}^{\tau_S} \!\! e^{-\rho_1 s} (\aaa V_1 \!-\! \rho_1V_1) (X_s) ds - \!\!\!\! \sum_{\tau_{1,k}< \tau_S} \!\!\! e^{-\rho_1 \tau_{1,k}} \Big( V_1 \big( X_{\tau_{1,k}} \big) \!-\! V_1 \big( X_{ ( \tau_{1,k} )^-} \big) \Big)
\\
& \hspace{1.5cm} - \sum_{\tau_{2,k}< \tau_S} e^{-\rho_1 \tau_{2,k}} \Big( V_1 \big( X_{\tau_{2,k}} \big) - V_1 \big( X_{ ( \tau_{2,k} )^-} \big) \Big) + e^{-\rho_1 \tau_S}V_1(X_{\tau_S})\mathbbm{1}_{ \{ \tau_{S} < +\infty \} } \bigg]\\
& \geq \eee \bigg[ \int_{0}^{\tau_{S}} e^{-\rho_1 s} f_1(X_s) ds + \sum_{\tau_{1,k}< \tau_{S}} e^{-\rho_1 \tau_{1,k}} \phi_1 \big( X_{ ( \tau_{1,k} )^-} , \delta_{1,k} \big)
\\
& \hspace{1.5cm} + \sum_{\tau_{2,k}< \tau_{S}} e^{-\rho_1 \tau_{2,k}} \psi_1 \big( X_{ ( \tau_{2,k} )^-} , \delta_{2,k} \big) + e^{-\rho_1 \tau_{S}} h_1(X_{\tau_S})\mathbbm{1}_{ \{ \tau_{S} < +\infty \} } \bigg] = J^1(x;\vphi_1,\vphi_2^*).
\end{align*}
When considering $\vphi=\vphi_1^*$, we get an equality by the definition of $\vphi_1^*$, so that $(\vphi_1^*,\vphi_2^*)$ is Nash equilibrium. However, several approximating sequences have to be considered, since we cannot directly apply the It\^o formula ($V_1$ is not regular enough) and take expectations (we are dealing with potentially infinite sums).}}
\end{remark}

\begin{remark}	
\emph{{\textcolor{black}{Notice that, since $V_i \in C(\overline S)$ for $i \in \{1,2\}$, the sets $\mathcal{D}_i$ are open and the functions $\delta_i$ are measurable by the measurable maximum theorem in \cite[Thm.~18.19]{AlipBorder}.}} We also observe that, for the (candidate) equilibrium strategies in the theorem above, the properties in Lemma \ref{lemmaprocess} imply what follows (the notation is heavy, but it will be crucial to understand the proof of the theorem):
\begin{subequations}
\label{prop2}
{\allowdisplaybreaks
\begin{align}
& (\mm_1 V_1 - V_1) \big( X^{x;\vphi^*_1,\vphi_2}_s \big)  < 0 , 
\label{prop2-M1} \\
& (\mm_2 V_2 - V_2) \big( X^{x;\vphi_1,\vphi^*_2}_s \big)  < 0, 
\label{prop2-M2} \\
& \delta_{1,k}^{x;\vphi^*_1,\vphi_2}  = \delta_1 \bigg( X^{x;\vphi^*_1,\vphi_2}_{ \big(\tau_{1,k}^{x;\vphi^*_1,\vphi_2} \big)^-} \bigg),  \label{prop2-d1} \\
& \delta_{2,k}^{x;\vphi_1,\vphi^*_2}  = \delta_2 \bigg( X^{x;\vphi_1,\vphi^*_2}_{ \big(\tau_{2,k}^{x;\vphi_1,\vphi^*_2} \big)^-} \bigg),  \label{prop2-d2}  \\
& (\mm_1 V_1 - V_1) \bigg( X^{x;\vphi^*_1,\vphi_2}_{ \big( \tau_{1,k}^{x;\vphi^*_1,\vphi_2} \big)^-} \bigg)  =0,   \label{prop2-M1tau} \\
& (\mm_2 V_2 - V_2) \bigg( X^{x;\vphi_1,\vphi^*_2}_{ \big( \tau_{2,k}^{x;\vphi_1,\vphi^*_2} \big)^-} \bigg)  =0,  \label{prop2-M2tau}
\end{align}
}
\end{subequations}
\!\!\!\!\!\! for every strategies $\vphi_1, \vphi_2$ such that $(\vphi_1,\vphi_2^*), (\vphi_1^*,\vphi_2) \in \Phi_x$, every $s \geq 0$ and every $\tau_{i,k}^{x;\vphi_1,\vphi^*_2}$, $\tau_{i,k}^{x;\vphi_1^*,\vphi_2} < \infty$. }
\end{remark}

\begin{proof}
By Definition \ref{DefNash}, we have to prove that
\begin{equation*}
V_i(x)= J^i(x;\vphi_1^*,\vphi_2^*), 
\qquad V_1(x) \geq J^1(x;\vphi_1,\vphi_2^*), 
\qquad	V_2(x) \geq J^2(x;\vphi_1^*,\vphi_2), 
\end{equation*}
for every $i \in \{1,2\}$ and $(\vphi_1, \vphi_2)$ strategies such that $(\vphi_1,\vphi_2^*)\in\Phi_x$ and $(\vphi_1^*,\vphi_2)\in\Phi_x$. We show the results for $V_1$ and $J^1$, the arguments for $V_2$ and $J^2$ being symmetric.
	
\textit{Step 1: $V_1(x) \geq J^1(x;\vphi_1,\vphi_2^*)$.} Let $\vphi_1$ be a strategy for player 1 such that $(\vphi_1,\vphi_2^*) \in \Phi_x$. Here we will use the following shortened notation:
\begin{equation*}
X=X^{x;\vphi_1,\vphi^*_2}, \qquad 
\tau_{i,k}=\tau^{x;\vphi_1,\vphi^*_2}_{i,k}, \qquad 
\delta_{i,k}=\delta^{x;\vphi_1,\vphi^*_2}_{i,k}.
\end{equation*}	
{\color{black} In order to use the It\^o formula, we first need to approximate $V_1$ with regular functions. Since (ii) and (iii) hold, by \cite[proof of Thm.~10.4.1 and App.~D]{Oksendal03} there exists a sequence of functions $\{V_{1,j}\}_{j \in \nn}$ such that:
\begin{itemize}
	\item[(a)] $V_{1,j} \in C^2(\mathcal{D}_2) \cap C^0(\overline S)$, for each $j \in \nn$ (in particular, $\aaa V_{1,j}$ is well-defined in $\mathcal{D}_2$);
	\item[(b)] $V_{1,j} \to V_1$ as $j \to \infty$, uniformly on the compact subsets of $\overline S$;
	\item[(c)] $\{\aaa V_{1,j}\}_{j \in \nn}$ is locally bounded in $\mathcal{D}_2$ and $\aaa V_{1,j} \to \aaa V_1$ as $j \to \infty$, uniformly on the compact subsets of $\mathcal{D}_2 \setminus \partial \mathcal{D}_1$.
\end{itemize}
}
 {\color{black}  For each $r >0$ and $\ell \in \nn$, we set
\begin{equation}
\label{stopping}
\tau_{r,\ell} = \tau_r \land \tau_{1,\ell} \land \tau_{2,\ell},
\end{equation}
where $\tau_r = \inf \{ s > 0 : X_s \notin B(0,r) \} $ is the exit time from the ball with radius $r$. By \eqref{prop2-M2} we have that $X_s \in \mathcal{D}_2$ for each $s > 0$. Since $V_{1,j} \in C^2(\mathcal{D}_2)$ by (a), for each $j \in \nn$ we can apply It\^o's formula to the process $e^{-\rho_1 t}V_{1,j}(X_t)$ over the interval $[0, \tau_{r,\ell}[$, and take the conditional expectations: we get
\begin{multline}
\label{itoAPPROX}		
V_{1,j}(x) \!=\! \eee \bigg[ \!\!-\!\! \int_{0}^{\tau_{r,\ell}} \!\! e^{-\rho_1 s} (\aaa V_{1,j} \!-\! \rho_1 V_{1,j}) (X_s) ds - \!\!\!\! \sum_{\tau_{1,k}< \tau_{r,\ell}} \!\!\! e^{-\rho_1 \tau_{1,k}} \Big( V_{1,j} \big( X_{\tau_{1,k}} \big) \!-\! V_{1,j} \big( X_{ ( \tau_{1,k} )^-} \big) \Big)
\\
- \sum_{\tau_{2,k}< \tau_{r,\ell}} e^{-\rho_1 \tau_{2,k}} \Big( V_{1,j} \big( X_{\tau_{2,k}} \big) - V_{1,j} \big( X_{ ( \tau_{2,k} )^-} \big) \Big) + e^{-\rho_1 \tau_{r,\ell}}V_{1,j}\big(X_{(\tau_{r,\ell})^-}\big) \bigg].
\end{multline}	
Notice that \eqref{itoAPPROX} is well-defined by \eqref{stopping}: indeed, since $\tau_{r,\ell} \leq \tau_r$, $X$ belongs to the compact set $\overline{B(0,r)}$, where the continuous function $V_{1,j}$ is bounded; moreover, the two summations consist in a finite number of terms since $\tau_{r,\ell} \leq \tau_{1,\ell} \land \tau_{2,\ell}$. Also, notice that in \eqref{itoAPPROX} we need to write $V_{1,j}(X_{(\tau_{r,\ell})^-})$, due to the jump at time $\tau_{r,\ell}$. We now pass to the limit in \eqref{itoAPPROX} as $j \to \infty$: since $X$ belongs to the compact set $\overline{B(0,r)}$, by the uniform convergence in (b) and (c) we get}
\begin{multline}
\label{ito}	
V_1(x) \!=\! \eee \bigg[ \!\!-\!\! \int_{0}^{\tau_{r,\ell}} \!\! e^{-\rho_1 s} (\aaa V_1 \!-\! \rho_1V_1) (X_s) ds - \!\!\!\! \sum_{\tau_{1,k}< \tau_{r,\ell}} \!\!\! e^{-\rho_1 \tau_{1,k}} \Big( V_1 \big( X_{\tau_{1,k}} \big) \!-\! V_1 \big( X_{ ( \tau_{1,k} )^-} \big) \Big)
\\
- \sum_{\tau_{2,k}< \tau_{r,\ell}} e^{-\rho_1 \tau_{2,k}} \Big( V_1 \big( X_{\tau_{2,k}} \big) - V_1 \big( X_{ ( \tau_{2,k} )^-} \big) \Big) + e^{-\rho_1 \tau_{r,\ell}}V_1\big(X_{(\tau_{r,\ell})^-}\big) \bigg].
\end{multline}	

We now estimate each term in the right-hand side of \eqref{ito}. As for the first term, since $(\mm_2V_2 - V_2)(X_s) < 0$ by (\ref{prop2-M2}), from (\ref{pbNEW-j}) it follows that 
\begin{equation}
\label{magg1}
(\aaa V_1-\rho_1V_1) (X_s) \leq -f_1(X_s),
\end{equation}
for all $s \in [0,\tau_S]$. For the second term, {\color{black} let us consider any $k \in \nn$ and $\omega \in \Omega$ with $\tau_{1,k}(\omega)<\tau_S(\omega)$.} By (\ref{pbNEW-S}) and the definition of $\mm_1 V_1$ in \eqref{MH},we have 
\begin{align}
\label{magg2}
V_1 \big( X_{ ( \tau_{1,k} )^-} \big) & \geq \mm_1 V_1 \big( X_{ ( \tau_{1,k} )^-} \big)  \nonumber \\
& = \sup_{\delta \in Z_1} \big\{ V_1 \big( \Gamma^1 \big( X_{ ( \tau_{1,k} )^-} , \delta \big) \big) + \phi_1 \big( X_{ ( \tau_{1,k} )^-} , \delta \big) \big\} \nonumber \\
& \geq V_1 \big( \Gamma^1 \big( X_{ ( \tau_{1,k} )^-} , \delta_{1,k} \big) \big) + \phi_1 \big( X_{ ( \tau_{1,k} )^-} , \delta_{1,k} \big) \nonumber \\
& = V_1 \big( X_{ \tau_{1,k} } \big) + \phi_1 \big( X_{ ( \tau_{1,k} )^-} , \delta_{1,k} \big).
\end{align}
As for the third term, {\color{black} let us consider any $k \in \nn$ and $\omega \in \Omega$ with $\tau_{2,k}(\omega)<\tau_S(\omega)$.} By (\ref{prop2}f) we have $(\mm_2 V_2 - V_2) \big( X_{ ( \tau_{2,k} )^-} \big) = 0$; hence, the condition in (\ref{pbNEW-dj}), the definition of $\hh_1 V_1$ in \eqref{MH} and the expression of $\delta_{2,k}$ in (\ref{prop2-d2}) imply that
\begin{align}
\label{magg3}
V_1 \big( X_{ ( \tau_{2,k} )^-} \big) & = \hh_1 V_1 \big( X_{ ( \tau_{2,k} )^-} \big) \nonumber \\ 
& = V_1 \big( \Gamma^2 \big( X_{ ( \tau_{2,k} )^-} , \delta_{2} \big( X_{ ( \tau_{2,k} )^-}) \big) \big) + \psi_1 \big( X_{ ( \tau_{2,k} )^-} , \delta_{2} \big( X_{ ( \tau_{2,k} )^-})\big) \nonumber \\
& = V_1 \big( \Gamma^2 \big( X_{ ( \tau_{2,k} )^-} , \delta_{2,k} \big) \big) + \psi_1 \big( X_{ ( \tau_{2,k} )^-} , \delta_{2,k} \big) \nonumber \\
& = V_1 \big( X_{  \tau_{2,k} } \big) + \psi_1 \big( X_{ ( \tau_{2,k} )^-} , \delta_{2,k} \big). 
\end{align}
By \eqref{ito} and the estimates in \eqref{magg1}-\eqref{magg3} it follows that
\begin{multline*}
V_1(x) \geq \eee \bigg[ \int_{0}^{\tau_{r,\ell}} e^{-\rho_1 s} f_1(X_s) ds + \sum_{\tau_{1,k}< \tau_{r,\ell}} e^{-\rho_1 \tau_{1,k}} \phi_1 \big( X_{ ( \tau_{1,k} )^-} , \delta_{1,k} \big)
\\
+ \sum_{\tau_{2,k}< \tau_{r,\ell}} e^{-\rho_1 \tau_{2,k}} \psi_1 \big( X_{ ( \tau_{2,k} )^-} , \delta_{2,k} \big) + e^{-\rho_1 \tau_{r,\ell}} V_1 \big(X_{(\tau_{r,\ell})^-}\big) \bigg]. 
\end{multline*}

Thanks to the conditions in \eqref{L1},\eqref{L1process} and the polynomial growth of $V_1$ in (ii), we now use the dominated convergence theorem and pass to the limit, first as $r \to \infty$ and then as $\ell \to \infty$, {\color{black} so that the stopping times $\tau_{r,\ell}$ converge to $\tau_S$ by \eqref{LimImp}.} In particular, for the fourth term we notice that {\color{black} by (ii) and \eqref{L1process} we have }
\begin{equation}
\label{domcon}
V_1(X_{(\tau_{r,\ell})^-}) \leq C(1+|X_{(\tau_{r,\ell})^-}|^p) \leq C(1+\|X\|_\infty^p) \in L^1(\Omega),
\end{equation}
for suitable constants $C>0$ and $p \in \nn$; the corresponding limit immediately follows by the continuity of $V_1$ in the case $\tau_S < \infty$ and by \eqref{domcon} itself in the case $\tau_S = \infty$ (as a direct consequence of \eqref{L1process}, we have $\|X\|^p_\infty < \infty$ a.s.). Hence, we finally get
\begin{multline*}
V_1(x) \geq \eee \bigg[ \int_{0}^{\tau_{S}} e^{-\rho_1 s} f_1(X_s) ds + \sum_{\tau_{1,k}< \tau_{S}} e^{-\rho_1 \tau_{1,k}} \phi_1 \big( X_{ ( \tau_{1,k} )^-} , \delta_{1,k} \big)
\\
+ \sum_{\tau_{2,k}< \tau_{S}} e^{-\rho_1 \tau_{2,k}} \psi_1 \big( X_{ ( \tau_{2,k} )^-} , \delta_{2,k} \big) + e^{-\rho_1 \tau_{S}} h_1(X_{\tau_S})\mathbbm{1}_{ \{ \tau_{S} < +\infty \} } \bigg] = J^1(x;\vphi_1,\vphi_2^*).
\end{multline*}
	
\textit{Step 2: $V_1(x) = J^1(x;\vphi_1^*,\vphi_2^*)$.} We argue as in Step 1, but here all the inequalities are equalities by the properties of $\vphi^*_1$.
\end{proof}

As already noticed in Remark \ref{RemC2}, we stress that, unlike one-player impulse control problems, in our verification theorem the candidates are not required to be twice differentiable everywhere. For example, consider the case of player 1: as in the proof we always consider pairs of strategies in the form $(\vphi_1,\vphi_2^*)$, by \eqref{prop2-M2} the controlled process never exits from $\mathcal D_2=\{\mm_2V_2-V_2<0\}$, which is then the only region where the function $V_1$ needs to be (almost everywhere) twice differentiable in order to apply It\^o's formula.

We conclude this section with some considerations on how the theorem above will typically be used. First, when solving the system of QVIs, one deals with functions which are defined only piecewise, as it will be clear in the next section. Then, the regularity assumptions in the verification theorem will give us suitable {\color{black} smooth-pasting} conditions, leading to a system of algebraic equations. If the regularity conditions are too strong, the system has more equations than parameters, making the application of the theorem more difficult. Hence, a crucial point when stating a verification theorem is to set regularity conditions giving a solvable system of equations. In Section \ref{sec:example} we show that, in an example of one-dimensional impulse game, the regularity conditions actually lead to a well-posed algebraic system. 

\section{Examples of solvable one-dimensional impulse games}
\label{sec:example}

In Sections \ref{sec:thepb}-\ref{ssec:asymp} we provide an application of the Verification Theorem \ref{thm:verification} to an impulse game with a one-dimensional state variable modelled by a (scaled) Brownian motion, that can be shifted due to the interventions of two players with linear payoffs. We find a family of Nash equilibria for such a game and provide explicit expressions for the payoffs functions and for the optimal strategies at equilibrium. In Section \ref{ssec:moreexamples} we adapt the solving procedure to two further families of examples, with cubic payoffs and with linear and cubic payoffs, where a solution is found numerically.

\subsection{Formulation of the problem}
\label{sec:thepb}

We consider a one-dimensional real process $X$ and two players with opposite goals: player 1 prefers a high value for the process $X$, whereas the goal of player 2 is to force $X$ to take a low value. More precisely, if $x$ denotes the current value of the process, we assume that the running payoffs of the two players are given by
\begin{equation}
\label{ExPayoff}
f_1(x) = x - s_1, \qquad f_2(x)=s_2-x, \qquad s_1<s_2,
\end{equation}
where $s_1,s_2$ are fixed (possibly negative) constants.  

We assume that each player can intervene and shift $X$ from state $x$ to state $x+\delta$, with $\delta \in \rr$ \textcolor{black}{possibly varying in each intervention}. Moreover, when none of the players intervenes, we assume that $X$ follows a (scaled) Brownian motion. Hence, if $x$ denotes the initial state and $u_i = \{ (\tau_{i,k}, \delta_{i,k}) \}_{k \geq 1}$ collects the intervention times and the corresponding impulses of player $i \in \{1,2\}$, we have
\begin{equation*}
X_s = X_s^{x;u_1,u_2}= x + \sigma W_s + \sum_{k  \, : \, \tau_{1,k} \leq s} \delta_{1,k} + \sum_{k \, : \, \tau_{2,k} \leq s} \delta_{2,k},\qquad s \geq 0,
\end{equation*}
where $W$ is a standard one-dimensional Brownian motion and  $\sigma > 0$ is a fixed parameter. 

As player 2 aims at lowering the level, we can assume that her impulses are negative: $\delta_{2,k}\leq0$, for every $k \in \nn$. Similarly, we assume $\delta_{1,k}\geq0$, for every $k \in \nn$. Affecting the process has a cost for the intervening player and we also assume that there is a corresponding gain for the opponent. In our model both intervention penalties and gains consist in a fixed cost and in a variable cost, assumed to be proportional to the absolute value of the impulse: if $\phi_i$ denotes the intervention penalty for player $i$ and $\psi_j$ denotes the corresponding gain for player $j$, we assume
\begin{equation*}
\phi_i(\delta) = - c - \lambda |\delta|, 
\qquad\qquad
\psi_j(\delta) = \tilde c + \tilde \lambda |\delta|, 
\end{equation*}
where $\delta \in \rr$ is the impulse corresponding to the intervention of player $i$ and $c,\tilde c,\lambda, \tilde \lambda$ are fixed constants such that
\begin{equation*}
c \geq \tilde c \geq 0, 
\qquad\quad
\lambda \geq \tilde \lambda \geq 0,
\qquad\quad
(c,\lambda) \neq (\tilde c, \tilde \lambda).
\end{equation*}
The order conditions have this justification: if we had $c < \tilde c$ or $\lambda < \tilde \lambda$, then, for a suitable impulse $\delta$, the two players could realize a mutual gain by an (almost) instantaneous double intervention; by iterating this infinitely often in a finite interval, the two payoff functions would diverge (this phenomenon is analogous to the one already present in \cite{DeAnFerMor18} for stopping games). The condition $(c,\lambda) \neq (\tilde c, \tilde \lambda)$ will be explained in Remark \ref{RemCond} and Section \ref{ssec:asymp}. Finally, we assume
\begin{equation}
\label{ExCond}
1-\lambda \rho >0,
\end{equation}
where $\rho$ is the discount rate, the same one for both players.

This problem clearly belongs to the class described in Section \ref{sec:stochImpGame}, with
\begin{equation*}
d=1, \qquad S = \rr, \qquad \Gamma^i(x,\delta) = x + \delta, \qquad \rho_i = \rho, \qquad Z_1 =[0,\infty[, \qquad Z_2 =]-\infty,0],
\end{equation*}
and with $f_i, \phi_i, \psi_i$ as above. In short, if $\vphi_i = (\mathcal{C}_i, \xi_i)$ denotes the strategy of player $i$, the objective functions are
\begin{gather*}
J^1(x;\vphi_1,\vphi_2) \! := \! 
\eee \bigg[ \! \int_0^{\infty} \!\! e^{-\rho s} (X_s \!-\! s_1) ds  
\!-\! \sum_{k \geq 1} e^{-\rho \tau_{1,k}} (c \!+\! \lambda |\delta_{1,k}|)
\!+\! \sum_{k \geq 1} e^{-\rho \tau_{2,k}} (\tilde c \!+\! \tilde \lambda |\delta_{2,k}|)\bigg]\!,
\\
J^2(x;\vphi_1,\vphi_2) \!:=\! 
\eee \bigg[ \! \int_0^{\infty} \!\! e^{-\rho s} (s_2 \!-\! X_s) ds 
\!-\! \sum_{k \geq 1} e^{-\rho \tau_{2,k}} (c \!+\! \lambda |\delta_{2,k}|)
\!+\! \sum_{k \geq 1} e^{-\rho \tau_{1,k}} (\tilde c \!+\! \tilde \lambda |\delta_{1,k}|)\bigg]\!,
\end{gather*}
where $\{(\tau_{i,k},\delta_{i,k})\}_{k \geq 1}$ denotes the impulse control of player $i$ associated to the strategies $\vphi_1,\vphi_2$. 

As already outlined, the players have different goals: we are going to investigate if a Nash equilibrium for such a problem exists. Indeed, since $s_1<s_2$ both players gain in the interval $[s_1,s_2]$, it seems that there is room for a Nash configuration. If a Nash equilibrium exists, we denote by $V_1(x), V_2(x)$ the corresponding equilibrium payoffs with initial state $x \in \rr.$

\textcolor{black}{As a possible interpretation of the game just described, let $X$ denote the exchange rate between two currencies. The central banks of the corresponding countries (the players) have different targets for the rate: player 1 prefers a high value for $X$, while the goal of player 2 is yielding a low value. To have a tractable model, we assume that the payoffs of the two players are given, respectively, by $X-s_1$ and $s_2-X$, where $s_1, s_2$ are fixed constants with $s_2>s_1$, which leads to the one-dimensional game defined in this section. This interpretation 
corresponds to a two-player version of the model introduced and studied in, e.g., \cite{Bertola16} and \cite{Cadenillas00}.}

%
%

\subsection{Looking for candidates for the payoff functions at equilibrium}
\label{sec:candidate}

Our goal is to use the Verification Theorem \ref{thm:verification}. We start by looking for a solution to the problem in \eqref{pbNEW}, in order to get a couple of candidates $\tilde V_1, \tilde V_2$ for the payoff functions $V_1,V_2$. 

First, consider the two equations in the QVI problem \eqref{pbNEW}, that is
\begin{align*}
& \hh_i \tilde V_i-\tilde V_i=0, && \text{in} \,\,\, \{\mm_j\tilde V_j - \tilde V_j = 0\}, \\
& \max\big\{\aaa \tilde V_i -\rho \tilde V_i + f_i, \mm_i \tilde V_i- \tilde V_i \}=0, && \text{in} \,\,\, \{\mm_j \tilde V_j - \tilde V_j < 0\},
\end{align*}
for $i,j \in \{1,2\}$, with $i \neq j$; this suggests the following representation for $\tilde V_i$:
\begin{equation}\label{cand-value}
\tilde V_i(x) = 
\begin{cases}
\mm_i \tilde V_i(x), & \text{in $\{\mm_i \tilde V_i - \tilde V_i = 0 \}$,}\\
\vphi_i(x), & \text{in $\{\mm_i \tilde V_i - \tilde V_i < 0, \mm_j\tilde V_j - \tilde V_j < 0 \}$,}\\
\hh_i \tilde V_i(x), & \text{in $\{\mm_j \tilde V_j - \tilde V_j = 0 \}$,}
\end{cases}
\end{equation}
for $i \in \{1,2\}$ and $x \in \rr$, where $\vphi_i$ is a solution to
\begin{equation}
\label{EqEx}
\aaa \vphi_i - \rho \vphi_i + f_i =  \frac{\sigma^2}{2} \vphi_i''- \rho \vphi_i + f_i =0.
\end{equation}
Notice that an explicit formula for $\vphi_i$ is available: for each $x \in \rr$, we have
\begin{equation}
\label{defxi}
\begin{gathered}
\vphi_1(x) = \vphi_1^{A_{11}, A_{12}} (x) = A_{11} e^{\q x} +  A_{12} e^{- \q x} +(x-s_1)/\rho, \\
\vphi_2 (x) = \vphi_2^{A_{21}, A_{22}} (x) = A_{21} e^{\q x} +  A_{22} e^{- \q x} + (s_2-x)/\rho, 
\end{gathered}	
\end{equation}
where $A_{ij}$ are real parameters and the parameter $\theta$ is defined by
\begin{equation*}
\theta = \sqrt{\frac{2\rho}{\sigma^2}}.
\end{equation*}
In order to go on, we need to guess an expression for the intervention regions. As the goal of player 1 is to keep a high value for the process, it is reasonable to assume that her intervention region is in the form $]-\infty, \1]$, for some threshold $\1$. For a similar reason, we expect the intervention region of player 2 to be in the form $[\2,+\infty[$, for some other threshold $\2$. Since $s_1 < s_2$, we guess that $\1 < \2$; as a consequence, the real line is heuristically partitioned into three intervals:
\begin{gather*}
\text{$]-\infty, \1] = \{\mm_1 \tilde V_1 - \tilde V_1 = 0 \}$, where player 1 intervenes,} \\
\text{$]\1,\2[ = \{\mm_1 \tilde V_1 - \tilde V_1 < 0 \} \cap \{\mm_2 \tilde V_2 - \tilde V_2 < 0 \}$, where no one intervenes,} \\
\text{$[\2,+\infty[ = \{\mm_2 \tilde V_2 - \tilde V_2 = 0 \}$, where player 2 intervenes.}
\end{gather*}
By the representation (\ref{cand-value}), this leads to the following expressions for $\tilde V_1$ and $\tilde V_2$:
\begin{equation*}
\tilde V_1(x) = 
\begin{cases}
\mm_1 \tilde V_1(x), & \text{if $x \in \,\, ]-\infty, \1]$,}\\
\vphi_1(x), & \text{if $x \in \,\, ]\1,\2[$,}\\
\hh_1 \tilde V_1 (x), & \text{if $x \in [\2,+\infty[$,}
\end{cases}
\qquad
\tilde V_2(x) = 
\begin{cases}
\hh_2 \tilde V_2(x), & \text{if $x \in \,\, ]-\infty, \1]$,}\\
\vphi_2(x), & \text{if $x \in \,\, ]\1,\2[$,}\\
\mm_2 \tilde V_2 (x), & \text{if $x \in [\2,+\infty[$.}
\end{cases}
\end{equation*}
Let us now investigate the form of $\mm_i \tilde V_i$ and $\hh_i \tilde V_i$. Recall that the impulses of player 1 (resp.~player 2) are positive (resp.~negative); then, we have
\begin{gather*}
\mm_1 \tilde V_1 (x) = \sup_{\delta \geq 0} \{ \tilde V_1(x+\delta) - c - \lambda \delta \} = \sup_{y\geq x} \{ \tilde V_1(y) - c - \lambda (y-x) \}, \\
\mm_2 \tilde V_2 (x) = \sup_{\delta \leq 0} \{ \tilde V_2(x+\delta) - c - \lambda (-\delta) \} = \sup_{y\leq x} \{ \tilde V_2(y) - c - \lambda (x-y) \}.
\end{gather*}
It is reasonable to assume that the maximum point of the function $y \mapsto \tilde V_1(y) - \lambda y$ (resp.~$y \mapsto \tilde V_2(y) + \lambda y$) exists, is unique and belongs to the common continuation region $]\xx_1,\xx_2[$, where we have $\tilde V_1  =\vphi_1$ (resp.~$\tilde V_2  =\vphi_2$). As a consequence, if we denote by $x^*_i$, $i \in \{1,2\}$, such maximum points, that is
\begin{align*}
&\vphi_1(\3) = \max_{y \in ]\xx_1,\xx_2[} \{ \vphi_1(y) - \lambda y \}, \qquad \text{i.e.} \qquad \vphi_1'(\3)=\lambda, \,\,\, \vphi_1''(\3)\leq 0, \,\,\, \xx_1 \!<\! x^*_1 \!<\! \xx_2,
\\
&\vphi_2(\4) = \max_{y \in ]\xx_1,\xx_2[} \{ \vphi_2(y)  + \lambda y \}, \qquad \text{i.e.} \qquad \vphi_2'(\4)=-\lambda, \,\,\, \vphi_2''(\4)\leq 0, \,\,\, \xx_1 \!<\! x^*_2 \!<\! \xx_2,
\end{align*}
the functions $\mm_i \tilde V_i$, $\hh_i \tilde V_i$ have the following (heuristic, at the moment) expression: 
\begin{align*}
\mm_1 \tilde V_1 (x) &= \vphi_1(\3) - c - \lambda (\3-x),\quad &
\mm_2 \tilde V_2 (x) &= \vphi_2(\4) - c - \lambda (x-\4),
\\
\hh_1 \tilde V_1 (x) &= \vphi_1(x^*_2) + \tilde c + \tilde \lambda(x-x^*_2),\quad &
\hh_2 \tilde V_2 (x) &= \vphi_2(x^*_1) + \tilde c + \tilde \lambda(x^*_1-x). 
\end{align*}
As for the parameters involved in $\tilde V_1, \tilde V_2$, they must be chosen so as to satisfy the regularity assumptions in the verification theorem, which here write 
\begin{gather*}
\tilde V_1 \in C^2\big(\,]-\infty, \bar x_1[ \,\, \cup \,\, ]\bar x_1, \bar x_2[\,\big) \cap C^1\big(\,]-\infty, \bar x_2[\,\big) \cap C\big(\rr\big),
\\
\tilde V_2 \in C^2\big(\, ]\bar x_1, \bar x_2[ \,\, \cup \,\, ]\bar x_2, +\infty[ \, \big) \cap C^1\big(\, ]\bar x_1, +\infty[ \, \big) \cap C\big(\rr\big). 
\end{gather*}
Since $\tilde V_1$ and $\tilde V_2$ are, by definition, smooth in $]-\infty, \bar x_1[ \,\, \cup \,\, ]\bar x_1, \bar x_2[ \,\, \cup \,\, ]\bar x_2,+\infty[$, we have to set the parameters so that $\tilde V_i$ is continuous at $\1,\2$ and differentiable at $\bar x_i$ (we underline that $\tilde V_1$ and $\tilde V_2$ might not be differentiable at, respectively, $\bar x_2$ and $\bar x_1$). 

Finally, to summarize all the previous arguments, our candidates for the payoff functions of some Nash equilibrium are defined as follows.
\begin{defn}
	\label{def:candidate}
	For every $x \in \rr$, we set
	\begin{equation}
	\label{Vpratic}
	\begin{gathered}
	\tilde V_1(x) = 
	\begin{cases}
	\vphi_1(\3) - c - \lambda (\3-x), & \text{if $x \in \,\, ]-\infty, \1]$,}\\
	\vphi_1(x), & \text{if $x \in \,\,]\1,\2[$,}\\
	\vphi_1(\4) + \tilde c + \tilde \lambda(x-\4), & \text{if $x \in [\2,+\infty[$,}
	\end{cases}
	\\
	\tilde V_2(x) = 
	\begin{cases}
	\vphi_2(\3) + \tilde c + \tilde \lambda(\3-x), & \text{if $x \in \,\,]-\infty, \1]$,}\\
	\vphi_2(x), & \text{if $x \in \,\,]\1,\2[$,}\\
	\vphi_2(\4) - c - \lambda (x-\4), & \text{if $x \in [\2,+\infty[$,}
	\end{cases}
	\end{gathered}		
	\end{equation}
	where $\vphi_1 = \vphi_1^{A_{11}, A_{12}}$, $\vphi_2 = \vphi_2^{A_{21}, A_{22}}$ and the eight parameters involved
	\begin{equation*}
	(A_{11}, A_{12}, A_{21}, A_{22}, \1, \2, \3, \4)
	\end{equation*}
	satisfy the order conditions
	\begin{equation}
	\label{OrdCondTWOPL}
	\1 < \3 < \2, \qquad \1 < \4 < \2, 
	\end{equation}
	and the following conditions:
	\begin{subnumcases}{\label{syst1}}
	\vphi_1'(\3)=\lambda \quad \text{and} \quad \vphi_1''(\3)\leq0 ,  & \textit{(optimality of $\3$)} \label{syst1-a} \\ 
	\vphi_1'(\1)=\lambda,  & \textit{($C^1$-pasting in $\1$)} \label{syst1-b}\\ 
	\vphi_1(\1)= \vphi_1(\3) - c - \lambda (\3-\1),  & \textit{($C^0$-pasting in $\1$)} \label{syst1-c} \\ 
	\vphi_1(\2)= \vphi_1(\4) + \tilde c + \tilde \lambda(\2-\4),  & \textit{($C^0$-pasting in $\2$)} \label{syst1-d}
	\end{subnumcases} 
	\begin{subnumcases}{\label{syst2}}
	\vphi_2'(\4)=-\lambda \quad \text{and} \quad \vphi_2''(\4)\leq0 ,  & \textit{(optimality of $\4$)} \label{syst2-a} \\ 
	\vphi_2'(\2)=-\lambda,  & \textit{($C^1$-pasting in $\2$)} \label{syst2-b} \\ 
	\vphi_2(\1)= \vphi_2(\3) + \tilde c + \tilde \lambda (\3-\1),  & \textit{($C^0$-pasting in $\1$)} \label{syst2-c} \\ 
	\vphi_2(\2)= \vphi_2(\4) - c - \lambda(\2-\4).  & \textit{($C^0$-pasting in $\2$)} \label{syst2-d}
	\end{subnumcases}
\end{defn}

In order to have a well-posed definition, we need to show that the conditions in \eqref{OrdCondTWOPL}-\eqref{syst1}-\eqref{syst2} actually admit a solution. Indeed, we can here prove that there exists a family of solutions to \eqref{OrdCondTWOPL}-\eqref{syst1}-\eqref{syst2}, i.e., that we have infinitely many (candidates for the) Nash equilibria and corresponding payoffs.

\begin{prop}
	\label{PropEx}
	There exists a family of 8-uples $(A_{11}, A_{12}, A_{21}, A_{22}, \1, \2, \3, \4)$ satisfying the conditions in \eqref{OrdCondTWOPL}-\eqref{syst1}-\eqref{syst2}. Moreover, for each of such 8-uples, there exists $\tilde x \in ]x^*_2, \bar x_2[$ such that $\vphi_2''<0$ in $]\bar x_1, \tilde x[$ and  $\vphi_2''>0$ in $]\tilde x, \bar x_2[$.
\end{prop}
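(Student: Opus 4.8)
\medskip
The plan is to use the reflection symmetry of the game to collapse the eight conditions into the four conditions of player~1, and then to solve those by two scalar monotonicity arguments. Since $f_1(s_1+s_2-x)=f_2(x)$ and the data $c,\lambda,\tilde c,\tilde\lambda,\Gamma^i$ are common to the two players, the problem is invariant under the reflection $x\mapsto s_1+s_2-x$ combined with exchanging the players, so I look for a solution of the symmetric form $\vphi_2(x)=\vphi_1(s_1+s_2-x)$, i.e.
\[ \bar x_2=s_1+s_2-\bar x_1,\quad x^*_2=s_1+s_2-x^*_1,\quad A_{21}=A_{12}e^{-\theta(s_1+s_2)},\quad A_{22}=A_{11}e^{\theta(s_1+s_2)}. \]
A direct substitution shows that under these relations the four equations \eqref{syst2} become exactly those of \eqref{syst1} (with \eqref{syst2-a},\eqref{syst2-b} matching \eqref{syst1-a},\eqref{syst1-b} and \eqref{syst2-c},\eqref{syst2-d} matching \eqref{syst1-d},\eqref{syst1-c}); hence it suffices to exhibit a unique $4$-uple $(A_{11},A_{12},\bar x_1,x^*_1)$ solving \eqref{syst1}, the remaining unknowns being read off from the symmetry relations.

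Setting $g_1(x):=\vphi_1(x)-\lambda x$ and $\mu:=1/\rho-\lambda>0$ (positive by \eqref{ExCond}), conditions \eqref{syst1-a}--\eqref{syst1-b} state that $\bar x_1<x^*_1$ are the two zeros of $g_1'=\vphi_1'-\lambda$. Solving the corresponding linear $2\times2$ system in $(A_{11},A_{12})$ yields, with $P:=e^{\theta\bar x_1}$ and $Q:=e^{\theta x^*_1}$,
\[ A_{11}=-\frac{\mu}{\theta(P+Q)}<0,\qquad A_{12}=\frac{\mu PQ}{\theta(P+Q)}>0 . \]
These signs make $\vphi_1''$ positive then negative, so $x^*_1$ lies in the concave branch and $\vphi_1''(x^*_1)<0$, which secures the inequality in \eqref{syst1-a}. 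Inserting the coefficients into \eqref{syst1-c}, all dependence on position cancels and the condition reduces to $\tfrac{\mu}{\theta}F(r)=c$, where $r:=Q/P=e^{\theta(x^*_1-\bar x_1)}$ and $F(r):=\ln r-\tfrac{2(r-1)}{r+1}$. As $F(1)=0$ and $F'(r)=(r-1)^2/\big(r(r+1)^2\big)>0$ for $r>1$, the map $F$ is an increasing bijection from $(1,\infty)$ onto $(0,\infty)$, so (for $c>0$) the gap $\Delta:=x^*_1-\bar x_1=\tfrac1\theta\ln r$ is uniquely determined.

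It remains to impose \eqref{syst1-d}. After the symmetry substitution $\bar x_2=s_1+s_2-\bar x_1$, $x^*_2=s_1+s_2-x^*_1$ it becomes a single scalar equation in the inter-threshold distance $\xi:=\bar x_2-\bar x_1=s_1+s_2-2\bar x_1$, which I compute to be
\[ \kappa(r-1)\,G(\xi)=\tilde\mu\Delta-\tilde c,\qquad \kappa:=\frac{\mu}{\theta(1+r)},\quad \tilde\mu:=\frac1\rho-\tilde\lambda,\quad G(\xi):=2\cosh\!\big(\theta(\xi-\Delta)\big). \]
Here $G$ is strictly convex with minimum $G(\Delta)=2$, and the order conditions \eqref{OrdCondTWOPL} are all equivalent to $\xi>\Delta$; on $(\Delta,\infty)$ the left-hand side increases strictly from $2\kappa(r-1)=\mu\Delta-c$ to $+\infty$. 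Hence an admissible root exists and is unique iff $\tilde\mu\Delta-\tilde c>\mu\Delta-c$, i.e.\ $\Delta(\lambda-\tilde\lambda)>\tilde c-c$; since $\lambda\ge\tilde\lambda$ and $c\ge\tilde c$ this is precisely the standing hypothesis $(c,\lambda)\neq(\tilde c,\tilde\lambda)$ (its failure corresponding to the degenerate case with no admissible equilibrium recorded in the text). This fixes $\xi$, hence $\bar x_1=(s_1+s_2-\xi)/2$, and thereby the whole $8$-uple. The ``moreover'' claim then follows from $A_{21}>0>A_{22}$: $\vphi_2''$ vanishes only at $\tilde x$ with $e^{2\theta\tilde x}=-A_{22}/A_{21}$, which gives $\tilde x=\bar x_2-\Delta/2\in\,]x^*_2,\bar x_2[$, with $\vphi_2''<0$ before $\tilde x$ and $\vphi_2''>0$ after.

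The step I expect to be most delicate is this last scalar equation: one must establish the convexity and range behaviour of $G$ and match the solvability threshold $\mu\Delta-c$ exactly to the hypothesis $(c,\lambda)\neq(\tilde c,\tilde\lambda)$, whereas the monotonicity of $F$ is routine. I also stress that the bare pasting equations \eqref{syst1}--\eqref{syst2} are invariant under a common translation of all four thresholds (the constants $s_1,s_2$ cancel in every difference and derivative), so it is precisely the symmetry reduction $\vphi_2=\vphi_1(s_1+s_2-\,\cdot\,)$ that fixes the common centre $(s_1+s_2)/2$ and renders the $8$-uple unique rather than determined only up to translation.
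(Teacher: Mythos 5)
Your existence argument is correct and follows essentially the paper's route: impose the reflection ansatz around $\tilde s=(s_1+s_2)/2$, observe that \eqref{syst2} then collapses onto \eqref{syst1} (indeed \eqref{syst2-c}, \eqref{syst2-d} become \eqref{syst1-d}, \eqref{syst1-c}), and solve the remaining conditions through scalar equations with strictly monotone left-hand sides. Your parametrization is in fact cleaner than the paper's: Vieta for $(A_{11},A_{12})$ from \eqref{syst1-a}--\eqref{syst1-b}, then the gap $\Delta=x^*_1-\bar x_1$ determined by \eqref{syst1-c} alone through the increasing function $F(r)=\ln r-2(r-1)/(r+1)$, and only then the width from \eqref{syst1-d}; the paper instead solves the coupled reduced system \eqref{news}, with its own monotone function \eqref{DefF} followed by a quadratic in $(A_1,A_2)$. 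I checked the key identities ($2\kappa(r-1)=\mu\Delta-c$; the reduction of \eqref{syst1-d} to the $\cosh$ equation; the equivalence of your solvability condition $(\lambda-\tilde\lambda)\Delta>\tilde c-c$ with the paper's requirement $N>0$, both failing exactly when $(c,\lambda)=(\tilde c,\tilde\lambda)$), and the ``moreover'' part (inflection of $\vphi_2$ at $(\bar x_2+x^*_2)/2$, from the signs $A_{21}>0>A_{22}$) is also correct.

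The gap is uniqueness, and your closing paragraph exposes it rather than closing it. The symmetry relations $\bar x_2=s_1+s_2-\bar x_1$, etc., are an ansatz you impose; they are not consequences of \eqref{OrdCondTWOPL}-\eqref{syst1}-\eqref{syst2}. What you actually prove is that there is exactly one solution \emph{of symmetric form}. Your translation-invariance observation is correct: replacing $(\bar x_i,x^*_i,A_{i1},A_{i2})$ by $(\bar x_i+t,\,x^*_i+t,\,A_{i1}e^{-\theta t},\,A_{i2}e^{\theta t})$ preserves every equation in \eqref{syst1}-\eqref{syst2} and the order conditions, since $s_1,s_2$ enter $\vphi_1,\vphi_2$ in \eqref{defxi} only through additive constants that cancel in all differences and derivatives. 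But this observation works against your conclusion: applied to the symmetric solution you constructed, it generates a one-parameter family of non-symmetric 8-uples satisfying the very same conditions, so no reduction you choose to perform can ``render the 8-uple unique''; a proof of uniqueness as stated would have to show that every solution is symmetric about $\tilde s$, which your own remark indicates cannot be done. Note that the paper treats this point quite differently: its proof establishes existence only and defers uniqueness to Proposition \ref{prop:checkverification}, on the grounds that distinct solutions would yield distinct expressions for the value functions --- an argument that presupposes the equilibrium payoffs are uniquely determined, which is precisely what the translation family puts in question. So existence and the ``moreover'' claim stand, but the uniqueness assertion is established by your argument only within the symmetric class, and your final paragraph, as written, is internally inconsistent with the uniqueness it claims.
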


\begin{proof}
	First, we reduce the number of equations. Notice that, for any $\tilde s \in \rr$ and $x \in \rr$, the running costs $(f_1,f_2)$ satisfy
	\begin{equation*}
	f_1(x)=f_2(2\tilde s - x)+2\tilde s -(s_1 + s_2).
	\end{equation*}
	We guess a corresponding relation for $(\vphi_1, \vphi_2)$, that is
	\begin{equation*}
	\vphi_1(x)=\vphi_2(2\tilde s - x)+\frac{2\tilde s -(s_1 + s_2)}{\rho},
	\end{equation*}
	and we look for couples $(\bar x_1, \bar x_2)$, $(x^*_1, x^*_2)$ symmetric with respect to $\tilde s$. Hence, we focus on candidates such that
	\begin{equation}
	\label{symmetry}
	\bar x_1 = 2 \tilde s - \bar x_2, \quad \,\,\, 
	x^*_1 = 2 \tilde s - x^*_2, \quad \,\,\,
	A_{11} =A_{22}e^{-2 \theta \tilde s}, \quad \,\,\,
	A_{12} =A_{21}e^{2\theta \tilde s}.
	\end{equation}
	Under condition \eqref{symmetry}, the systems in \eqref{syst1} and \eqref{syst2} are independent and equivalent: namely, the 4-uple $(A_{11}, A_{12}, \bar x_1, x^*_1)$ solves \eqref{syst1} if and only if $(A_{21}, A_{22}, \bar x_2, x^*_2)$, defined by \eqref{symmetry}, is a solution to \eqref{syst2}. Hence, we just need to solve one of the two systems of equations: we decide to focus on \eqref{syst2}, along with the order condition \eqref{OrdCondTWOPL}. By the change of variable
	\begin{equation}
	\label{Eq0}
	\bar y = e^{\theta \left(\bar x_2 - \tilde s \right)}, \quad \,\,\,
	y^* = e^{\theta \left(x^*_2 - \tilde s \right)}, \quad \,\,\,
	A_1= 2\theta A_{21} e^{\theta \tilde s}, \quad \,\,\,
	A_2= 2\theta A_{22} e^{-\theta \tilde s}
	\end{equation}
	and some algebraic manipulations, the conditions in \eqref{OrdCondTWOPL} and \eqref{syst2} read ({\color{black}see Appendix \ref{SecApp} for the details}, we set $\eta = (1-\lambda \rho)/\rho$, notice that $\eta >0$)
	\begin{subnumcases}{\label{news}}
	A_1 (y^*)^2 - 2 \eta y^* - A_2 = 0,  \label{news-a} \\ 
	A_1 \bar y^2 - 2 \eta \bar y - A_2 = 0, \label{news-b} \\ 
	(A_1+A_2)^2 (\bar y - y^*) + 2 A_2 \big[\theta(c-\tilde c) + (\lambda - \tilde \lambda) \log(\bar y / y^*)\big]=0, \label{news-c}\\ 
	A_1(\bar y - y^*) + \theta c - \eta \log(\bar y / y^*)=0, \label{news-d}\\
	y^*>0, \quad \bar y>0, \quad y^*<\bar y, \quad 1<\bar y y^*, \quad A_1y^* - \eta \leq 0. \label{news-e}
	\end{subnumcases}
	
	We now prove that there exists a unique solution $(A_1,A_2,\bar y,y^*)$ to \eqref{news}. Given a fixed pair $(A_1,A_2) \in \aaa$, where
	\begin{equation}
	\aaa = \big\{ (A_1, A_2) \, : \, A_1 >0, \quad A_2<0, \quad A_1+A_2<0, \quad \eta^2 + A_1 A_2>0 \big\},
	\end{equation}
	there exists a unique solution to \eqref{news-a}-\eqref{news-b}-\eqref{news-e}, given by
	\begin{equation}
	\label{proof2}
	\bar y (A_1,A_2) = \frac{\eta + \sqrt{\eta^2+A_1 A_2}}{A_1},
	\qquad\qquad
	y^*(A_1,A_2) = \frac{\eta - \sqrt{\eta^2+A_1 A_2}}{A_1}.
	\end{equation}	
	To conclude, we just need to prove that there exists a unique pair $(A_1,A_2)$ such that 
	\begin{equation}
	\label{proof3}
	\text{$(A_1,A_2) \in \aaa$ and $\big(A_1,A_2, \bar y (A_1,A_2), y^*(A_1,A_2) \big)$ is a solution to \eqref{news-c}-\eqref{news-d},}
	\end{equation}
	that is, by the expressions in \eqref{proof2}, such that
	\begin{subnumcases}{\label{proof5}}
	(A_1+A_2)^2 \sqrt{\eta^2+ A_1 A_2} + A_1 A_2 \left[\theta(c-\tilde c) + (\lambda - \tilde \lambda) \log\left( \frac{\eta + \sqrt{\eta^2+ A_1 A_2}}{\eta - \sqrt{\eta^2+ A_1 A_2}} \right)\right] =0, \qquad\qquad \label{proof5-a}\\
	2 \sqrt{\eta^2+ A_1 A_2} + \theta c - \eta \log\left( \frac{\eta + \sqrt{\eta^2+ A_1 A_2}}{\eta - \sqrt{\eta^2+ A_1 A_2}} \right)=0, \label{proof5-b}\\
	A_1 >0, \quad A_2<0, \quad A_1+A_2<0, \quad \eta^2 + A_1 A_2>0. \label{proof5-c}
	\end{subnumcases}
	{\color{black} Namely, \eqref{proof5-a}-\eqref{proof5-b} correspond to \eqref{news-c}-\eqref{news-d}, whereas the inequalities in \eqref{proof5-c} correspond to the condition $(A_1,A_2) \in \aaa$.}
	
	For $x \in (0, \eta)$, define the function
	\begin{equation}
	\label{DefF}
	F(x)= 2 x + \theta c - \eta \log\left( \frac{\eta + x}{\eta - x} \right).
	\end{equation}
	Since $F(0^+)= \theta c>0$, $F(\eta^-)=-\infty$ and $F'<0$, there exists a unique $\xi \in (0, \eta)$ such that $F(\xi)=0$. Consequently, \eqref{proof5} is equivalent to {\color{black} (see Appendix \ref{SecApp} for the details)}
	\begin{equation}
	\label{proof6}
	\begin{cases}
	A_1A_2 = -M, \\
	A_1+A_2=-2N, \\
	A_1 >0, \quad A_2<0,
	\end{cases}
	\text{with} \,\,
	M=\eta^2-\xi^2
	\,\,\text{and}\,\,
	N=\sqrt{\frac{(\eta^2 - \xi^2)\big[\theta\eta(c-\tilde c) + (\lambda -  \tilde\lambda) (2\xi + \theta c)\big]}{4\eta\xi}},
	\end{equation}
	which trivially has a unique solution (notice that $N^2+M>0$), namely
	\begin{equation}
	\label{proof7}
	A_1 = -N + \sqrt{N^2+M},
	\qquad\qquad
	A_2 = -N - \sqrt{N^2+M}.
	\end{equation}
	
	Finally, it is immediate to see that $\vphi_2''<0$ in $]-\infty,\tilde x[$ and $\vphi_2''>0$ in $]\tilde x, + \infty[$, for a suitable $\tilde x \in \rr$. By the change of variable, $\vphi_2''(\bar x_2)>0$ (resp.~$\vphi_2''(x^*_2)<0$) if and only if $A_1 \bar y - \eta >0$ (resp.~$A_1 y^* - \eta <0$), which is trivially true; hence, $\tilde x \in ]x^*_2,\bar x_2[$.
\end{proof}

\begin{remark}
\emph{From the proof of Proposition \ref{PropEx}, we see that the system in \eqref{syst2} has more than one solution, but only one satisfies the order condition \eqref{OrdCondTWOPL}. In particular, we notice that the other solution of \eqref{syst2} corresponds to
$\tilde A_{1} = - A_2$, $\tilde A_{2} = - A_1$, $\tilde y^*=1/\bar y$,  $\tilde{\bar y}= 1/y^*$.}
\end{remark}

\begin{remark}
\label{RemForm}
\emph{There are infinitely many solution to the system \eqref{OrdCondTWOPL}-\eqref{syst1}-\eqref{syst2}, indexed by the parameter $\tilde s \in \rr$. To simplify the notation, we will often omit the dependence on such a parameter and write, for example, $\bar x_i$ instead of $\bar x_i(\tilde s)$. By combining \eqref{symmetry}, \eqref{Eq0}, \eqref{proof2} and \eqref{proof7}, we can get (semi-)explicit formulas for the 8-uples $(A_{11}, A_{12}, A_{21}, A_{22}, \1, \2, \3, \4)$ which solve \eqref{OrdCondTWOPL}-\eqref{syst1}-\eqref{syst2}: namely, for any $\tilde s \in \rr$ we have
	\begin{equation}
	\label{explicit}
	\begin{gathered}
	\bar x_i = \tilde s + \frac{(-1)^i}{\theta} \log \left[\sqrt{\frac{\eta + \xi}{\eta - \xi}} \left( \sqrt{\Gamma+1} + \sqrt{\Gamma} \right) \right],
	\\
	x^*_i = \tilde s + \frac{(-1)^i}{\theta} \log \left[ \sqrt{\frac{\eta - \xi}{\eta + \xi}} \left( \sqrt{\Gamma +1} + \sqrt{\Gamma}\right) \right],
	\\
	A_{ij} = e^{(-1)^j\theta \tilde s} \frac{\sqrt{\eta^2-\xi^2}}{2\theta} \bigg( (-1)^{i+j+1}\sqrt{\Gamma+1} - \sqrt{\Gamma}\bigg),
	\end{gathered}
	\end{equation}
	for $i,j \in \{1,2\}$, where $\xi=\xi(c,\theta,\eta) \in (0,\eta)$ is the unique zero of the function $F$ in \eqref{DefF} and the coefficients are defined by 
	\begin{equation}
	\label{coeffexplicit}
	\theta = \sqrt{\frac{2\rho}{\sigma^2}},
	\qquad
	\eta = \frac{1 -\lambda\rho}{\rho},
	\qquad
	\Gamma = \frac{\theta(c-\tilde c)}{4\xi} + \frac{\theta c(\lambda -  \tilde\lambda)}{4\eta\xi} + \frac{\lambda -  \tilde\lambda}{2\eta}.
	\end{equation}
Also, notice that \eqref{symmetry} implies that 
\begin{equation}
\label{RelationV}
V_1(x)=V_2(2\tilde s - x)+\frac{2\tilde s -(s_1 + s_2)}{\rho},
\end{equation}
for $x \in \rr$. In particular, when $\tilde s = (s_1+s_2)/2$ the functions $\tilde V_i$ are symmetric with respect to $\tilde s$.}
\end{remark}

\begin{remark}
\label{RemCond}
\emph{Let $i,j \!\in\! \{1,2\}$, with $i \!\neq\! j$. From \eqref{explicit}, we notice that $\bar x_i = x_j^*$ when $\Gamma = 0$, which happens if and only if $(c,\lambda) = (\tilde c,\tilde \lambda)$. This situation gives rise to a degenerate solution, where players intervene infinitely often in each instant. We analyze this in more detail in Section 4.4, where we study the case where $\lambda = \tilde \lambda$ and $c \to \tilde c^+$.} 
\end{remark}

%
%

\subsection{Application of the verification theorem}
\label{sec:applying}

We now apply the Verification Theorem \ref{thm:verification} and prove that the candidates $\tilde V_1, \tilde V_2$ in Definition \ref{def:candidate} actually coincide with the payoff functions $V_1, V_2$ of the problem described in Section \ref{sec:thepb}. We refer the reader to Section \ref{ssec:QVI} for the definition of the functions $\delta_1, \delta_2, \mm_1,\mm_2$ used in the following lemma. 

\begin{lemma}
	\label{lem:studyofMV}
	Let $\tilde V_1, \tilde V_2$ be as in Definition \ref{def:candidate}. For every $x \in \rr$ we have
	\begin{equation}
	\label{formuladelta}
	\delta_1(x) = 
	\begin{cases}
	x^*_1 - x, & \text{in $]-\infty, x^*_1]$,}\\
	0, & \text{in $]x^*_1, +\infty[$,}
	\end{cases}
	\qquad\quad
	\delta_2(x) = 
	\begin{cases}
	0, & \text{in $]-\infty, x^*_2[$,}\\
	x^*_2-x, & \text{in $[x^*_2, +\infty[$.}
	\end{cases}
	\end{equation}
	Moreover, we have		
	\begin{equation}
	\label{contregion}
	\begin{aligned}
	\mm_1 \tilde V_1 - \tilde V_1 &\leq 0, \,\,\, &
	\{\mm_1 \tilde V_1 - \tilde V_1 < 0 \} &= \,\, ]\bar x_1, +\infty[, \,\,\, &
	\{\mm_1 \tilde V_1 - \tilde V_1 = 0 \} &= \,\, ]\!-\! \infty, \bar x_1], 
	\\	
	\mm_2 \tilde V_2 - \tilde V_2 &\leq 0, \,\,\, &
	\{\mm_2 \tilde V_2 - \tilde V_2 < 0 \} &= \,\, ]\!-\! \infty,\bar x_2[, \,\,\, &
	\{\mm_2 \tilde V_2 - \tilde V_2 = 0 \} &= [\bar x_2, + \infty[.
	\end{aligned}
	\end{equation} 
\end{lemma}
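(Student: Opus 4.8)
The plan is to verify the two displayed formulas by direct computation from the explicit piecewise definition of $\tilde V_1,\tilde V_2$ in Definition \ref{def:candidate}, exploiting the symmetry $\tilde V_1(x)=\tilde V_2(2\tilde s-x)$ recorded in the remarks. By this symmetry it suffices to establish all claims for one player, say player $2$, and then transfer them; so I would concentrate on computing $\delta_2$ and the sign of $\mm_2\tilde V_2-\tilde V_2$, deducing the player-$1$ statements by reflection. Recall from \eqref{MH} that $\mm_2\tilde V_2(x)=\sup_{y\le x}\{\tilde V_2(y)-c-\lambda(x-y)\}$, so everything reduces to analysing the scalar maximisation of $y\mapsto \tilde V_2(y)+\lambda y$ over $y\le x$.

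First I would show that $y\mapsto g(y):=\tilde V_2(y)+\lambda y$ attains its maximum at $x^*_2$ and is nondecreasing on $]-\infty,x^*_2]$ and nonincreasing on $[x^*_2,+\infty[$. On the interior interval $]\bar x_1,\bar x_2[$ we have $\tilde V_2=\vphi_2$, and condition \eqref{syst2-a} gives $\vphi_2'(\4)=-\lambda$ with $\vphi_2''(\4)\le 0$, i.e. $g'(x^*_2)=0$; the shape information from Proposition \ref{PropEx} (that $\vphi_2''<0$ on $]\bar x_1,\tilde x[$ and $\vphi_2''>0$ on $]\tilde x,\bar x_2[$, with $x^*_2<\tilde x$) lets me control the monotonicity of $g$ there. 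On the two outer intervals $\tilde V_2$ is affine with the correct slopes: for $x\le\bar x_1$ we have $\tilde V_2(x)=\vphi_2(\3)+\tilde c+\tilde\lambda(\3-x)$, so $g'(x)=\lambda-\tilde\lambda\ge 0$, and for $x\ge\bar x_2$ we have $\tilde V_2(x)=\vphi_2(\4)-c-\lambda(x-\4)$, so $g'(x)=0$. Using the $C^1$-pasting \eqref{syst2-b} at $\bar x_2$ (which gives $\vphi_2'(\bar x_2)=-\lambda$, i.e. continuity of $g'$ across $\bar x_2$) and the order condition $\lambda\ge\tilde\lambda$, I conclude $g$ increases up to $x^*_2$ and decreases afterward, so the constrained supremum defining $\mm_2\tilde V_2$ is attained at $y=\min\{x,x^*_2\}$. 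This immediately yields the formula for $\delta_2$ in \eqref{formuladelta} via $\delta_2(x)=\argmax_\delta$, and the reflected argument gives $\delta_1$.

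Next I would compute $\mm_2\tilde V_2-\tilde V_2$ explicitly in each region and check the three claims in the bottom row of \eqref{contregion}. For $x\ge\bar x_2$ the maximiser is $x^*_2$ and one reads off, using the very definition of $\tilde V_2$ on $[\bar x_2,+\infty[$ as $\vphi_2(\4)-c-\lambda(x-\4)$, that $\mm_2\tilde V_2(x)=\tilde V_2(x)$ exactly, giving the equality region $[\bar x_2,+\infty[$. For $x<\bar x_2$ the maximiser is again $x^*_2$ when $x\ge x^*_2$ and $x$ itself when $x<x^*_2$; in the latter case $\mm_2\tilde V_2(x)-\tilde V_2(x)=-c<0$ trivially, and in the former I must verify the strict inequality $\vphi_2(\4)-c-\lambda(x-\4)<\tilde V_2(x)$ on $]x^*_2,\bar x_2[$, which by the strict concavity/convexity profile of $g$ away from $x^*_2$ reduces to $g(x^*_2)-c<g(x)$ — false direction, so more carefully it reduces to showing $g(x)>g(x^*_2)-c$ fails; the point is that the gap is $c$ minus a nonnegative quantity, so I need the maximal drop of $g$ from $x^*_2$ to be strictly less than $c$ on $]x^*_2,\bar x_2[$, which is exactly encoded by the $C^0$-pasting \eqref{syst2-d} at $\bar x_2$ forcing equality precisely at $x=\bar x_2$ and strict inequality before.

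The main obstacle I anticipate is the last point: confirming that $\mm_2\tilde V_2-\tilde V_2$ is strictly negative throughout $]-\infty,\bar x_2[$ and hits $0$ exactly at $\bar x_2$, rather than earlier or on an interval. This is where the algebraic pasting conditions \eqref{syst2-c}--\eqref{syst2-d} and the order conditions $c\ge\tilde c$, $\lambda\ge\tilde\lambda$, $(c,\lambda)\ne(\tilde c,\tilde\lambda)$ must be invoked in a quantitative way: the function $x\mapsto \mm_2\tilde V_2(x)-\tilde V_2(x)$ equals $g(x^*_2)-g(x)-c$ on $[x^*_2,\bar x_2]$, which is monotone by the monotonicity of $g$ established above, so it suffices to check its value at the single endpoint $\bar x_2$ (zero, by \eqref{syst2-d}) and its strict negativity just before, the latter being guaranteed because $g$ is strictly decreasing there. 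I would also need to rule out that the supremum over $y\le x$ is attained at an interior point of an outer affine region rather than at $x^*_2$, but the monotonicity of $g$ handles this uniformly. Everything else is bookkeeping across the three regions, made routine once the single-variable analysis of $g$ is in place.
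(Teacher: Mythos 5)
Your proposal is correct and follows essentially the same route as the paper's proof: both reduce the problem to the one-variable analysis of $\Gamma_2(y)=\tilde V_2(y)+\lambda y$ (your $g$), use the sign pattern of $\vphi_2''$ from Proposition \ref{PropEx} together with the outer affine pieces and $\lambda\ge\tilde\lambda$ to show the unique global maximum sits at $x_2^*$, and then invoke the $C^0$-pasting \eqref{syst2-d} plus strict monotonicity of $\Gamma_2$ on $]x_2^*,\bar x_2[$ to get strict negativity of $\mm_2\tilde V_2-\tilde V_2$ below $\bar x_2$ and equality beyond it. The only cosmetic difference is that you transfer the player-1 claims by the symmetry $\tilde V_1(x)=\tilde V_2(2\tilde s-x)$, while the paper simply repeats the same argument for player 1; your momentary "false direction" detour in the third paragraph is self-corrected and lands on exactly the paper's inequality $\Gamma_2(x)-\Gamma_2(\bar x_2)>0$.
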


\begin{proof}
	We give the proof only for $\delta_2$ and $\mm_2 \tilde V_2$, the arguments for $\delta_1$ and $\mm_1 \tilde V_1$ being the same. For every $x \in \rr$, we have
	\begin{equation}\label{comp-M2}
	\mm_2 \tilde V_2(x) \!=\! \max_{\delta_2 \leq 0} \{ \tilde V_2(x+\delta_2) - c - \lambda(-\delta_2) \} \!=\! \max_{y \leq x} \{ \tilde V_2(y) - c - \lambda(x-y) \} \!=\!  \max_{y \leq x} \{\Gamma_2(y)\} - c -\lambda x,
	\end{equation}
	where for each $y \in \rr$ we have set
	\begin{equation*}
	\Gamma_2(y) = \tilde V_2(y) + \lambda y.
	\end{equation*}
	By the definition of $\tilde V_2$, we have $\Gamma_2'(x_2^*)=\Gamma_2'(\bar x_2)=0$. Moreover, we notice that:
	\begin{itemize}
		\item[-] $\Gamma_2'= \lambda - \tilde \lambda \geq 0$ in $]-\infty,\bar x_1[$, by the definition of $\tilde V_2$;
		\item[-] $\Gamma_2'> 0$ in $]\bar x_1, x_2^*[$, as $\Gamma_2'(x_2^*)=0$  and $\Gamma_2'$ is decreasing in $]\bar x_1, x_2^*[$ (since, by Proposition \ref{PropEx}, we have $\Gamma_2''= \vphi_2''<0$ in $]\bar x_1, x_2^*[$);
		\item[-] $\Gamma_2'< 0$ in $]x_2^*, \bar x_2[$, as $\Gamma_2'(x_2^*)=\Gamma_2'(\bar x_2)=0$ and, in the interval $]x_2^*, \bar x_2[$, $\Gamma_2'$ is first decreasing and then increasing (since, by Proposition \ref{PropEx}, $\Gamma_2''= \vphi_2''$ is negative in $]x^*_2, \tilde x[$ and positive in $]\tilde x, \bar x_2[$);
		\item[-] $\Gamma_2' = 0$ in $]\bar x_2, + \infty[$, by the definition of $\tilde V_2$.
	\end{itemize}
	As a consequence, the function $\Gamma_2$ has a unique global maximum point in $x^*_2$, so that 
	\begin{equation*}
	\max_{y\leq x} \Gamma_2(y) = 
	\begin{cases}
	\Gamma_2(x), & \text{in $]-\infty, x^*_2]$,}\\
	\Gamma_2(x_2^*), & \text{in $]x^*_2, +\infty[$;}
	\end{cases}
	\end{equation*}
	therefore, by the computations in (\ref{comp-M2}), we have
	\begin{equation*}
	\mm_2 \tilde V_2(x) = 
	\begin{cases}
	\tilde V_2(x) - c, & \text{in $]-\infty, x^*_2]$,}\\
	\vphi_2(x^*_2) - c- \lambda (x-x^*_2), & \text{in $]x^*_2, +\infty[$,}
	\end{cases}
	\end{equation*}
	as $\tilde V_2(x^*_2) \!=\! \vphi_2(x^*_2)$, since $x^*_2 \in ]\bar x_1, \bar x_2[$. By the definition of $\tilde V_2$, this can be written as
	\begin{equation*}
	\mm_2 \tilde V_2(x) = 
	\begin{cases}
	\tilde V_2(x) - \xi_2(x), & \text{in $]-\infty, \bar x_2[$,}\\
	\tilde V_2(x) , & \text{in $[\bar x_2,+\infty[$,}
	\end{cases}
	\end{equation*}
	where, for each $x \in ]-\infty, \bar x_2[$, we have set
	\begin{equation*}
	\xi_2(x)  = 
	\begin{cases}
	c, & \text{in $]-\infty, x^*_2[$,}\\
	\vphi_2(x) - \vphi_2(x^*_2) + c +  \lambda (x-x^*_2), & \text{in $[x^*_2,\bar x_2[$.}
	\end{cases}
	\end{equation*}
	Let us prove that $\xi_2 >0$. Recall by \eqref{syst2} that $\vphi_2(\2)= \vphi_2(\4) - c - \lambda(\2-\4)$. Then, if $x \in [x^*_2,\bar x_2[$ we have that
	\begin{equation*}
	\vphi_2(x) - \vphi_2(x^*_2) + c +  \lambda (x-x^*_2) = \vphi_2(x) - \vphi_2(\bar x_2) - \lambda (\bar x_2 -x) = \Gamma_2(x) - \Gamma_2(\bar x _2) >0,
	\end{equation*} 
	as $\Gamma_2$ is decreasing in $[x^*_2,\bar x_2[$. Hence, $\xi_2$ is strictly positive, so that \eqref{contregion} holds. Finally, by the previous arguments it is clear that
	\begin{equation*}
	\argmax_{\delta_2 \leq 0} \{ \tilde V_2(x+\delta_2) - c - \lambda|\delta_2| \} = 
	\begin{cases}
	\,\{0\}, & \text{in $]-\infty, x^*_2[$,}\\
	\,\{x^*_2-x\}, & \text{in $]x^*_2, +\infty[$,}
	\end{cases}
	\end{equation*}
	which implies \eqref{formuladelta}.
\end{proof}

\begin{prop}
	\label{prop:checkverification}
	For $\tilde s \in \rr$, let  $x^*_i = x^*_i(\tilde s)$ and $\bar x_i = \bar x_i (\tilde s)$, with $i \in \{1,2\}$, as in Definition \ref{def:candidate}. Then, a Nash equilibrium for the problem in Section \ref{sec:thepb} is given by the strategies $(\mathcal{C}^*_1,\xi^*_1)$, $(\mathcal{C}^*_2,\xi^*_2)$ defined by
	\begin{align*}
	\mathcal{C}_1^* &= \,\, ]\bar x_1, \, +\infty[,&  
	\xi^*_1(y) &= x^*_1 - y , \\
	\mathcal{C}_2^* &= \,\, ]\!-\!\infty, \, \bar x_2[,&  
	\xi^*_2(y) &= x^*_2 - y,
	\end{align*}
	with $y \in \rr$. Moreover, the functions $\tilde V_1, \tilde V_2$ in Definition \ref{def:candidate} coincide with the equilibrium payoff functions $V_1,V_2$:
	\begin{equation*}
	V_1 \equiv \tilde V_1 \qquad \text{and} \qquad V_2 \equiv \tilde V_2.
	\end{equation*}
\end{prop}

\begin{remark}
	\emph{We underline that $(\mathcal{C}^*_1,\xi^*_1)$, $(\mathcal{C}^*_2,\xi^*_2)$ depends on the free parameter $\tilde s$, i.e., $\mathcal{C}^*_i = \mathcal{C}^*_i(\tilde s)$ and $\xi^*_i = \xi^*_i(\cdot \, ; \tilde s)$  (we often omit to underline the dependence to simplify the notations). In particular, there exist infinitely many Nash equilibria, indexed by the parameter $\tilde s \in \rr$. Notice that the corresponding optimal intervention regions and intervention functions consist in the translations of a fixed interval: $\mathcal{C}_i^*(\tilde s) = \tilde s + \mathcal{C}_i^*(0)$ and $\xi_i^*(\cdot \,; \tilde s) = \tilde s + \xi_i^*(\cdot \,; 0)$, for any $s \in \rr$.}
\end{remark}

\begin{remark}
\emph{Recall the practical characterization of the strategy: if $x$ is the current state of the process, player 1 (resp.~player 2) intervenes when $x \leq \bar x_1$ (resp.~$x \geq \bar x_2$) and moves the process to the new state $x^*_1$ (resp.~$x^*_2$).}
\end{remark}

\begin{proof}
	We have to check that the candidates $\tilde V_1, \tilde V_2$  satisfy all the assumptions of Theorem \ref{thm:verification}. We prove the claim for $\tilde V_2$, the arguments for $\tilde V_1$ being the same.	For the reader's convenience, we briefly report the conditions we have to check:
	\begin{itemize}
		\item[(i)] $\tilde V_2 \in C^2(]\bar x_1, +\infty[ \setminus \{\bar x_2\}) \cap C^1(]\bar x_1, +\infty[) \cap C(\rr)$ and has polynomial growth;
		\item[(ii)] $\mm_2 \tilde V_2 - \tilde V_2 \leq 0$;
		\item[(iii)] in $\{\mm_1 \tilde V_1 - \tilde V_1 = 0 \}$ we have $\tilde V_2 = \hh_2 \tilde V_2$;
		\item[(iv)] in $\{\mm_1 \tilde V_1 - \tilde V_1 < 0 \}$ we have $\max\big\{\aaa \tilde V_2 -\rho \tilde V_2+ f_2, \mm_2 \tilde V_2-\tilde V_2 \}=0$;
		\item[(v)] the equilibrium strategies are $x$-admissible (see Definition \ref{AdmStrat}) for every $x \in \rr$.
	\end{itemize}
	
	\textit{Condition (i) and (ii).} The first condition holds by the definition of $\tilde V_2 $, whereas the second condition has been proved in \eqref{contregion}.
	
	\textit{Condition (iii).} Let $x \in \{\mm_1 \tilde V_1 - \tilde V_1 = 0 \} = ]-\infty, \bar x_1] $. By the definition of $\hh_2 \tilde V_2$ in \eqref{MH}, by \eqref{formuladelta} and by the definition of $\tilde V_2$ we have 
	\begin{equation*}
	\hh_2 \tilde V_2(x) = \tilde V_2 (x+\delta_1(x)) + \tilde c + \tilde\lambda|\delta_1(x)| = \tilde V_2(x^*_1) + \tilde c + \tilde\lambda(x^*_1 - x) = \tilde V_2(x),
	\end{equation*}
	where we have used that $\tilde V_2(x^*_1) = \vphi_2(x^*_1)$, since $x^*_1 \in ]\bar x_1, \bar x_2[$.
	
	\textit{Condition (iv).} We have to prove that 
	\begin{equation*}
	\max\big\{\aaa \tilde V_2 -\rho \tilde V_2+ f_2, \mm_2 \tilde V_2- \tilde V_2 \}=0,
	\qquad \text{in} \,\, \{\mm_1 \tilde V_1 - \tilde V_1 < 0 \} = ]\bar x_1, +\infty[.
	\end{equation*}
	In $]\bar x_1, \bar x_2[$ the claim is true, as $\mm_2 \tilde V_2- \tilde V_2 < 0$ by \eqref{contregion} and $\aaa \tilde V_2 -\rho \tilde V_2+ f_2 = 0$ by definition (in $]\bar x_1, \bar x_2[$ we have $\tilde V_2 = \vphi_2$, which is a solution to the ODE \eqref{EqEx}). In $[\bar x_2, \infty[$ we already know by \eqref{contregion} that $\mm_2 \tilde V_2- \tilde V_2 = 0$. Then, to conclude we have to check that
	\begin{equation*}
	\aaa \tilde V_2(x) -\rho \tilde V_2(x)+ f_2(x) \leq 0, \qquad \text{$\forall x \in [\bar x_2, \infty[$.}
	\end{equation*}
	As $\tilde V_2(x) = \vphi_2(\4) - c - \lambda(x-\4)$ by the definition of $\tilde V_2(x)$, the inequality can be written as
	\begin{equation*}
	-\rho \big( \vphi_2(\4) - c - \lambda(x-\4) \big) +f_2(x) \leq 0, \qquad \text{$\forall x \in [\bar x_2, \infty[$.}
	\end{equation*}
	Since $\vphi_2(\2)= \vphi_2(\4) - c - \lambda(\2-\4)$ by \eqref{syst2}, we can rewrite the claim as
	\begin{equation*}
	-\rho \big( \vphi_2(\bar x_2) - \lambda (x-\bar x_2) \big) +f_2(x) \leq 0, \qquad \text{$\forall x \in [\bar x_2, \infty[$.}
	\end{equation*}
	The function $x \mapsto \lambda \rho x + f_2(x) = (\lambda \rho-1) x + s_2$ is decreasing, hence it is enough to prove the claim in $x=\bar x_2$:
	\begin{equation*}
	-\rho \vphi_2(\bar x_2) +f_2(\bar x_2) \leq 0.
	\end{equation*}
	Since $\aaa \vphi_2(\bar x_2) - \rho \vphi_2(\bar x_2) + f_2(\bar x_2) =0$, we can rewrite as
	\begin{equation*}
	-\frac{\sigma^2}{2} \vphi_2''(\bar x_2) \leq 0,
	\end{equation*}
	which is true since $\vphi_2''(\bar x_2) \geq 0$ by Proposition \ref{PropEx}.
	
	\textit{Condition (v).} Let $x$ be the initial state of the process. By construction the controlled process never exits from $]\bar x_1, \bar x_2[ \,\,\cup\, \{x\}$, so that condition \eqref{L1process} holds. It is easy to check that all the other conditions of Definition \ref{AdmStrat} are satisfied. The only non-trivial proof is the integrability of the intervention costs: let us prove that for $i \in \{1,2\}$ we have (the result for $\tilde c, \tilde \lambda$ immediately follows, as $\tilde \lambda < \lambda$ and $\tilde c < c$)
	\begin{equation}
	\label{intergrability}
	\eee \bigg[ \sum_{k \geq 1} e^{-\rho \tau^*_{i,k}} (c + \lambda |\delta^*_{i,k}|) \bigg] < \infty, 
	\end{equation}
	where $ \{ \tau^*_{i,k}, \delta^*_{i,k}\}_k$ are the controls corresponding to the equilibrium strategies. 
	
	To start, let us assume that the initial state $x$ is either $x^*_1$ or $x^*_2$.  We here consider $x=x^*_1$, the arguments are the same in the case $x=x^*_2$. Since player $i$ shifts the process to $x^*_i$ when the state $\bar x_i$ is hit, the idea is to write $\tau^*_{i,k}$ as a sum of independent exit times. First of all, we re-label the indexes and write $\{\tau^*_{i,k}\}_{i,k}$ as $\{\sigma_j\}_j$, with $\sigma_j < \sigma_{j+1}$ for every $j \in \nn$.
	Denote by $\mu_i$ the exit time of the process $x^*_i + \sigma W$ from $]\bar x_1,\bar x_2[$, where $W$ is a real Brownian motion; then, each time $\sigma_j$ can be written as $\sigma_j = \sum_{l=1}^j \zeta_l$, where the $\zeta_l$ are independent variables which are distributed either as $\mu_1$ or as $\mu_2$. We can now estimate \eqref{intergrability}. As $\delta^*_{i,k} \in \{\bar x_2 - x^*_2, x^*_1- \bar x_1\}$, we have 
	\begin{equation*}
	\mathbb{E}_{x^*_1} \bigg[ \sum_{i \in \{1,2\}}\sum_{k \geq 1} e^{-\rho \tau^*_{i,k}} (c + \lambda |\delta^*_{i,k}|) \bigg]
	\leq (c + \lambda \max \{\bar x_2 - x^*_2, x^*_1- \bar x_1\}) \, \mathbb{E}_{x^*_1} \bigg[ \sum_{i \in \{1,2\}}\sum_{k \geq 1} e^{-\rho \tau^*_{i,k}} \bigg].
	\end{equation*}
	By the definition of $\{\sigma_j\}_j$ and the decomposition of $\sigma_j$,
	\begin{equation*}
	\mathbb{E}_{x^*_1} \bigg[ \sum_{i \in \{1,2\}}\sum_{k \geq 1} e^{-\rho \tau^*_{i,k}} \bigg] \!=\! \mathbb{E}_{x^*_1} \bigg[ \sum_{j \geq 1} e^{-\rho \sigma_j} \bigg] \!=\! \mathbb{E}_{x^*_1} \bigg[ \sum_{j \geq 1} e^{-\rho \sum_{l=1}^{j}\zeta_l} \bigg] \!=\! \mathbb{E}_{x^*_1} \bigg[ \sum_{j \geq 1} \prod_{l=1,\dots,j} e^{-\rho \zeta_l} \bigg].
	\end{equation*}
	By the Fubini-Tonelli theorem and the independence of the variables $\zeta_j$, we get 
	\begin{equation*}
	\mathbb{E}_{x^*_1} \bigg[ \sum_{j \geq 1} \prod_{l=1,\dots,j} e^{-\rho \zeta_l} \bigg] = \sum_{j \geq 1} \prod_{l=1,\dots,j} \mathbb{E}_{x^*_1} [ e^{-\rho \zeta_l} ] \leq \sum_{j \geq 1} \big( \mathbb{E}_{x^*_1} [ e^{-\rho \min \{\mu_1, \mu_2\}}] \big)^j,
	\end{equation*}
	which is a converging geometric series, since $\mu_1,\mu_2 > 0$ ($\mu_i$ is strictly positive since $\bar x_1 < x^*_i< \bar x_2$). To sum up, we have shown  
	\begin{equation*}
	\mathbb{E}_{x^*_1} \bigg[ \sum_{i \in \{1,2\}}\sum_{k \geq 1} e^{-\rho \tau^*_{i,k}} (\max \{ c, \tilde c \} + \lambda |\delta^*_{i,k}|) \bigg] < \infty,
	\end{equation*}
	which clearly implies \eqref{intergrability}. The general case with initial state $x \in \rr$ can be treated similarly: we have $\sigma_j = \eta + \sum_{l=1}^j \zeta_l$, where $\eta$ is the exit time of $x + \sigma W$ from $[\bar x_1,\bar x_2]$, and the argument can be easily adapted. 
\end{proof}

\subsection{Comments and some limit properties}	
\label{ssec:asymp}

In order to understand the qualitative behaviour of the Nash equilibria described in the previous section, we here study some asymptotic properties of the corresponding continuation regions and payoff functions. First, we recall some formulas from the previous sections, for reader's convenience. The payoff functions of some of the Nash equilibria described before are
\begin{equation}
V_2(x) = 
\begin{cases}
\vphi_2^{A_{21},A_{22}}(\3) + \tilde c + \tilde \lambda(\3-x), & \text{if $x \in \,\,]-\infty, \1]$,}\\
\vphi_2^{A_{21},A_{22}}(x), & \text{if $x \in \,\,]\1,\2[$,}\\
\vphi_2^{A_{21},A_{22}}(\4) - c - \lambda (x-\4), & \text{if $x \in [\2,+\infty[$,}
\end{cases}
\qquad
V_1(x)\!=\!V_2(2\tilde s - x)+\frac{2\tilde s \!-\!(s_1 \!+\! s_2)}{\rho},
\end{equation}
where the function $\vphi_2^{A_{21}, A_{22}}$ is defined in \eqref{defxi} and the parameters $\bar x_i, x^*_i, A_{ij}$ are defined in \eqref{explicit}. In particular, we recall the symmetry relations: 
\begin{equation*}
\bar x_1 = 2 \tilde s - \bar x_2,
\qquad\qquad
x^*_1 = 2 \tilde s - x^*_2.
\end{equation*}
Also, recall that player 1 (resp.~player 2) intervenes if the state is smaller than $\bar x_1$ (resp.~greater than $\bar x_2$) and moves the process to $x^*_1$ (resp.~$x^*_2$).

Finally we remark that the parameter $\xi=\xi(c, \theta, \eta)$, defined as the unique zero of the function $F$ in \eqref{DefF}, satisfies the following properties: for given parameters $\theta$ and $\eta$, the function $c \mapsto \xi(c, \theta, \eta)=:\xi(c)$ belongs to $C^\infty(]0,\infty[)$ and we have
\begin{equation}
\label{derivXi}
\xi'(c) = \frac{\theta}{2} \frac{\eta^2 -\xi^2(c)}{\xi^2(c)},
\qquad\qquad
\xi''(c) = - \theta \eta^2 \frac{\xi'(c)}{\xi^3(c)}= - \frac{\theta^2 \eta^2}{2} \frac{\eta^2 -\xi^2(c)}{\xi^5(c)};
\end{equation}
in particular, the following limits hold:
\begin{equation}
\label{limxi}
\lim_{c \to 0^+} \xi(c)=
\lim_{c \to 0^+} \frac{c}{\xi(c)} =
\lim_{c \to 0^+} c \, \xi'(c) = 
\lim_{c \to +\infty} c(\eta - \xi(c)) = 0,
\qquad\qquad
\lim_{c \to +\infty} \xi(c) = \eta.
\end{equation}

We now focus on the properties of the continuation region $]\bar x_1, \bar x_2[$ and the target states $x^*_i$ with respect to the parameter $c$. All the other parameters are assumed to be fixed. To underline the dependence on this parameter, we will write $\bar x_i = \bar x_i (c)$, $x^*_i = x^*_i (c)$, $A_{ij}=A_{ij}(c)$ and $V_i=V_i^c$, for $i,j \in \{1,2\}$. For the limits, we will write $V^{0^+}_i=\lim_{c \to 0^+} V_i^c$, $x^*_i(+\infty)=\lim_{c \to +\infty} x^*_i(c)$ and so on.

\paragraph{Limits as $c \to 0^+$.} Since we are going to consider the limit $c \to 0^+$ and since by assumption we need $c > \tilde c$, with $\tilde c$ fixed, we assume $\tilde c=0$. If the fixed intervention cost vanishes, that is $c \to 0^+$, we expect that the players continuously intervene to keep the process in a state which satisfies both of them (namely $\tilde s$, for symmetry reasons): in other words, we expect the continuation region $]\bar x_1(c), \bar x_2(c)[$ to collapse to the singleton $\{\tilde s\}$, as $c\to 0^+$. Practically, if the initially state is $x$, either player 1 (if $x < \tilde s$) or player 2 (if $x > \tilde s$) shifts the process to $\tilde s$; from then on, we constantly have $X_s \equiv \tilde s$. As a consequence, we guess that the equilibrium payoff function for player 2 is 
\begin{multline*}
V_2^{0^+}(x) = 
\eee \bigg[ \int_0^{\infty} e^{-\rho s} (s_2 - \tilde s) ds - \lambda (x-\tilde s)\mathbbm{1}_{\{x < \tilde s\}} + \tilde \lambda (\tilde s - x) \mathbbm{1}_{\{x > \tilde s\}}
\bigg] 
\\
= \frac{s_2 - \tilde s}{\rho} - (\tilde \lambda \mathbbm{1}_{\{x > \tilde s\}} + \lambda\mathbbm{1}_{\{x < \tilde s\}})(x - \tilde s) .
\end{multline*}
{\color{black} Notice that this limit situation is formally degenerate with respect our framework, in the sense that it requires singular interventions in continuous time.} We now rigorously prove these heuristic arguments by considering the explicit expression for the intervention region provided in \eqref{explicit}. Actually, the limit situation is not as straightforward as it may appear: the parameters $\lambda, \tilde \lambda$ play an important role. 

\begin{prop}
	Assume $\tilde c =0$ and $\lambda = \tilde \lambda$. Then we have, for $i \in \{1,2\}$ and $x \in \rr$
	\begin{equation*}
	\bar x_i(0^+) = x^*_i(0^+) = \tilde s,
	\qquad\quad
	V^{0^+}_1 (x) = \frac{\tilde s - s_1}{\rho} + \lambda (x - \tilde s),
	\qquad\quad
	V^{0^+}_2 (x) = \frac{s_2 - \tilde s}{\rho} - \lambda (x-\tilde s).
	\end{equation*}
\end{prop}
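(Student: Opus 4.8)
The plan is to establish the limits $\bar x_i(0^+) = x^*_i(0^+) = \tilde s$ directly from the explicit formulas in \eqref{explicit}, and then deduce the limits of the value functions. The key observation is that, under the assumptions $\tilde c = 0$ and $\lambda = \tilde \lambda$, the quantity $\Gamma$ defined in \eqref{coeffexplicit} vanishes identically (each of its three terms has a factor $c - \tilde c = c$ or $\lambda - \tilde \lambda = 0$), so the thresholds reduce to
\begin{equation*}
\bar x_i(c) = \tilde s + \frac{(-1)^i}{\theta} \log \sqrt{\frac{\eta + \xi(c)}{\eta - \xi(c)}},
\qquad
x^*_i(c) = \tilde s + \frac{(-1)^i}{\theta} \log \sqrt{\frac{\eta - \xi(c)}{\eta + \xi(c)}}.
\end{equation*}
Since $\lim_{c \to 0^+} \xi(c) = 0$ by \eqref{limxi}, the argument of each logarithm tends to $1$, and hence $\bar x_i(0^+) = x^*_i(0^+) = \tilde s$ for $i \in \{1,2\}$. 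This confirms the heuristic that the continuation region collapses to the singleton $\{\tilde s\}$.

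Next I would compute the limit of the value function $V_2^c$. By the symmetry relation $V_1(x) = V_2(2\tilde s - x)$, it suffices to treat $V_2$ and then substitute; note that under $\lambda = \tilde\lambda$ the two claimed limits coincide, so this is consistent. Fix $x \in \rr$ and split into cases according to the position of $x$ relative to $\tilde s$. Since $\bar x_1(c), \bar x_2(c) \to \tilde s$, for any $x \neq \tilde s$ there exists $c$ small enough that $x$ lies in the appropriate outer interval, and one evaluates $V_2^c(x)$ using the corresponding branch of Definition \ref{def:candidate}. For instance, if $x > \tilde s$, then eventually $x \geq \bar x_2(c)$ and $V_2^c(x) = \vphi_2^{A_{21},A_{22}}(x^*_2(c)) - c - \lambda(x - x^*_2(c))$; as $c \to 0^+$ one has $x^*_2(c) \to \tilde s$, $c \to 0$, and $\vphi_2^{A_{21},A_{22}}(x^*_2(c)) \to (s_2 - \tilde s)/\rho$, so that $V_2^{0^+}(x) = (s_2-\tilde s)/\rho - \lambda(x - \tilde s)$. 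The symmetric computation for $x < \tilde s$ (using the $\tilde c, \tilde\lambda$ branch and $\tilde c = 0$, $\tilde\lambda = \lambda$) gives the same expression, and the point $x = \tilde s$ follows by continuity.

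The main technical point is controlling the limit of $\vphi_2^{A_{21},A_{22}}(x^*_2(c))$, i.e.~showing it converges to $(s_2 - \tilde s)/\rho$, which is the running-payoff value evaluated at $\tilde s$. This requires checking that the homogeneous part $A_{21} e^{\theta y} + A_{22} e^{-\theta y}$ of $\vphi_2$, evaluated at $y = x^*_2(c) \to \tilde s$, vanishes in the limit. From \eqref{explicit} with $\Gamma = 0$ we have $A_{ij}(c) = (-1)^{i+j+1} e^{(-1)^j \theta \tilde s}\sqrt{\eta^2 - \xi^2(c)}/(2\theta)$, so each $A_{ij}(c) = O(\sqrt{\eta^2 - \xi^2(c)})$, which stays bounded as $c \to 0^+$ (indeed $\xi \to 0$, so it tends to $\sqrt{\eta^2 - 0} \cdot$ const). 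Thus I must verify more carefully that the combination $A_{21}(c) e^{\theta x^*_2(c)} + A_{22}(c) e^{-\theta x^*_2(c)}$ tends to zero; this is where the precise cancellation between the two exponential terms, together with $x^*_2(c) \to \tilde s$, must be exploited, plugging in the explicit forms and using $e^{\pm\theta(x^*_2(c) - \tilde s)} \to 1$. I expect this algebraic cancellation to be the only delicate step; everything else reduces to taking limits term by term using \eqref{limxi}.
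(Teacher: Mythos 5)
There is a genuine gap at the very first step. You claim that under $\tilde c = 0$ and $\lambda = \tilde\lambda$ the quantity $\Gamma$ in \eqref{coeffexplicit} ``vanishes identically,'' on the grounds that each term carries a factor $c - \tilde c = c$ or $\lambda - \tilde\lambda = 0$. This is false: it is $\tilde c$, not $c$, that is zero, so the first term does not vanish, and in fact
\begin{equation*}
\Gamma(c) \;=\; \frac{\theta(c-\tilde c)}{4\xi(c)} \;=\; \frac{\theta c}{4\xi(c)} \;>\; 0
\qquad \text{for every } c>0 .
\end{equation*}
Consequently the simplified formulas you write for $\bar x_i(c)$, $x^*_i(c)$ and $A_{ij}(c)$ at finite $c$ are incorrect, and everything downstream that is stated ``with $\Gamma = 0$'' is, as written, unjustified. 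Your conclusions are nevertheless correct, but only because $\Gamma(c) \to 0$ as $c \to 0^+$, and this is precisely the nontrivial fact $\lim_{c\to 0^+} c/\xi(c) = 0$ recorded in \eqref{limxi} --- a limit you never invoke (you only use $\xi(c)\to 0$, which by itself does not control $c/\xi(c)$ and hence does not kill $\Gamma$). Note the limit $\Gamma(c)\to 0$ is genuinely essential: if $\Gamma$ converged to some $\Gamma_0>0$, the thresholds would satisfy $\bar x_i(0^+) = \tilde s + \frac{(-1)^i}{\theta}\log\big(\sqrt{\Gamma_0+1}+\sqrt{\Gamma_0}\big) \neq \tilde s$ and the proposition would fail.

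Once this is repaired --- replace ``$\Gamma \equiv 0$'' by ``$\Gamma(c) = \theta c/(4\xi(c)) \to 0$ by \eqref{limxi}'' and take limits in the exact formulas \eqref{explicit} --- your argument becomes essentially the paper's proof: both compute $\bar x_i(0^+) = x^*_i(0^+) = \tilde s$ and $A_{21}(0^+) = e^{-\theta\tilde s}\eta/(2\theta)$, $A_{22}(0^+) = -e^{\theta\tilde s}\eta/(2\theta)$ from \eqref{explicit} and \eqref{limxi}, and then pass to the limit in $V_2^c$ branch by branch (the paper does this more tersely, and gets $\bar x_1$, $x^*_i$ via symmetry and the sandwich $x^*_i \in \,]\bar x_1,\bar x_2[$ rather than from the formulas). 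One further remark: the ``delicate cancellation'' you defer is actually immediate once the limits of the $A_{ij}$ are in hand, since
\begin{equation*}
A_{21}(0^+)\,e^{\theta \tilde s} + A_{22}(0^+)\,e^{-\theta \tilde s}
= \frac{\eta}{2\theta} - \frac{\eta}{2\theta} = 0 ,
\end{equation*}
and the $A_{ij}(c)$ converge while $x^*_2(c)\to\tilde s$, so no finer analysis of the error terms is needed.
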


\begin{proof}
	By \eqref{explicit} and \eqref{limxi} it follows that $\bar x_2(c) \to \tilde s$ as $c \to 0^+$. The same result holds for $\bar x_1$ by symmetry and hence also for $x^*_i$, since $x^*_i \in ]\bar x_1, \bar x_2[$. Moreover,  again by \eqref{explicit} and \eqref{limxi}, we get
	\begin{equation*}
	A_{21}(0^+) = e^{-\theta \tilde s} \frac{\eta}{2\theta}, 
	\qquad\qquad 
	A_{22}(0^+) = - e^{\theta \tilde s} \frac{\eta}{2\theta};
	\end{equation*}
	hence, by the first part of the proof, for each $x \in \rr$ we have (recall that $\lambda = \tilde \lambda$)
	\begin{equation*}
	V^{0^+}_2(x) = 
	\vphi_2^{A_{21}(0^+), A_{22}(0^+)} (\tilde s) - \lambda (x - \tilde s) = \frac{s_2- \tilde s}{\rho} - \lambda (x - \tilde s).
	\end{equation*}
	The corresponding result for $V_1$ follows by symmetry.
\end{proof}

{\color{black} 

\begin{prop}
	Assume $\tilde c =0$ and $\lambda > \tilde \lambda$. Then we have
	\begin{equation*}
	\bar x_1(0^+) = x^*_1(0^+) = \tilde s - \zeta < \tilde s + \zeta = x^*_2(0^+) = \bar x_2(0^+), \qquad \zeta =\frac{1}{\theta} \log\left(\sqrt{\frac{\lambda - \tilde \lambda}{2 \eta}+1} + \sqrt{\frac{\lambda - \tilde \lambda}{2 \eta}}\right) >0.
	\end{equation*}
\end{prop}

\begin{proof}
	The result immediately follows by \eqref{explicit} and \eqref{limxi}.
\end{proof}

The case $\lambda = \tilde \lambda$ corresponds to the intuition above, with $\bar x_i(0^+) = x^*_i(0^+) = \tilde s$, for $i \in \{1,2\}$. Conversely, in the case $\lambda > \tilde \lambda$ we get $\bar x_1(0^+) = x^*_1(0^+)$  and $\bar x_2(0^+) = x^*_2(0^+)$, but the two values, quite surprisingly, do not coincide. We remark that we here have a non-trivial mutual continuation region $]\bar x_1,\bar x_2[ \,=\, ]\tilde s - \zeta, \tilde s + \zeta[$. Practically, when the process exits from such a region, one of the players moves the process to one of the two boundaries, then she continuously intervenes to keep the process in that state, until the Brownian motion points inwards the continuation region. Then, the game goes on. 


}

\paragraph{Limits as $c \to +\infty$.} If the intervention cost increases, the players rarely intervene. In the limit case $c \to +\infty$, they never intervene and we expect $]\bar x_1(c), \bar x_2(c)[$ to coincide with $\rr$. Correspondingly, the state variable diffuses without being affected by the players, that is $X_s = x + \sigma W_s$, for each $s\geq 0$. As a consequence, we guess that the equilibrium payoff function for player 2 is 
\begin{equation*}
V_2^{+\infty}(x) = 
\eee \bigg[ \int_0^{\infty} e^{-\rho s} (s_2 - x - \sigma W_s) ds \bigg] = \frac{s_2 - x}{\rho}.
\end{equation*}
Moreover, the intervening player clearly compensates the cost $c + \lambda|x^*_i -x|$ by moving the process to a state where her payoff is bigger than the opponent's one. In the case $c \to + \infty$, the intervening player has to compensate diverging costs, so that we guess that $x^*_i(c)$ diverges too. We now rigorously prove our guesses. 

\begin{prop}
	The following limits hold: 
	\begin{gather*}
	\bar x_2(+\infty) = x^*_1(+\infty) = + \infty,
	\qquad\qquad
	\bar x_1(+\infty) = x^*_2(+\infty) = - \infty,
	\\
	V^{+\infty}_1 (x) = \frac{x - s_1}{\rho},
	\qquad\qquad
	V^{+\infty}_2 (x) = \frac{s_2 - x}{\rho}.
	\end{gather*}
\end{prop}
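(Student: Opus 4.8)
The plan is to read everything off the explicit formulas \eqref{explicit} together with the limit relations \eqref{limxi}, so that the whole argument reduces to resolving a couple of indeterminate forms. The starting observations are that, as $c \to +\infty$, we have $\xi(c) \to \eta$ by \eqref{limxi}, and that the quantity $\Gamma = \Gamma(c)$ in \eqref{coeffexplicit} satisfies $\Gamma(c) \to +\infty$: since $\xi$ stays bounded away from $0$ for large $c$ (it tends to $\eta>0$), the first term of $\Gamma$ dominates and diverges, and at the same time every term is at most linear in $c$, giving an upper bound $\Gamma(c) \le C c$ for a constant $C>0$ and all large $c$. The single quantitative input I will use repeatedly is the fourth limit in \eqref{limxi}, namely $c(\eta - \xi(c)) \to 0$.

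For the thresholds I would first treat $\bar x_2$ and $x^*_1$, then invoke the symmetry relations $\bar x_1 = 2\tilde s - \bar x_2$ and $x^*_2 = 2\tilde s - x^*_1$. In the formula for $\bar x_2$ (the case $i=2$), both factors $\sqrt{(\eta+\xi)/(\eta-\xi)}$ and $\sqrt{\Gamma+1}+\sqrt{\Gamma}$ inside the logarithm diverge, so its argument tends to $+\infty$ and hence $\bar x_2(+\infty)=+\infty$. The formula for $x^*_1$ (the case $i=1$) carries the opposite sign before the logarithm and the reciprocal factor $\sqrt{(\eta-\xi)/(\eta+\xi)}$, which vanishes; this is the genuinely delicate point, an apparent $0\cdot\infty$. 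Here I would estimate, for large $c$,
\[
\sqrt{\tfrac{\eta-\xi}{\eta+\xi}}\,\bigl(\sqrt{\Gamma+1}+\sqrt{\Gamma}\bigr) \le \frac{2\sqrt{Cc+1}}{\sqrt{\eta+\xi}}\,\sqrt{\eta-\xi} \le \text{const}\cdot\sqrt{c(\eta-\xi)} \longrightarrow 0,
\]
using $\Gamma\le Cc$ and the boundedness of $\eta+\xi$ from below; since the bracket tends to $0$, the logarithm tends to $-\infty$, and the leading minus sign gives $x^*_1(+\infty)=+\infty$. The symmetry relations then yield $\bar x_1(+\infty)=x^*_2(+\infty)=-\infty$.

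For the value functions I would fix $x\in\rr$ and note that, since $\bar x_1(c)\to-\infty$ and $\bar x_2(c)\to+\infty$, for all $c$ large enough $x$ lies in the continuation region $]\bar x_1(c),\bar x_2(c)[$, on which $V_2^c(x)=\vphi_2^{A_{21}(c),A_{22}(c)}(x)=A_{21}(c)e^{\theta x}+A_{22}(c)e^{-\theta x}+(s_2-x)/\rho$ by \eqref{defxi}. It then suffices to prove $A_{21}(c),A_{22}(c)\to 0$. From \eqref{explicit}, $A_{21}(c)$ carries the factor $\sqrt{\eta^2-\xi^2}\,(\sqrt{\Gamma+1}-\sqrt{\Gamma})$; since $\sqrt{\eta^2-\xi^2}\to 0$ while $\sqrt{\Gamma+1}-\sqrt{\Gamma}=(\sqrt{\Gamma+1}+\sqrt{\Gamma})^{-1}\le 1$, it tends to $0$ at once. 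The coefficient $A_{22}(c)$ is again the delicate term, of the shape $\sqrt{\eta^2-\xi^2}\,(\sqrt{\Gamma+1}+\sqrt{\Gamma})$; bounding $\sqrt{\eta^2-\xi^2}\le\sqrt{2\eta}\,\sqrt{\eta-\xi}$ and $\sqrt{\Gamma+1}+\sqrt{\Gamma}\le 2\sqrt{Cc+1}$ gives $|A_{22}(c)|\le\text{const}\cdot\sqrt{c(\eta-\xi)}\to 0$ by \eqref{limxi}. Hence $V_2^c(x)\to(s_2-x)/\rho=V_2^{+\infty}(x)$.

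Finally, the statement for player $1$ follows from the symmetry relation $V_1^c(x)=V_2^c(2\tilde s-x)$: passing to the limit gives $V_1^{+\infty}(x)=(s_2-(2\tilde s-x))/\rho=(x-s_1)/\rho$, using $2\tilde s=s_1+s_2$. The only real obstacle throughout is the reappearing $0\cdot\infty$ indeterminacy, in $x^*_1$ and in $A_{22}(c)$; in both cases it is defused by the same mechanism, since each offending product is dominated by $\sqrt{c(\eta-\xi(c))}$, which vanishes by the last limit in \eqref{limxi}.
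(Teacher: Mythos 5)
Your proof is correct and follows essentially the same route as the paper: read all limits off the explicit formulas \eqref{explicit} together with \eqref{limxi}, then transfer to the other player by symmetry. The only difference is that the paper compresses the whole threshold computation into ``it easily follows'', while you make explicit the resolution of the $0\cdot\infty$ indeterminacies (in $x^*_1$, or equivalently $x^*_2$, and in $A_{22}$) via the bound $\Gamma(c)\le Cc$ and the key limit $c(\eta-\xi(c))\to 0$, which is exactly the mechanism the paper's terse proof relies on.
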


\begin{proof}
	By \eqref{explicit} and \eqref{limxi} it easily follows that $\bar x_2(+\infty) = +\infty$ and $x^*_2(+\infty) = -\infty$. By symmetry, corresponding results hold for $\bar x_1, x^*_1$. Moreover, by \eqref{explicit} and \eqref{limxi} we get 
	\begin{equation*}
	A_{21}(+\infty) = A_{21}(+\infty) = 0;
	\end{equation*}
	hence, by the first part of this proof, for each $x \in \rr$ we have
	\begin{equation*}
	V^{+\infty}_2(x) = \vphi_2^{A_{21}(+\infty), A_{22}(+\infty)} (x)= \frac{s_2 - x}{\rho}. 
	\end{equation*}
	The corresponding result for $V_1$ follows by symmetry.
\end{proof}

\paragraph{Monotonicity of $\bar x_i, x^*_i$.} If the intervention cost $c$ increases, we expect the common continuation region $]\bar x_1(c), \bar x_2(c)[$ to enlarge, since the players are less willing to intervene. Proposition \ref{PropMon} makes this guess rigorous.

\begin{prop}
	\label{PropMon}
	The function $c \mapsto \bar x_2(c)$, with $c \in ]\tilde c, +\infty[$, is increasing and the function $c \mapsto \bar x_1(c)$ is decreasing.
\end{prop}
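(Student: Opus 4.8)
The plan is to reduce the statement to the monotonicity of a single explicit scalar quantity and then to an elementary one-variable inequality. By the symmetry relation $\bar x_1 = 2\tilde s - \bar x_2$ recalled before the statement, it suffices to prove that $c \mapsto \bar x_2(c)$ is increasing on $]\tilde c,+\infty[$; the monotonicity of $\bar x_1$ is then automatic. By the explicit formula \eqref{explicit} and $\theta>0$,
\[
\bar x_2(c) = \tilde s + \frac1\theta\,\log\!\left[\sqrt{\tfrac{\eta+\xi}{\eta-\xi}}\,\bigl(\sqrt{\Gamma+1}+\sqrt{\Gamma}\bigr)\right],
\]
so, $\log$ being increasing, it is enough to show that the bracketed expression increases with $c$. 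I would do this by showing that each of the two positive factors increases, since the product of positive increasing functions is increasing. For the first factor, \eqref{derivXi} gives $\xi'(c)=\tfrac\theta2(\eta^2-\xi^2)/\xi^2>0$ (recall $\xi\in(0,\eta)$), hence $\xi$ increases and so does $\sqrt{(\eta+\xi)/(\eta-\xi)}$. Note also that $\Gamma>0$ on $]\tilde c,+\infty[$, so $\sqrt{\Gamma+1}+\sqrt{\Gamma}$ is a genuine increasing function of $\Gamma$.

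The crux is therefore to prove $\Gamma'(c)>0$. Writing, from \eqref{coeffexplicit}, $\Gamma = \tfrac{\theta P}{4\xi}+\tfrac{\lambda-\tilde\lambda}{2\eta}$ with $P(c)=(c-\tilde c)+\tfrac{c(\lambda-\tilde\lambda)}{\eta}$ and $\kappa:=P'(c)=1+\tfrac{\lambda-\tilde\lambda}{\eta}>0$, differentiation gives $\Gamma'(c)=\tfrac{\theta}{4\xi^2}\bigl(\kappa\xi-P\xi'\bigr)$. Substituting $\xi'$ from \eqref{derivXi}, the inequality $\Gamma'(c)>0$ becomes
\[
2\kappa\,\xi^3 > \theta P\,(\eta^2-\xi^2).
\]

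To handle this I would eliminate $c$ using the defining equation $F(\xi)=0$ from \eqref{DefF}, i.e. $\theta c = \eta\log\tfrac{\eta+\xi}{\eta-\xi}-2\xi$. Since $P=\kappa c-\tilde c$ and $\tilde c\ge0$, this yields $\theta P = \kappa\bigl(\eta\log\tfrac{\eta+\xi}{\eta-\xi}-2\xi\bigr)-\theta\tilde c \le \kappa\bigl(\eta\log\tfrac{\eta+\xi}{\eta-\xi}-2\xi\bigr)$, so it suffices to prove the $\kappa$-free inequality $2\xi^3 > \bigl(\eta\log\tfrac{\eta+\xi}{\eta-\xi}-2\xi\bigr)(\eta^2-\xi^2)$. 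Expanding and cancelling the $2\xi^3$ terms, this is equivalent to
\[
2\xi\eta > (\eta^2-\xi^2)\,\log\frac{\eta+\xi}{\eta-\xi}.
\]
Setting $u=\xi/\eta\in(0,1)$, this is precisely $h(u)>0$ for $h(u):=2u-(1-u^2)\log\tfrac{1+u}{1-u}$. I would close the argument by noting $h(0)=0$ and computing $h'(u)=2u\log\tfrac{1+u}{1-u}>0$ on $(0,1)$, so that $h>0$ there. Unwinding the implications gives $\Gamma'(c)>0$, hence $\bar x_2$ increasing and $\bar x_1$ decreasing.

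I expect the step $\Gamma'(c)>0$ to be the main obstacle: $\Gamma$ combines an increasing numerator $P$ with an increasing denominator $\xi$, and crude bounds (e.g.~replacing $\sqrt{\Gamma(\Gamma+1)}$ by $\Gamma$ in the logarithmic derivative of the bracket) fail for small $c$, where $\xi\sim c^{1/3}$ and the two competing terms are of the same order. The decisive simplification is to trade the dependence on $c$ for the relation $F(\xi)=0$, reducing matters to the self-contained inequality $h(u)>0$, whose proof is immediate.
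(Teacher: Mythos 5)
Your proof is correct, and its skeleton coincides with the paper's: symmetry reduces everything to $\bar x_2$, the explicit formula \eqref{explicit} reduces monotonicity of $\bar x_2(c)$ to monotonicity of the bracketed expression, and since $\xi'>0$ makes the factor $\sqrt{(\eta+\xi)/(\eta-\xi)}$ increasing, the crux in both arguments is that $\Gamma$ is increasing in $c$. Where you genuinely diverge is in how this crux is established. The paper isolates the term $c/\xi(c)$ (the term $-\theta\tilde c/(4\xi)$ and the constant being trivially nondecreasing) and proves $c\mapsto c/\xi(c)$ increasing by a comparison argument in the variable $c$: the function $H(c):=\xi(c)-c\,\xi'(c)$ satisfies $H(0^+)=0$ by \eqref{limxi} and $H'(c)=-c\,\xi''(c)>0$ by \eqref{derivXi}, hence $H\geq 0$. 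You instead differentiate $\Gamma$ directly, substitute $\xi'$ from \eqref{derivXi}, and then eliminate $c$ through the defining relation $F(\xi)=0$, i.e.\ $\theta c=\eta\log\frac{\eta+\xi}{\eta-\xi}-2\xi$, discarding the $\tilde c$-term via $\tilde c\geq 0$; this reduces the claim to the self-contained scalar inequality $2u>(1-u^2)\log\frac{1+u}{1-u}$ on $(0,1)$, settled by $h(0)=0$ and $h'(u)=2u\log\frac{1+u}{1-u}>0$. The two key lemmas are in fact equivalent: $H(c)\geq 0$ reads $2\xi^3\geq\theta c(\eta^2-\xi^2)$, which after substituting $\theta c$ is exactly your inequality $h(\xi/\eta)\geq 0$, so you have given a different proof of the same fact. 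What your route buys: it uses only $\xi'>0$ and the defining equation of $\xi$, not the second-derivative formula nor the limits in \eqref{limxi}; in particular it sidesteps the paper's slightly delicate step $H(\tilde c^+)=0$, which for $\tilde c>0$ implicitly requires extending $H$ to all of $]0,+\infty[$ and invoking $H(0^+)=0$ there, whereas your argument works pointwise at each $c>\tilde c$. What the paper's route buys: it is shorter, since \eqref{derivXi} and \eqref{limxi} are already on record.
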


\begin{proof}
	Let us prove that $c \mapsto \bar x_2 (c)$, with $c \in ]\tilde c, +\infty[$, is increasing. By \eqref{explicit} it suffices to check that
	\begin{equation*}
	c \mapsto \frac{\eta + \xi(c)}{\eta - \xi(c)} \left(\frac{\theta(\lambda - \tilde \lambda + \eta)}{4\eta}\frac{c}{\xi(c)}- \frac{\theta\tilde c}{4}\frac{1}{\xi(c)} + \frac{\lambda- \tilde \lambda}{2 \eta}\right)
	\end{equation*}
	is an increasing function. Since $\xi'>0$, a sufficient condition is that
	\begin{equation*}
	c \mapsto \frac{c}{\xi(c)}, \qquad c>\tilde c,
	\end{equation*}
	is increasing, that is 
	\begin{equation*}
	H(c)= \xi(c) - c \xi'(c) \geq 0, \qquad \forall c>\tilde c,
	\end{equation*}
	which is true since by \eqref{limxi} we have $H(\tilde c^+)=0$ (consider separately the cases $\tilde c =0$ and $\tilde c >0$) and $H'(c)>0$. The result for $\bar x_1(c)$ follows by symmetry.
\end{proof}

As for the monotonicity of $x^*_i$, it is not easy to make a guess. The formulas in \eqref{explicit} do not allow easy estimates; however, a monotonicity result can be proved in the case $\tilde c=0$. We will see later by some numerical simulations that in the general case the function $x^*_i$ is not monotone.

\begin{prop}
	Assume $\tilde c = 0$. Then, the function $c \mapsto x^*_2(c)$, with $c \in ]0, +\infty[$, is decreasing and the function $c \mapsto x^*_1(c)$ is increasing. Moreover, we have $x^*_2 < \tilde s < x^*_1$ for each $c>0$.
\end{prop}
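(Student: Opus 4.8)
The plan is to reduce both assertions to one-variable statements. By the symmetry relations in \eqref{symmetry} we have $x^*_1 = 2\tilde s - x^*_2$, so that $x^*_1 > \tilde s$ is equivalent to $x^*_2 < \tilde s$, and $c \mapsto x^*_1$ is increasing iff $c \mapsto x^*_2$ is decreasing. Hence it suffices to prove that $x^*_2(c) < \tilde s$ for every $c>0$ and that $c \mapsto x^*_2(c)$ is decreasing. I would then pass to the variable $y^* := e^{\theta(x^*_2 - \tilde s)}$, which is exactly the reduced variable of \eqref{Eq0}: since $\theta>0$, we have $x^*_2 < \tilde s \iff y^* < 1$, and $c\mapsto x^*_2$ is decreasing iff $c \mapsto y^*$ is decreasing.

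The key simplification is that $y^*$ depends on $c$ only through $\xi$. Indeed, the defining relation $F(\xi)=0$ of \eqref{DefF} reads $\theta c = \eta\log\frac{\eta+\xi}{\eta-\xi} - 2\xi$, whose $\xi$-derivative equals $2\xi^2/(\eta^2-\xi^2)>0$; thus $c \mapsto \xi$ is a strictly increasing bijection from $]0,+\infty[$ onto $]0,\eta[$, consistent with $\xi'>0$ in \eqref{derivXi} and the limits in \eqref{limxi}. Substituting this expression for $\theta c$ into the formula for $\Gamma$ in \eqref{coeffexplicit} (with $\tilde c=0$) eliminates the explicit $c$ and exhibits $\Gamma$, hence $y^* = \sqrt{(\eta-\xi)/(\eta+\xi)}\,(\sqrt{\Gamma+1}+\sqrt{\Gamma})$, as a function of $\xi$ alone. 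Writing $t=\xi/\eta\in(0,1)$, $L=\log\frac{1+t}{1-t}$ and $\mu=(\lambda-\tilde\lambda)/\eta\ge 0$, one obtains the closed forms $\Gamma = \frac{(1+\mu)(L-2t)}{4t}+\frac{\mu}{2}$ and $y^* = \sqrt{\frac{1-t}{1+t}}\,(\sqrt{\Gamma+1}+\sqrt{\Gamma})$. Because $c \mapsto t$ is increasing, both claims become statements about the single function $t\mapsto y^*(t)$ on $(0,1)$, namely $y^*(t)<1$ and $y^*$ decreasing.

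For the sign, solving $\sqrt{\Gamma+1}+\sqrt{\Gamma}=\sqrt{(1+t)/(1-t)}$ yields the threshold $\Gamma_{\mathrm{crit}}=t^2/(1-t^2)$ (one checks $\Gamma_{\mathrm{crit}}+1=1/(1-t^2)$), and since $\Gamma\mapsto\sqrt{\Gamma+1}+\sqrt{\Gamma}$ is increasing we get $y^*<1 \iff \Gamma<\Gamma_{\mathrm{crit}}$, which after clearing denominators is the elementary inequality $(1-t^2)\big[(1+\mu)L-2t\big]<4t^3$. For the monotonicity, differentiating gives $\frac{d}{dt}\log y^* = -\frac{1}{1-t^2}+\frac{\dot\Gamma}{2\sqrt{\Gamma(\Gamma+1)}}$ with $\dot\Gamma=\frac{1+\mu}{4t^2}\big(\frac{2t}{1-t^2}-L\big)$, so the required $\frac{d}{dt}\log y^*<0$ is equivalent to $(1-t^2)\,\dot\Gamma<2\sqrt{\Gamma(\Gamma+1)}$. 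Both inequalities are then attacked through the power series $L=2\sum_{k\ge0}t^{2k+1}/(2k+1)$, which gives $L-2t=\sum_{k\ge1}\frac{2}{2k+1}t^{2k+1}>0$ and $\frac{2t}{1-t^2}-L=\sum_{k\ge1}\frac{4k}{2k+1}t^{2k+1}>0$ (so in particular $\dot\Gamma>0$ and $(1-t^2)L<2t$), after which one compares the two sides term by term; the computation is cleanest in the symmetric case $\lambda=\tilde\lambda$, i.e.\ $\mu=0$, where $\sqrt{\Gamma(\Gamma+1)}=\sqrt{L^2-4t^2}/(4t)$ makes both reduced inequalities rational in $t$ and $L$. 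Finally, $x^*_2<\tilde s<x^*_1$ follows by combining $y^*<1$ with the symmetry $x^*_1=2\tilde s-x^*_2$.

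I expect the main obstacle to be precisely these two transcendental inequalities: the logarithm $L$ must be bounded tightly against the polynomial right-hand sides, so a crude estimate such as $\sqrt{\Gamma(\Gamma+1)}>\Gamma$ is too lossy and one is forced to exploit the exact series of $L$ together with the positivity and sign pattern of $H(c)=\xi-c\xi'$ established in the proof of Proposition \ref{PropMon}. The dependence on $\mu$ is the delicate point, so care is needed to keep the estimates controlled over the admissible range of $(\lambda,\tilde\lambda)$; the reduction to $t\mapsto y^*(t)$ is what makes this bookkeeping tractable.
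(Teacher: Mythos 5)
Your reductions are all correct: the symmetry step, the passage to $y^*=e^{\theta(x^*_2-\tilde s)}$, the elimination of $c$ through $\theta c=\eta(L-2t)$, the closed forms $\Gamma=\frac{(1+\mu)(L-2t)}{4t}+\frac{\mu}{2}$ and $\Gamma_{\mathrm{crit}}=\frac{t^2}{1-t^2}$, and the identity $\frac{d}{dt}\log y^*=-\frac{1}{1-t^2}+\frac{\dot\Gamma}{2\sqrt{\Gamma(\Gamma+1)}}$. This is also a genuinely different route from the paper's: the paper never differentiates $\log y^*$; starting from \eqref{explicit} it argues that it suffices for $c\mapsto\frac{\eta-\xi}{\eta+\xi}\,\Gamma$ to be decreasing, reduces this (using $\xi'>0$) to the decrease of $c\mapsto\frac{c(\eta-\xi)}{\xi(\eta+\xi)}$, i.e.\ to $K(c):=\xi-\frac{\theta}{2}\frac{c(\eta^2-\xi^2)}{\xi^2}-\theta\eta\frac{c}{\xi}\le 0$, asserted via $K(0^+)=0$ and $K'<0$; the inequalities $x^*_2<\tilde s<x^*_1$ are then deduced from the monotonicity together with $x^*_i(0^+)=\tilde s$.

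There are, however, two genuine gaps in your proposal, one of them fatal as written. First, the argument stops exactly where the content of the proposition lies: neither of your two final inequalities is proved, and ``compare term by term'' is not routine here. For instance, for $\mu=0$ the monotonicity inequality, once squared, reads $4t^2+16t^4-4t(1-t^2)L+(1-6t^2+t^4)L^2<0$, and the coefficient of $L^2$ changes sign at $t=\sqrt{2}-1$, so no naive termwise comparison of series is available and a real case analysis is needed. Second, and decisively, your sign inequality $(1-t^2)\bigl[(1+\mu)L-2t\bigr]<4t^3$ is \emph{false} for every $\mu>0$: since $L=2t+\tfrac{2}{3}t^3+O(t^5)$, its left-hand side is $2\mu t+O(t^3)$, which dominates $4t^3$ for all small $t$; equivalently, $\Gamma(0^+)=\mu/2$ forces $y^*(0^+)=\sqrt{1+\mu/2}+\sqrt{\mu/2}>1$, i.e.\ $x^*_2(0^+)>\tilde s$. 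Hence no bound on $L$, however tight, can complete your plan when $\lambda>\tilde\lambda$: the conclusion $x^*_2<\tilde s<x^*_1$ genuinely requires $\lambda=\tilde\lambda$, a hypothesis the paper uses tacitly, since $x^*_i(0^+)=\tilde s$ was established in the $c\to0^+$ paragraph only under that extra assumption. A sound submission had either to prove the two inequalities in the case $\mu=0$ and flag the missing hypothesis, or to notice the counterexample that your own reduction makes visible. (A mitigating remark: the same expansion gives $K=\tfrac{2}{3}\eta t(1-t)+O(t^3)>0$ for small $c$, so the paper's sufficient condition also fails near $c=0^+$; the decrease of $y^*$ truly comes from the competition between $\sqrt{u}$ and $\sqrt{\Gamma+1}+\sqrt{\Gamma}$, which is exactly what your $\log$-derivative identity isolates. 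Your skeleton is therefore the right one --- it is simply not carried out.)
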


\begin{proof}
	Let us prove that $c \mapsto x^*_2 (c)$, with $c \in ]0, +\infty[$, is decreasing. By \eqref{explicit} it suffices to prove that
	\begin{equation*}
	c \mapsto \frac{\theta(\lambda - \tilde \lambda + \eta)}{4\eta}\frac{c(\eta - \xi(c))}{\xi(c) (\eta + \xi(c))} + \frac{\lambda- \tilde \lambda}{2 \eta}\frac{\eta - \xi(c)}{\eta + \xi(c)}
	\end{equation*}
	is a decreasing function. Since $\xi'>0$, a sufficient condition is that
	\begin{equation*}
	c \mapsto \frac{c(\eta - \xi(c))}{\xi(c) (\eta + \xi(c))}
	\end{equation*}
	is decreasing, that is
	\begin{equation*}
	K(c)= \xi(c) - \frac{\theta}{2} \frac{c(\eta^2 -\xi^2(c))}{\xi^2(c)} - \theta \eta \frac{c}{\xi(c)}\leq 0, \qquad \forall c>0,
	\end{equation*}
	which is true since by \eqref{limxi} we have $K(0^+)=0$ and $K'(c)<0$. The results for $x^*_1(c)$ follows by symmetry. Finally, we get the inequalities by $(x^*_2)'<0 < (x^*_1)'$ and $x^*_1(0^+) = x^*_2(0^+) = \tilde s$.
\end{proof}

\paragraph{Limits as $c \to \tilde c^+$.} We conclude this section with the behaviour as $c \to \tilde c^+$ in the case $\lambda = \tilde \lambda$, i.e.~when each intervention practically becomes a transfer of money from the intervening player to the opponent. It is not easy to guess what happens in this case and the result is quite surprising: the limiting strategies are not admissible.

\begin{prop}
	Assume $\lambda = \tilde \lambda$. For $i,j \in \{1,2\}$ with $i \neq j$, the following limits hold:
	\begin{equation*}
	\bar x_i(\tilde c^+) = x^*_j(\tilde c^+) = \tilde s + \frac{(-1)^i}{2\theta}\log \left(\frac{\eta + \xi(\tilde c)}{\eta - \xi(\tilde c)} \right).
	\end{equation*}
\end{prop}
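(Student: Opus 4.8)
The plan is to pass to the limit $c\to\tilde c^+$ directly in the closed-form expressions \eqref{explicit}, exploiting the fact that the whole bracketed term collapses once $\lambda=\tilde\lambda$. First I would substitute $\lambda=\tilde\lambda$ into the definition of $\Gamma$ in \eqref{coeffexplicit}: the last two summands vanish, leaving $\Gamma=\theta(c-\tilde c)/(4\xi(c))$. The heart of the argument is then to show $\Gamma\to 0$ as $c\to\tilde c^+$; granting this, $\sqrt{\Gamma+1}+\sqrt{\Gamma}\to 1$, so the bracketed factors in \eqref{explicit} tend to $\sqrt{(\eta+\xi(\tilde c))/(\eta-\xi(\tilde c))}$ and to its reciprocal, respectively.

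To prove $\Gamma\to 0$ I would split into two cases according to the sign of $\tilde c$. If $\tilde c>0$, then $\xi$ is continuous at $\tilde c$ with $\xi(\tilde c)\in(0,\eta)$ the genuine zero of $F$ in \eqref{DefF}, so the numerator $c-\tilde c$ vanishes while the denominator $\xi(c)$ stays bounded away from $0$, and $\Gamma\to 0$ at once. The delicate case is $\tilde c=0$, where $\xi(\tilde c)=\xi(0^+)=0$ by \eqref{limxi} and $\Gamma$ is an indeterminate form $0/0$. Here I would write $\Gamma=(\theta/4)\,c/\xi(c)$ and invoke the limit $\lim_{c\to 0^+} c/\xi(c)=0$ already recorded in \eqref{limxi}, which immediately gives $\Gamma\to 0$; the rate $\xi(c)\sim(\tfrac{3}{2}\theta\eta^2 c)^{1/3}$ can alternatively be read off from the ODE \eqref{derivXi} if an explicit estimate is desired, but the recorded limit makes this unnecessary.

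With $\Gamma\to 0$ established, taking limits in \eqref{explicit} yields
\begin{equation*}
\bar x_i(\tilde c^+) = \tilde s + \frac{(-1)^i}{2\theta}\log\left(\frac{\eta+\xi(\tilde c)}{\eta-\xi(\tilde c)}\right),
\qquad
x^*_i(\tilde c^+) = \tilde s + \frac{(-1)^{i+1}}{2\theta}\log\left(\frac{\eta+\xi(\tilde c)}{\eta-\xi(\tilde c)}\right),
\end{equation*}
where I have used $\log\sqrt{t}=\tfrac12\log t$ and $-(-1)^i=(-1)^{i+1}$. The final step is a parity check: for $j\neq i$ one has $(-1)^{j+1}=(-1)^i$, so the expression for $x^*_j(\tilde c^+)$ is identical to that for $\bar x_i(\tilde c^+)$, which is exactly the claimed identity $\bar x_i(\tilde c^+)=x^*_j(\tilde c^+)$. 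I expect the only real obstacle to be the degenerate subcase $\tilde c=0$, in which $c-\tilde c$ and $\xi(c)$ vanish together; the limit $\lim_{c\to0^+}c/\xi(c)=0$ from \eqref{limxi} disposes of it, and everything else is elementary substitution and bookkeeping.
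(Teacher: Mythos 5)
Your proposal is correct and follows exactly the route of the paper, whose proof is the one-line observation that the result "immediately follows by \eqref{explicit} and \eqref{limxi}": you have simply filled in the details, namely that $\lambda=\tilde\lambda$ reduces $\Gamma$ to $\theta(c-\tilde c)/(4\xi(c))$, that $\Gamma\to 0$ in both cases $\tilde c>0$ (continuity of $\xi$) and $\tilde c=0$ (the limit $c/\xi(c)\to 0$ from \eqref{limxi}), and the final parity bookkeeping identifying $x^*_j$ with $\bar x_i$. No gaps; the expansion is faithful to the paper's intended argument.
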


\begin{proof}
	The result immediately follows by \eqref{explicit} and \eqref{limxi}.
\end{proof}

Essentially, the limit situation is as follows. Let $i,j \in \{1,2\}$ with $i \neq j$; as soon as the process reaches $\bar x_i$, player $i$ moves the process to $x^*_i = \bar x_j$, which is the boundary of the intervention region of player $j$, who moves the process back to $\bar x_i$, thus causing another intervention by player $i$ and so on. We get a infinite sequence of simultaneous interventions, meaning that these strategies are not admissible.

\paragraph{Numerical simulations.} Here, we present the results (obtained with Wolfram Mathematica) of some numerical simulations on the game we have described. We focus on player 2 and consider the following two sets of parameters:
\begin{align*}
&\text{Problem 1: $\rho = 0.02$,  $\sigma = 0.15$, $s_1 = -3$, $s_2 = 3$, $\tilde c = 0$, $\lambda = \tilde \lambda=15$}.
\\
&\text{Problem 2: $\rho = 0.02$, $\sigma = 0.15$, $s_1 = -3$, $s_2 = 3$, $\tilde c = 50$, $\lambda = \tilde \lambda=0$}.
\end{align*}
We here consider the Nash equilibrium corresponding to $\tilde s = (s_1 + s_2)/2 = 0$. 

Figure \ref{PICex1} represents the equilibrium payoff function $x \mapsto V^c_2(x)$ for Problem 1 and $c=100$ (the dashed lines correspond to the three components of the function). Similarly, in Figure \ref{PICex2} we plot the function $x \mapsto V^c_2(x)$ for Problem 2 and $c=100$. In both cases, we notice the $C^1$-pasting in $\bar x_2$, whereas, as noticed in Section \ref{sec:stochImpGame}, the functions are not differentiable in $\bar x_1$. Also, when $\lambda$ is non-zero, the function is unbounded. 

Figure \ref{PICint1} (for Problem 1, with $c \in \,\, ]\tilde c, \infty[ \, = \, ]0, \infty[$) and Figure \ref{PICint2} (for Problem 2, with $c \in \,\, ]\tilde c, \infty[ \, = \, ]50, \infty[$) show the continuation region and the target states: namely, we plot $c \mapsto \bar x_1(c)$ (solid blue line), $\bar x_2(c)$ (solid green line), $x^*_1(c)$ (dashed blue line) and $x^*_2(c)$ (dashed green line). As proved above, the continuation region enlarges as $c$ grows and diverges as $c \to \infty$. Consider the limit case $c \to \tilde c^+$: if $\tilde c=0$, the four parameters converge to the same state $\tilde s$; conversely, in the case $\tilde c >0$, we see that $x^*_1$ (resp.~$x^*_2$) converges to $\bar x_2$ (resp.~$\bar x_1$), which corresponds to an inadmissible game. Also, we notice that $x^*_1$ (resp.~$x^*_2$) is decreasing (resp.~increasing) when $\tilde c =0$, whereas such functions are not monotone in the $\tilde c >0$ case.

We finally consider Problem 1 and the evolution of $x \mapsto V_2^c(x)$ as $c$ grows: Figure \ref{PICevo1} corresponds to $c=0$, Figure \ref{PICevo2} to $c=250$, Figure \ref{PICevo3} to $c=500$. The equilibrium payoff function is a straight line in the limit case $c \to 0^+$, then a bell-shaped curve appears; as $c$ grows, the local maximum moves to the left and the right side of the bell resembles more and more a straight line with slope $1/\rho$, which is actually the limit as $c \to + \infty$.

\begin{figure}[htb]
	\centering
	\begin{minipage}{0.45\textwidth}
		\centering
		\includegraphics[width=\textwidth]{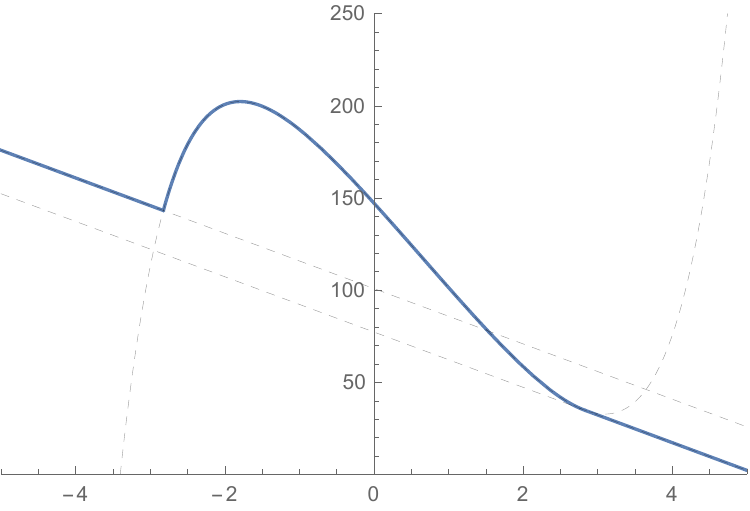}
		\caption{$x \mapsto V_2^c(x)$ for Pr.~1 and $c=100$}
		\label{PICex1}
	\end{minipage}
	\begin{minipage}{0.05\textwidth}
		~
	\end{minipage}
	\begin{minipage}{0.45\textwidth}
		\centering
		\includegraphics[width=\textwidth]{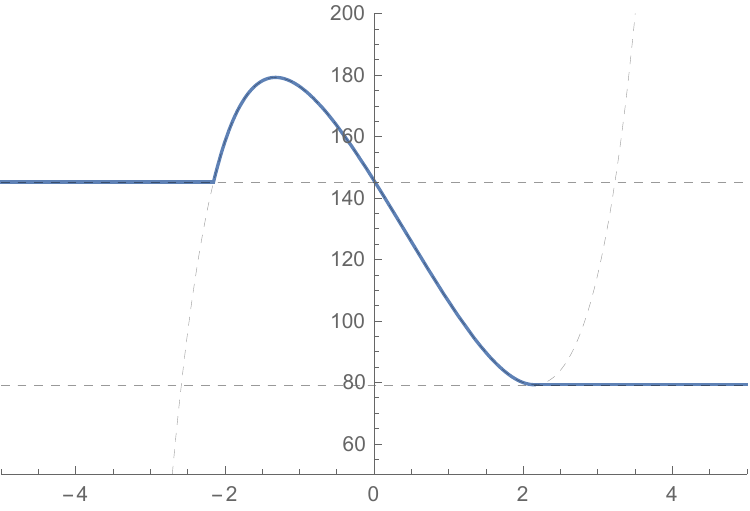}
		\caption{$x \mapsto V_2^c(x)$ for Pr.~2 and $c=100$}
		\label{PICex2}
	\end{minipage}
\end{figure}

\begin{figure}[htb]
	\centering
	\begin{minipage}{0.45\textwidth}
		\centering
		\includegraphics[width=\textwidth]{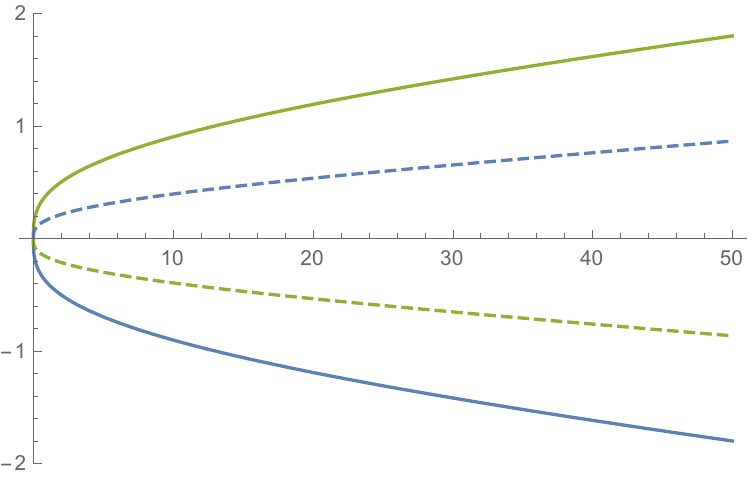}
		\caption{$c \mapsto \bar x_i(c), x^*_i(c)$ for Problem 1}
		\label{PICint1}
	\end{minipage}
	\begin{minipage}{0.05\textwidth}
		~
	\end{minipage}
	\begin{minipage}{0.45\textwidth}
		\centering
		\includegraphics[width=\textwidth]{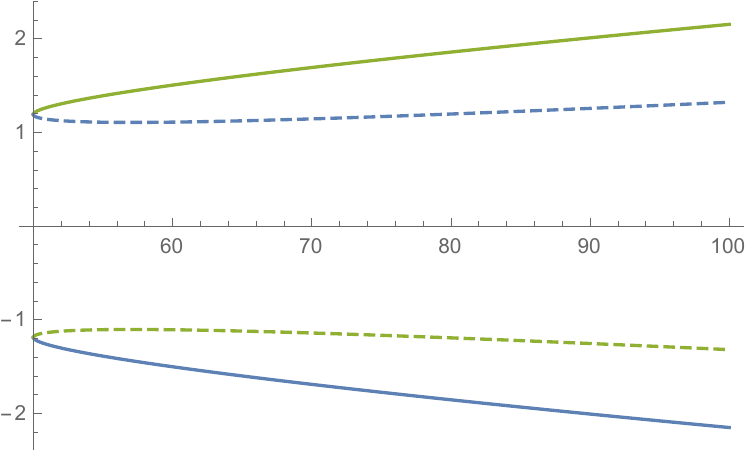}
		\caption{$c \mapsto \bar x_i(c), x^*_i(c)$ for Problem 2}
		\label{PICint2}
	\end{minipage}
	\vspace{0.5cm} 
\end{figure}

\begin{figure}[!htb]
	\centering
	\begin{minipage}{0.32\textwidth}
		\centering
		\includegraphics[width=\textwidth]{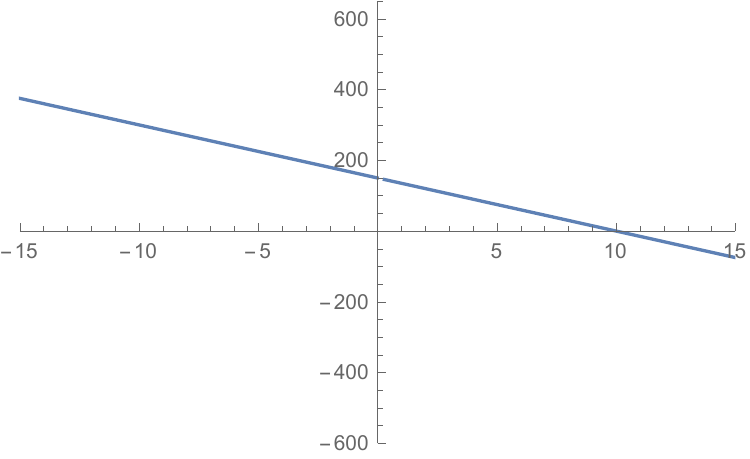}
		\caption{Prob.~1, $c=0$}
		\label{PICevo1}
	\end{minipage}
	\begin{minipage}{0.32\textwidth}
		\centering
		\includegraphics[width=\textwidth]{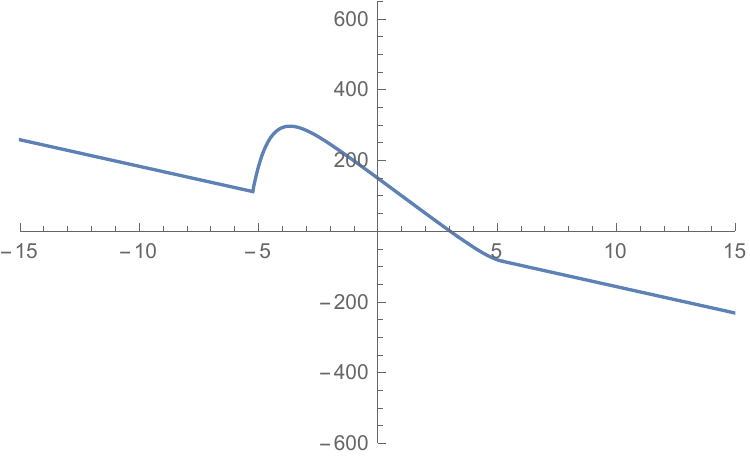}
		\caption{Prob.~1, $c=250$}
		\label{PICevo2}
	\end{minipage}
	\begin{minipage}{0.32\textwidth}
		\centering
		\includegraphics[width=\textwidth]{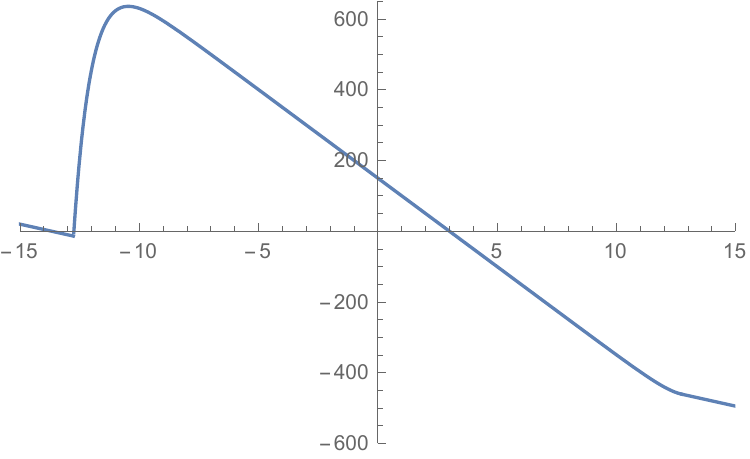}
		\caption{Prob.~1, $c=500$}
		\label{PICevo3}
	\end{minipage}
\end{figure}

\begin{remark}
{\em
	Recall that the family of Nash equilibria in Proposition 	\ref{prop:checkverification} is parametrized by $\tilde s \in \rr$. In this section, we have focused on the properties of $V_1,V_2$ with respect to the cost parameter $c>0$, assuming a fixed value for $\tilde s$. By the formulas in Remark \ref{RemForm}, we also get some properties for the equilibrium payoff functions in the case where $\tilde s$ varies and the other parameters are fixed. To underline the dependence on $\tilde s$, we now write $\bar x_i= \bar x_i(\tilde s)$ and $V_i=V_i^{\tilde s}$. As $\tilde s \in \rr$ increases, by \eqref{explicit} the common continuation region $]\bar x_1(\tilde s), \bar x_2 (\tilde s)[$ moves to the right, which corresponds to smaller values for Player 2, since $f_2$ is decreasing. Indeed, it is easy to see from Definition \ref{def:candidate} and \eqref{explicit} that for any $x \in \rr$ the function $\tilde s \mapsto V_2^{\tilde s}(x)$ is decreasing, with limits $V_2^{+\infty}(x)=-\infty$ and $V_2^{-\infty}(x)=+\infty$. Similar results hold for $V_1^{\tilde s}$.
}
\end{remark}

\subsection{Further examples}
\label{ssec:moreexamples}

The arguments in Sections \ref{sec:candidate} and \ref{sec:applying} can be easily adapted to other problems: we here provide some further examples. Clearly, in most cases one has to deal with the full 8-equation system \eqref{OrdCondTWOPL}-\eqref{syst1}-\eqref{syst2} (the decoupling technique in Proposition \ref{PropEx} is only possible with symmetric payoffs) and the solution has to be found numerically.

\paragraph{Cubic payoffs.} Let us consider the same setting as in Section \ref{sec:thepb}, now with cubic payoffs: namely, we substitute \eqref{ExPayoff} with 
\begin{equation*}
\tilde f_1(x) = c_1(x - s_1)^3, \qquad \tilde f_2(x)= (s_2-x)^3, \qquad s_1<s_2, 
\end{equation*}
for $x \in \rr$, where $c_1$ is a strictly positive constant. 

To find an expression for the Nash equilibrium, we follow the procedure introduced in the previous sections, as follows. First, we solve \eqref{EqEx}, with $f_i$ substituted by $\tilde f_i$: for $x \in \rr$, the solutions are given by
\begin{gather*}
\tilde \vphi_1(x) = A_{11} e^{\q x} +  A_{12} e^{- \q x} + \frac{c_1}{\rho}(x-s_1)^3 + \frac{3c_1\sigma^2}{\rho^2} (x-s_1), 
\\
\tilde \vphi_2 (x) = A_{21} e^{\q x} + A_{22} e^{-\q x} + \frac{1}{\rho}(s_2-x)^3 + \frac{3\sigma^2}{\rho^2} (s_2-x).
\end{gather*}
Then, by the same arguments as in Section \ref{sec:candidate}, a pair of (candidate) equilibrium payoff functions is given by \eqref{Vpratic}, with $\vphi_1,\vphi_2$ substituted by $\tilde \vphi_1,\tilde \vphi_2$. In order to have a well-posed definition, we have to find a solution $(A_{ij}, \bar x_i, x^*_i)_{i,j \in \{1,2\}}$ to the 8-equation system \eqref{OrdCondTWOPL}-\eqref{syst1}-\eqref{syst2}. If $c_1 = 1$, we can apply the symmetry argument in the proof of Proposition \ref{PropEx} and consider a reduced system with four equations. In general, however, we need to deal with the full 8-equation system. In both cases, the solution has to be found numerically. Finally, given a solution to \eqref{OrdCondTWOPL}-\eqref{syst1}-\eqref{syst2}, we have to verify that the candidates actually satisfy all the assumptions of the Verification Theorem \ref{thm:verification}. We proceed as in Section \ref{sec:applying}: the only difference occurs when verifying that 
\begin{equation}
\label{temp}
\begin{gathered}
-\rho \big( \tilde \vphi_1(\bar x_1) - \lambda (\bar x_1-x) \big) + \tilde f_1(x) \leq 0, \qquad \text{$\forall x \in [-\infty, \bar x_1[$,}
\\
-\rho \big( \tilde \vphi_2(\bar x_2) - \lambda (x-\bar x_2) \big) + \tilde f_2(x) \leq 0, \qquad \text{$\forall x \in [\bar x_2, \infty[$.}
\end{gathered}
\end{equation}
Indeed, here we cannot use the monotonicity argument in the proof of Proposition \ref{prop:checkverification}, so that \eqref{temp} has to be checked numerically. Provided that \eqref{temp} holds, we can then conclude that a Nash equilibrium exists: player 1 (resp.~player 2) intervenes when the state variable exits from $]\bar x_1,+\infty[$ (resp.~$]-\infty,\bar x_2[$) and shifts the process to $x^*_1$ (resp.~$x^*_2$). 

As an example, we consider the following values:
\begin{equation*}
\rho = 0.1, \quad
\sigma = 0.2, \quad
c = 60, \quad
\tilde c = 20, \quad
\lambda = \tilde \lambda = 5, \quad
s_1 = -3, \quad
s_2 = 3, \quad
c_1 = 1.2.
\end{equation*}
A solution to \eqref{OrdCondTWOPL}-\eqref{syst1}-\eqref{syst2}, which also satisfies \eqref{temp}, is numerically given by 
\begin{gather*}
A_{11} = -104.943, \qquad
A_{12} = 12.965, \qquad
A_{21}= 24.669, \qquad
A_{22}= -56,001, 
\\ 
x^*_1 = 0.186, \qquad
x^*_2 = -0.453, \qquad
\bar x_1= -0.732, \qquad
\bar x_2 =0.464.
\end{gather*}
Notice that the continuation region $]\bar x_1, \bar x_2[$ is closer to $s_1$ than to $s_2$, which is reasonable: since $c_1>1$, player 1 experiences higher gains and losses if compared to player 2, so that she is more willing to intervene than her opponent, which practically translates into $|\bar x_1 - s_1| < |\bar x_2 - s_2|$.

\paragraph{Linear and cubic payoffs.} Let us consider the same setting as in Section \ref{sec:thepb}, but now player 1 has a cubic payoff and player 2 has a linear payoff: namely, we substitute \eqref{ExPayoff} with 
\begin{equation*}
\hat f_1(x) = c_1(x - s_1)^3, \qquad \hat f_2(x)= s_2-x, \qquad s_1<s_2, 
\end{equation*}
for $x \in \rr$, where $c_1$ is a strictly positive constant. 

As above, a pair of candidate equilibrium payoff functions is given by \eqref{Vpratic}, with $\vphi_1,\vphi_2$ substituted by
\begin{gather*}
\hat\vphi_1(x) = A_{11} e^{\q x} +  A_{12} e^{- \q x} + \frac{c_1}{\rho}(x-s_1)^3 + \frac{3c_1\sigma^2}{\rho^2} (x-s_1), 
\\
\hat\vphi_2 (x) = A_{21} e^{\q x} + A_{22} e^{-\q x} + \frac{1}{\rho}(s_2-x).
\end{gather*}
Provided that a solution $(A_{ij}, \bar x_i, x^*_i)_{i,j \in \{1,2\}}$ to the systems \eqref{OrdCondTWOPL}-\eqref{syst1}-\eqref{syst2} exists (with $\vphi_1,\vphi_2$ substituted by $\hat \vphi_1,\hat \vphi_2$), and that 
\begin{gather}
\label{temp2}
-\rho \big( \hat \vphi_1(\bar x_1) - \lambda (\bar x_1-x) \big) + \hat f_1(x) \leq 0, \qquad \text{$\forall x \in [-\infty, \bar x_1[$,}
\end{gather}
then a Nash equilibrium exists and is described as in the previous example. Notice that in \eqref{temp2} we do not need a sign condition for player 2, since this is implied by \eqref{ExCond}, as in the proof of Proposition \ref{prop:checkverification}. 

As an example, we consider the following values:
\begin{equation*}
\rho = 0.1, \quad
\sigma = 0.2, \quad
c = 10, \quad
\tilde c = 50, \quad
\lambda = \tilde \lambda = 0, \quad
s_1 = -0.5, \quad
s_2 = 0.5, \quad
c_1 =1.
\end{equation*}
A solution to \eqref{OrdCondTWOPL}-\eqref{syst1}-\eqref{syst2}, which also satisfies \eqref{temp2}, is numerically given by 
\begin{gather*}
A_{11} = -4.886, \qquad
A_{12} = 0.739, \qquad
A_{21}= 0.418, \qquad
A_{22}= -0.713, 
\\ 
x^*_1 = 0.752, \qquad
x^*_2 = -0.814, \qquad
\bar x_1= -1.319, \qquad
\bar x_2 = 1.053.
\end{gather*}
For the same reasons as above, we remark that $]\bar x_1, \bar x_2[$ is closer to $s_1$ than to $s_2$.

\begin{remark}
\emph{For the sake of simplicity, in this section we have kept the same dynamics as in the previous section, when none of the players intervenes, that is $dX_s = \sigma dW_s$. However, different equations could be considered, as this would only affect the definition of the coefficient $\theta$ in $\vphi_1,\vphi_2$.}
\end{remark}

\section{Conclusions}
\label{sec:conclusion}

In this paper, we have considered a general two-player nonzero-sum impulse game, whose state variable follows a diffusive dynamics driven by a multi-dimensional Brownian motion. After setting the problem, we have provided a verification theorem giving sufficient conditions in order for the solutions of a suitable system of quasi-variational inequalities to coincide with the payoff functions of the two players at some Nash equilibrium. To the best of our knowledge this result is new to the literature on impulse games and it constitutes the major mathematical contribution of the present paper. As an application, we have provided a solvable one-dimensional impulse game where two players with linear running payoffs can shift a real-valued Brownian motion in order to maximize their objective functions. We have found a family of Nash equilibria and explicitly characterized the corresponding equilibrium strategies. We have also studied some asymptotic properties of the Nash equilibria. As a final contribution, we have considered two further families of examples, with cubic payoffs and with linear and cubic payoffs, where a solution is found numerically.


\appendix

\section{Appendix}
\label{SecApp}

\begin{proof}[Proof of Lemma \ref{lemmaprocess}]
	\textit{Step 1.} We prove that \eqref{pbTEMP} implies \eqref{pbCOSSO}. The only property to be proved is (\ref{pbCOSSO-qvi}). We consider three cases. First, assume $V = \widetilde \mm V$. Since $\aaa V + f \leq 0$ and $\widetilde \mm V - V = 0$, we have $\max \{ \aaa V + f, \widetilde \mm V - V \} =0$, which implies (\ref{pbCOSSO-qvi}) since $\widetilde \hh V - V \geq 0$. Then, assume $\widetilde \mm V < V < \widetilde \hh V$. Since $\aaa V + f = 0$ and $\widetilde \mm V - V < 0$, we have $\max \{ \aaa V + f, \widetilde \mm V - V \} =0$, which implies (\ref{pbCOSSO-qvi}) since $\widetilde \hh V - V > 0$. Finally, assume $V = \widetilde \hh V$. Since $\aaa V + f \geq 0$ and $\widetilde \mm V - V \leq 0$, we have $\max \{ \aaa V + f, \widetilde \mm V - V \} \geq 0$, which implies (\ref{pbCOSSO-qvi}) since $\widetilde \hh V - V = 0$.
	
	\textit{Step 2.} We show that \eqref{pbCOSSO} implies \eqref{pbTEMP}. The only properties to be proved are (\ref{pbTEMP-M}), (\ref{pbTEMP-MH}) and (\ref{pbTEMP-H}). We assume  $\widetilde \mm V < \widetilde \hh V$ (the case $\widetilde \mm V = \widetilde \hh V$ being immediate) and consider three cases. First, assume $V = \widetilde \mm V$. Since $\widetilde \hh V - V > 0$, from (\ref{pbCOSSO-qvi}) it follows that $\max \{ \aaa V + f, 0 \} = 0$, which implies $\aaa V + f \leq 0$. Then, assume $\widetilde \mm V < V < \widetilde \hh V$. Since $\min \{ \max \{ \alpha, \beta \}, \gamma \} \in \{\alpha, \beta, \gamma\}$ for every $\alpha, \beta, \gamma \in \rr$, and since $\widetilde \mm V - V < 0 < \widetilde \hh V - V$, from (\ref{pbCOSSO-qvi}) it follows that $ \aaa V + f = 0$. Finally, assume $V = \widetilde \hh V$. From (\ref{pbCOSSO-qvi}) it follows that $\max \{ \aaa V + f, \widetilde \mm V - V \} \geq 0$, which implies $\aaa V + f \geq 0$ since $\widetilde \mm V - V < 0$.
\end{proof}

\vspace{0.2cm}

{\color{black}
\begin{proof}[Complements to the proof of Proposition \ref{PropEx}.] \textit{(a)} Let us prove that the conditions in \eqref{OrdCondTWOPL} and \eqref{syst2} are equivalent to the system in \eqref{news}. Let $\eta = (1-\lambda\rho)/\rho$ and $\tilde \eta = (1-\tilde \lambda\rho)/\rho$. By the definition of $\vphi_2$ in \eqref{defxi} and the change of variable in \eqref{Eq0}, the order condition \eqref{OrdCondTWOPL} and the system \eqref{syst2} write
\begin{subnumcases}{\label{App1}}
A_1 (y^*)^2 - 2 \eta y^* - A_2 = 0, \label{App1-a} \\ 
A_1 \bar y^2 - 2 \eta \bar y - A_2 = 0, \label{App1-b} \\ 
A_1 \Big(\frac{1}{\bar y} - \frac{1}{y^*}\Big) + A_2 (\bar y - y^*) - 2\theta \tilde c + 2 \tilde \eta \log(\bar y / y^*)=0, \label{App1-c} \\
A_1 (\bar y - y^*) + A_2  \Big(\frac{1}{\bar y} - \frac{1}{y^*}\Big) + 2\theta c - 2 \eta  \log(\bar y / y^*)=0, \label{App1-d}\\
y^*>0, \quad \bar y>0, \quad y^*<\bar y, \quad 1<\bar y y^*, \quad A_1(y^*)^2 + A_2 \leq 0. \label{App1-e}
\end{subnumcases}
In particular, as for the conditions in \eqref{App1-e}: we need $y^*,\bar y>0$ by \eqref{Eq0}, the inequalities $y^*<\bar y$ and $1<\bar y y^*$ correspond to \eqref{OrdCondTWOPL}, the condition $A_1(y^*)^2 + A_2 \leq 0$ corresponds to $\vphi_2''(x^*_2) \leq0$. Now, notice that $A_1(y^*)^2 + A_2 = 2y^*(A_1y^* + \eta)$ by the equation in \eqref{App1-a}. Moreover, by \eqref{App1-a} and \eqref{App1-b} we have 
\begin{equation*}
\frac{1}{y^*} = \frac{A_1}{A_2} y^* - \frac{2\eta}{A_2}, \qquad\qquad
\frac{1}{\bar y} = \frac{A_1}{A_2} \bar y - \frac{2\eta}{A_2},
\end{equation*}
which are well-defined since $A_2 \neq 0$ (indeed, $A_2=0$ and \eqref{App1-a}-\eqref{App1-b} would imply either $y^*=0$ or $\bar y=0$, in contradiction with \eqref{App1-e}). Hence, the system in \eqref{App1} can be rewritten as
\begin{subnumcases}{\label{App2}}
A_1 (y^*)^2 - 2 \eta y^* - A_2 = 0, \label{App2-a} \\ 
A_1 \bar y^2 - 2 \eta \bar y - A_2 = 0, \label{App2-b} \\ 
\frac{A_1^2+A_2^2}{A_2}(\bar y - y^*) - 2\theta \tilde c + 2 \tilde \eta \log(\bar y / y^*) =0, \label{App2-c} \\
A_1 (\bar y - y^*) + \theta c - \eta  \log(\bar y / y^*) =0, \label{App2-d}\\
y^*>0, \quad \bar y>0, \quad y^*<\bar y, \quad 1<\bar y y^*, \quad A_1y^* - \eta \leq 0. \label{App2-e}
\end{subnumcases}
We finally get \eqref{news} by substituting \eqref{App2-c} with the sum of \eqref{App2-c} and \eqref{App2-d}.

\textit{(b)} Let us prove that \eqref{proof5} is equivalent to \eqref{proof6}. By the definition of $\xi$ and \eqref{proof5-b}, we have
\begin{equation*}
\sqrt{\eta^2 + A_1A_2}=\xi,
\end{equation*}
which immediately leads to
\begin{equation*}
A_1A_2=\xi^2-\eta^2 = -M,
\end{equation*}
with $M>0$ as in \eqref{proof6}. Notice that $\eta^2+A_1A_2 >0$ is then always verified. Moreover, by using \eqref{proof5-b} to rewrite the logarithm and by the relations $\sqrt{\eta^2 + A_1A_2}=\xi$ and $A_1A_2=\xi^2-\eta^2$, the equation in \eqref{proof5-a} can be rewritten as
\begin{equation*}
(A_1+A_2)^2 \xi + A_1 A_2 \left[\theta(c-\tilde c) + (\lambda - \tilde \lambda) \frac{2 \xi + \theta c}{\eta}\right] =0.
\end{equation*}
By \eqref{proof5-c} we then have
\begin{equation*}
A_1+A_2=-\sqrt{\frac{(\eta^2 - \xi^2)\big[\theta\eta(c-\tilde c) + (\lambda -  \tilde\lambda) (2\xi + \theta c)\big]}{\eta\xi}} = -2N,
\end{equation*}
with $N>0$ as in \eqref{proof6}. Notice that $A_1+A_2 <0$ is then always satisfied.
\end{proof}
}

\end{document}